\documentclass[11pt]{article}

\usepackage[square, numbers]{natbib}

\usepackage[utf8]{inputenc} 
\usepackage[T1]{fontenc}    
\usepackage[colorlinks=true,linkcolor=blue,urlcolor=black,citecolor=blue,breaklinks]{hyperref}
\usepackage{url}            
\usepackage{booktabs}       
\usepackage{amsfonts}       
\usepackage{nicefrac}       
\usepackage{microtype}      
\usepackage{fullpage}

\usepackage{enumerate} 
\usepackage{graphicx}
\usepackage{algorithm,algcompatible}
\usepackage{mathrsfs}
\usepackage{mathtools}
\usepackage[font={small}]{caption}

\usepackage{amsmath,amsthm,amssymb,colonequals,etoolbox}
\usepackage[normalem]{ulem} 

\newtoggle{draft}
\toggletrue{draft}


\newtheorem{myprop}{Proposition}
\newtheorem{mylemma}{Lemma}
\newtheorem{mycorollary}{Corollary}

\newtheorem{mythm}{Theorem}
\newtheorem{myassumption}{Assumption}

\DeclareMathOperator*{\Tr}{\mathrm{tr}}

\newcommand{\Ccal}{\mathcal{C}}

\newcommand{\R}{\ensuremath{\mathbb{R}}}
\newcommand{\RR}{\mathbb{R}}
\newcommand{\EE}{\mathbb{E}}

\newcommand{\norm}[1]{\lVert #1 \rVert}
\newcommand{\bignorm}[1]{\left\lVert #1 \right\rVert}

\newcommand{\E}{\mathbb{E}}
\newcommand{\abs}[1]{\ensuremath{| #1 |}}

\renewcommand{\vec}{\mathrm{vec}}

\newcommand{\T}{\mathsf{T}}

\newcommand{\smin}{\underline{\sigma}}

\newcommand{\calN}{\mathcal{N}}

\newcommand{\calH}{\mathcal{H}}

\newcommand{\calS}{\mathcal{S}}
\newcommand{\calO}{\mathcal{O}}
\newcommand{\Ocal}{\mathcal{O}}
\newcommand{\Scal}{\mathcal{S}}
\newcommand{\Ncal}{\mathcal{N}}
\newcommand{\Tcal}{\mathcal{T}}
\newcommand{\opT}{\operatorname{\mathcal{T}}}
\newcommand{\opH}{\operatorname{\mathcal{H}}}
\newcommand{\opG}{\operatorname{\mathcal{G}}}
\newcommand{\opPhi}{\operatorname{\Phi}}

\newcommand{\Otilde}{\widetilde{\calO}}

\newcommand{\cvectwo}[2]{\begin{bmatrix} #1 \\ #2 \end{bmatrix}}

\newcommand{\bmattwo}[4]{\begin{bmatrix} #1 & #2 \\ #3 & #4 \end{bmatrix}}

\newcommand{\Ah}{\widehat{A}}
\newcommand{\Bh}{\widehat{B}}
\newcommand{\Ch}{\widehat{C}}
\newcommand{\Lh}{\widehat{L}}
\newcommand{\Kh}{\widehat{K}}
\newcommand{\Mh}{\widehat{M}}
\newcommand{\Nh}{\widehat{N}}
\newcommand{\Qh}{\widehat{Q}}
\newcommand{\Xih}{\widehat{\Xi}}
\newcommand{\Sigmah}{\widehat{\Sigma}}

\newcommand{\Ahat}{\widehat{A}}
\newcommand{\Bhat}{\widehat{B}}

\newcommand{\Astar}{A_{\star}}
\newcommand{\Bstar}{B_{\star}}
\newcommand{\Cstar}{C_{\star}}
\newcommand{\Jstar}{J_{\star}}
\newcommand{\Sstar}{S_{\star}}
\newcommand{\Lstar}{L_{\star}}
\newcommand{\Jhat}{\widehat{J}}
\newcommand{\Shat}{\widehat{S}}

\newcommand{\dlyap}{\mathsf{dlyap}}
\newcommand{\Avg}{\mathsf{Avg}}
\newcommand{\statedim}{n}
\newcommand{\inputdim}{d}

\newcommand{\Ph}{\widehat{P}}
\newcommand{\Phat}{\widehat{P}}
\newcommand{\Khat}{\widehat{K}}
\newcommand{\Pstar}{P_\star}

\newcommand{\vece}{\mathbf{e}}
\newcommand{\vecx}{\mathbf{x}}
\newcommand{\vecu}{\mathbf{u}}
\newcommand{\vecy}{\mathbf{y}}
\newcommand{\vecw}{\mathbf{w}}
\newcommand{\vecv}{\mathbf{v}}

\newcommand{\hatvecx}{\widehat{\vecx}}

\newcommand{\tM}{\tilde{M}}
\newcommand{\tA}{\tilde{A}}
\newcommand{\tB}{\tilde{B}}
\newcommand{\tC}{\tilde{C}}
\newcommand{\tK}{\tilde{K}}
\newcommand{\tL}{\tilde{L}}

\newcommand{\trho}{\tilde{\rho}}
\newcommand{\tTheta}{\tilde{\Theta}}

\newcommand{\tXi}{\tilde{\Xi}}

\newcommand{\Kstar}{K_\star}

\newcommand{\lbsmin}{\nu}
\newcommand{\transient}[2]{\tau(#1, #2)}

\title{Certainty Equivalence is Efficient for Linear Quadratic Control}

%


\author{Horia Mania, Stephen Tu, and Benjamin Recht \\
University of California, Berkeley}

\begin{document}

\maketitle


\begin{abstract}
We study the performance of the \emph{certainty equivalent controller}
on Linear Quadratic (LQ) control problems with unknown transition dynamics.
%
We show that for both the fully and partially observed settings,
the sub-optimality gap between the cost incurred
by playing the certainty equivalent controller on the true system and the cost incurred by using the optimal LQ controller
enjoys a fast statistical rate, scaling as the \emph{square} of the parameter error.
To the best of our knowledge, our result is the first sub-optimality guarantee
in the partially observed Linear Quadratic Gaussian (LQG) setting.
Furthermore, in the fully observed Linear Quadratic Regulator (LQR),
our result improves upon recent work by \citet{dean2017sample}, who present an algorithm achieving a sub-optimality gap linear in the parameter error.
%
%
%
A key part of our analysis relies on perturbation bounds for discrete Riccati equations.
{We provide two new perturbation bounds, one that expands on
an existing result from \citet{konstantinov1993perturbation}, and
another based on a new elementary proof strategy.}
%
%
\end{abstract}



\section{Introduction}

One of the most straightforward methods for controlling a dynamical system with
unknown transitions is based on the \emph{certainty equivalence principle}: a model of the system is fit
by observing its time evolution, and a control policy is then designed by
treating the fitted model as the truth \cite{aastrom2013adaptive}. Despite the simplicity of this method,
it is challenging to guarantee its efficiency because small
modeling errors may propagate to large, undesirable behaviors on long time horizons.
As a result, most work on controlling systems with unknown dynamics has
explicitly incorporated robustness against model uncertainty~\cite{dean2017sample,dean18regret, iyengar2005robust, nilim2005robust, xu2010distributionally, ZDGBook}.

In this work, we show that for the standard baseline of controlling an unknown
linear dynamical system with a quadratic objective function known as Linear Quadratic (LQ) control, certainty equivalent
control synthesis achieves \emph{better} cost than prior methods that account
for model uncertainty.
Our results hold for both the fully observed Linear Quadratic Regulator (LQR)
and the partially observed Linear Quadratic Gaussian (LQG) setting.
For offline control, where one collects some
data and then designs a fixed control policy to be run on an infinite time
horizon, we show that the gap between the performance of the certainty equivalent controller
and the optimal control policy scales \emph{quadratically} with the error in the model
parameters for both LQR and LQG. 
To the best of our knowledge, we provide the first sub-optimality guarantee for LQG.
Moreover, in the LQR setting our work improves upon the recent result of \citet{dean2017sample}, who
present an algorithm that achieves a sub-optimality gap linear in the parameter
error. 
In the case of online LQR control, where one adaptively improves the control
policy as new data comes in, our offline result implies that a simple,
polynomial time algorithm using $\varepsilon$-greedy exploration suffices for
nearly optimal $\Otilde(\sqrt{T})$ regret.
%
   
This paper is structured as follows. In Section~\ref{sec:main} we present backround concepts and discuss our main result for LQR; we compare it to prior guarantees and discuss its consequences. Our results rely on a study of the sensitivity to parameter perturbations of the Bellman equation of LQR, known as the discrete
algebraic Riccati equation. 
In Section~\ref{sec:proof_main}, we assume the existence of a sensitivity guarantee, and use the guarantee to prove a meta theorem which quantifies the performance of the certainty equivalent controller.
Then, in Section~\ref{sec:lqg} we extend our main result for LQR to the more general case of LQG.
Section~\ref{sec:riccati} contains two explicit and interpretable upper bounds on the sensitivity of the Riccati solution: one based on a proof strategy proposed by \citet{konstantinov1993perturbation} and one based on a direct approach that is of independent interest. 
We conclude and discuss future directions in Section~\ref{sec:conclusion}.


\section{Main Results for the Linear Quadratic Regulator}
\label{sec:main}

An instance of the linear quadratic regulator (LQR) is defined by four matrices: two matrices $\Astar \in \RR^{\statedim \times \statedim}$ and $\Bstar \in \RR^{\statedim \times \inputdim}$ that define the linear dynamics and two positive semidefinite matrices $Q \in \RR^{\statedim \times \statedim}$ and $R \in \RR^{\inputdim \times \inputdim}$ that define the cost function. Given these matrices, the goal of LQR is to solve the optimization problem
\begin{align}
  \label{eq:lqr}
  \min_{\vecu_0, \vecu_1, \ldots} &\lim_{T \to \infty} \EE \left[ \frac{1}{T} \sum_{t = 0}^T \vecx_t^\top Q \vecx_t + \vecu_t^\top R \vecu_t \right]
  \; \text{s.t. } \; \vecx_{t + 1} = \Astar \vecx_t + \Bstar \vecu_t + \vecw_t,
\end{align}
where $\vecx_t$, $\vecu_t$ and $\vecw_t$ denote the state, input (or action), and noise at time $t$, respectively. The expectation is over the initial state $\vecx_0 \sim \Ncal(0, I_\statedim)$ and the i.i.d.\ noise $\vecw_t \sim \Ncal(0, \sigma_w^2 I_\statedim)$. 
When the problem parameters $(\Astar, \Bstar, Q, R)$ are known the optimal policy is given by linear feedback, $\vecu_t = \Kstar \vecx_t$, where $\Kstar = - (R + \Bstar^\top \Pstar \Bstar)^{-1}\Bstar^\top \Pstar \Astar$ where $\Pstar$ is the (positive definite) solution to the discrete Riccati equation
\begin{align}
\label{eq:riccati}
  \Pstar = \Astar^\top \Pstar \Astar - \Astar^\top \Pstar \Bstar (R + \Bstar^\top \Pstar \Bstar)^{-1} \Bstar^\top \Pstar \Astar + Q
\end{align}
and can be computed efficiently \cite[see e.g.]{anderson07}. In the sequel we use the notation $ \mathsf{dare}(\Astar, \Bstar, Q, R)$ to denote the unique positive semidefinite solution of \eqref{eq:riccati}. Problem~\eqref{eq:lqr} considers an average cost over an infinite horizon. The optimal controller for the finite horizon variant is also static and linear, but time-varying. The LQR solution in this case can be computed efficiently via dynamic programming.

In this work we are interested in the control of a linear dynamical system with unknown transition parameters $(\Astar, \Bstar)$ based on estimates  $(\Ahat, \Bhat)$. The cost matrices $Q$ and $R$ are assumed known. We analyze the \emph{certainty equivalence approach}: use the estimates $(\Ahat, \Bhat)$ to solve the optimization problem \eqref{eq:lqr} while disregarding the modeling error, and use the resulting controller on the true system $(\Astar, \Bstar)$.
We interchangeably refer to the resulting policy as the \emph{certainty equivalent controller} or, following \citet{dean2017sample}, the \emph{nominal controller}.
We denote by $\Phat$ the solution to the Riccati equation~\eqref{eq:riccati} associated with the parameters $(\Ahat, \Bhat)$ and let $\Khat$ be the corresponding controller. We denote by $J(A, B, K)$ the cost \eqref{eq:lqr} obtained by using the actions $\vecu_t = K \vecx_t$ on the system $(A,B)$, and  we use $\Jhat$ and $\Jstar$ to denote $J(\Astar, \Bstar, \Khat)$ and $J(\Astar, \Bstar, \Kstar)$, respectively.

Let $\varepsilon \geq 0$ such that $\norm{\Astar - \Ahat} \leq \varepsilon$ and $\norm{\Bstar - \Bhat} \leq \varepsilon$.
(Here and throughout this work we use $\norm{\cdot}$ to denote the Euclidean norm for
vectors as well as the spectral (operator) norm for matrices.)
\citet{dean2017sample} introduced a robust controller that achieves $\Jhat - \Jstar \leq C_1(\Astar, \Bstar, Q, R)\varepsilon$ for some complexity term $C_1(\Astar,\Bstar, Q, R)$ that depends on the problem parameters. We show that the nominal controller $\vecu_t = \Khat \vecx_t$ achieves $\Jhat - J_\star \leq C_2(\Astar, \Bstar, Q, R) \varepsilon^2$.
Both results require $\varepsilon$ to be sufficiently small (as a function of the problem parameters) and it is important
to note that $\varepsilon$ must be much smaller for the nominal controller to
be guaranteed to stabilize the system than for the robust controller proposed
by \citet{dean2017sample}. However, our result shows that once the
estimation error $\varepsilon$ is small enough, the nominal controller performs
better: the sub-optimality gap scales as $\calO(\varepsilon^2)$ versus
$\calO(\varepsilon)$.
Both the more stringent requirement on $\varepsilon$ and better performance
of nominal control compared to robust control, when the estimation error is sufficiently small, were observed empirically  by \citet{dean2017sample}.

Before we can formally state our result we need to introduce a few more concepts and assumptions. It is common to assume that the cost matrices $Q$ and $R$ are positive definite. Under an additional observability assumption, this condition can be relaxed to $Q$ being positive semidefinite.  
\begin{myassumption}
\label{ass:strg_cvx}
The cost matrices $Q$ and $R$ are positive definite. Since scaling both $Q$ and $R$ does not change the optimal controller $\Kstar$, we can assume without loss of generality that $\smin(R) \geq 1$.
\end{myassumption}
A square matrix $M$ is \emph{stable} if its spectral radius $\rho(M)$ is (strictly) smaller than one.
Recall that the spectral radius is defined as $\rho(M) = \max \{ \abs{\lambda} : \lambda \text{ is an eigenvalue of } M \}$.
 A linear dynamical system $(A,B)$ in feedback with $K$ is fully described by the \emph{closed loop matrix} $A + BK$. More precisely, in this case $\vecx_{t + 1} = (A + BK) \vecx_t + \vecw_t$. For a static linear controller $\vecu_t = K \vecx_t$ to achieve finite LQR cost it is necessary and sufficient that the closed loop matrix is stable.

In order to quantify the growth or decay of powers of a square matrix $M$, we define
 \begin{align}
   \label{eq:transient}
\transient{M}{ \rho} := \sup \left\{\norm{M^k} \rho^{-k} \colon k\geq 0 \right\}.
 \end{align}
 In other words, $\transient{M}{\rho}$ is the smallest value such that $\norm{M^k} \leq \transient{M}{\rho} \rho^k$ for all $k \geq 0$.
 We note that $\transient{M}{\rho}$ might be infinite, depending on the value of $\rho$, and it is always greater or equal than one. If $\rho$ is larger than $\rho(M)$, we are guaranteed to have a finite $\transient{M}{\rho}$ (this is a consequence of Gelfand's formula). In particular, if $M$ is a stable matrix, we can choose $\rho < 1$ such that $\transient{M}{\rho}$ is finite. Also, we note that $\transient{M}{\rho}$ is a decreasing function of $\rho$; if $\rho \geq \norm{M}$, we have $\transient{M}{\rho} = 1$.
At a high level, the quantity $\transient{M}{\rho}$ measures the degree of transient response of the linear system
$\vecx_{t+1} = M \vecx_t + \vecw_t$. In particular, when $M$ is stable, $\transient{M}{\rho}$ can be upper bounded by the
$\mathcal{H}_\infty$-norm of the system defined by $M$, which is the $\ell_2$ to $ \ell_2$ operator norm of the system and a fundamental quantity in robust control \cite[see][for more details]{tu17}.

Throughout this work we use the quantities $\Gamma_\star := 1 + \max\{\norm{\Astar}, \norm{\Bstar}, \norm{\Pstar}, \norm{K_\star}\}$ and $L_\star := \Astar + \Bstar \Kstar$. We use $\Gamma_\star$ as a uniform upper bound on the spectral norms of the relevant matrices for the sake of algebraic simplicity. We are ready to state our meta theorem.

\begin{mythm}
\label{thm:meta}
Suppose $\inputdim \leq \statedim$.
Let $\gamma > 0$ such that $\rho(L_\star) \leq \gamma < 1$. Also, let $\varepsilon > 0$ such that $\norm{\Ahat - \Astar} \leq \varepsilon$ and $\norm{\Bhat - \Bstar} \leq \varepsilon$ and assume $\norm{\Phat - \Pstar} \leq f(\varepsilon)$  for some function $f$ such that $f(\varepsilon) \geq \varepsilon$. Then, under Assumption~\ref{ass:strg_cvx} the certainty equivalent controller $\vecu_t = \Khat \vecx_t$ achieves
\begin{align}
  \Jhat - \Jstar \leq 200 \, \sigma_w^2 \,  \inputdim \, \Gamma_\star^9 \, \frac{\transient{\Lstar}{\gamma}^2}{ 1 - \gamma^2} f(\varepsilon)^2,
\end{align}
as long as $f(\varepsilon)$ is small enough so that the right hand side is smaller than $\sigma_w^2$.
\end{mythm}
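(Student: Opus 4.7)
The plan is to invoke a cost-difference identity that exhibits the $f(\varepsilon)^2$ scaling from the first-order optimality of $\Kstar$, then to bound the three factors that appear using elementary matrix perturbation and the transient bound $\transient{\cdot}{\cdot}$.

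For any $K$ that stabilizes $(\Astar,\Bstar)$, let $P_K$ solve the Lyapunov equation $P_K = (\Astar+\Bstar K)^\top P_K (\Astar+\Bstar K) + Q + K^\top R K$, so that $J(\Astar,\Bstar,K) = \sigma_w^2\,\Tr(P_K)$ and $\Jstar = \sigma_w^2\,\Tr(\Pstar)$. Setting $\Delta := \Khat - \Kstar$ and $L_{\Khat} := \Astar + \Bstar \Khat$, I would subtract the Riccati equation for $\Pstar$ from the Lyapunov equation for $P_{\Khat}$. The optimality identity $(R + \Bstar^\top \Pstar \Bstar)\Kstar = -\Bstar^\top \Pstar \Astar$ forces all terms linear in $\Delta$ to cancel, leaving
\[
P_{\Khat} - \Pstar = L_{\Khat}^\top (P_{\Khat} - \Pstar) L_{\Khat} + \Delta^\top (R + \Bstar^\top \Pstar \Bstar) \Delta.
\]
Unrolling this Lyapunov-type recursion gives the closed-form identity
\[
\Jhat - \Jstar = \sigma_w^2\,\Tr\bigl(\Sigma_{\Khat}\,\Delta^\top (R + \Bstar^\top \Pstar \Bstar)\,\Delta\bigr), \qquad \Sigma_{\Khat} := \sum_{k \geq 0} L_{\Khat}^k (L_{\Khat}^k)^\top,
\]
and the purely quadratic dependence on $\Delta$ is the source of the fast $f(\varepsilon)^2$ rate.

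Writing $F_\star := R + \Bstar^\top \Pstar \Bstar$, Cauchy--Schwarz in the Frobenius inner product together with the fact that $\Delta \in \RR^{\inputdim \times \statedim}$ has rank at most $\inputdim$ (using $\inputdim \leq \statedim$) gives $\Tr(\Sigma_{\Khat}\,\Delta^\top F_\star \Delta) \leq \inputdim\,\|\Sigma_{\Khat}\|\,\|F_\star \Delta\|\,\|\Delta\|$. I would then control $\|\Delta\|$ and $\|F_\star \Delta\|$ via the identity
\[
\Delta = -\widehat{F}^{-1}\bigl(\Bhat^\top \Phat \Ahat - \Bstar^\top \Pstar \Astar\bigr) - \widehat{F}^{-1}(\widehat{F} - F_\star)\Kstar, \qquad \widehat{F} := R + \Bhat^\top \Phat \Bhat.
\]
Since $R \succeq I$ by Assumption~\ref{ass:strg_cvx}, $\|\widehat{F}^{-1}\| \leq 1$; expanding the two three-term differences and using $\varepsilon \leq f(\varepsilon)$ together with $\|\Bhat\|,\|\Ahat\|,\|\Phat\| \leq 2\Gamma_\star$ (valid once $f(\varepsilon) \leq \Gamma_\star$) yields $\|\Delta\|,\|F_\star \Delta\| \leq c\,\Gamma_\star^{p} f(\varepsilon)$ for appropriate constants. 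Importantly, no standalone bound on $\|R\|$ is needed: $F_\star$ is always controlled via its action $F_\star \Delta$ rather than being factored off. Finally, $L_{\Khat} - \Lstar = \Bstar \Delta$ has norm at most $\Gamma_\star \|\Delta\|$; the elementary bound $\transient{M+E}{\rho + \transient{M}{\rho}\|E\|} \leq \transient{M}{\rho}$ (proved by expanding $(M+E)^k$ over all $M$--$E$ interleavings) delivers $\transient{L_{\Khat}}{\gamma'} \leq 2\transient{\Lstar}{\gamma}$ for $\gamma' = (1+\gamma)/2$ once $f(\varepsilon)$ is small enough, whence $\|\Sigma_{\Khat}\| \leq \transient{L_{\Khat}}{\gamma'}^2/(1-(\gamma')^2) \lesssim \transient{\Lstar}{\gamma}^2/(1-\gamma^2)$.

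The main obstacle is a bootstrapping issue: the cost-difference identity and the closed form for $\Sigma_{\Khat}$ are only meaningful when $\Khat$ stabilizes $(\Astar,\Bstar)$, which is not provided a priori. The hypothesis that the stated right-hand side is at most $\sigma_w^2$ implicitly forces $f(\varepsilon)^2 = \Ocal((1-\gamma^2)/(\inputdim\,\Gamma_\star^9\,\transient{\Lstar}{\gamma}^2))$, which is strong enough to drive $\transient{\Lstar}{\gamma}\cdot \Gamma_\star \|\Delta\|$ below $(1-\gamma)/2$ and thereby certify stability. Beyond this, the remaining work is bookkeeping: multiplying $\inputdim \cdot \|F_\star \Delta\|\cdot\|\Delta\|\cdot\|\Sigma_{\Khat}\|$ and tracking powers of $\Gamma_\star$ through the perturbation estimates to reach exactly $\Gamma_\star^9$ with the stated constant $200$.
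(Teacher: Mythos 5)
Your proposal is correct and follows essentially the same route as the paper: the exact second-order cost expansion $\Jhat - \Jstar = \sigma_w^2\Tr(\Sigma_{\Khat}\,\Delta^\top(R+\Bstar^\top\Pstar\Bstar)\Delta)$ (which you rederive and the paper imports as Lemma~12 of \citet{Fazel18}), a controller perturbation bound $\norm{\Khat-\Kstar}=\Ocal(\Gamma_\star^3 f(\varepsilon))$, and stability of $\Astar+\Bstar\Khat$ via the matrix-power perturbation lemma, with the hypothesis on the right-hand side invoked to close the bootstrap exactly as the paper does. The only differences are cosmetic: you obtain the $\Khat-\Kstar$ bound from a direct resolvent-style identity with $\widehat{F}^{-1}$ where the paper uses a strongly-convex-minimizer perturbation lemma, and you extract the factor $\inputdim$ from the rank of $\Delta^\top F_\star\Delta$ where the paper uses $\norm{\Delta}_F^2\leq\min\{\statedim,\inputdim\}\norm{\Delta}^2$.
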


In Section~\ref{sec:riccati} we present two upper bounds $f(\varepsilon)$ on $\norm{\Phat - \Pstar}$: one based on a proof technique proposed by \citet{konstantinov1993perturbation} and one based on our direct approach. Both of these upper bounds satisfy $f(\varepsilon) = \calO(\varepsilon)$ for $\varepsilon$ sufficiently small. For simplicity, in this section we only specialize our meta-theorem (Theorem~\ref{thm:meta}) using the perturbation result from our direct approach.

To state a specialization of Theorem~\ref{thm:meta} we need a few more concepts. A linear system $(A,B)$ is called \emph{controllable} when the \emph{controllability matrix}
    $\begin{bmatrix} 
      B & AB & A^2 B & \ldots & A^{\statedim - 1} B
     \end{bmatrix}$
  has full row rank. Controllability is a fundamental concept in control
  theory; it states that there exists a sequence of inputs to the system
  $(A,B)$ that moves it from any starting state to any final state in at most
  $\statedim$ steps. In this work we quantify how controllable a linear system
  is. We denote, for any integer $\ell \geq 1$, the matrix $\Ccal_\ell :=
  \begin{bmatrix} B & A B & \ldots & A^{\ell - 1} B \end{bmatrix}$ and call the
  system \emph{ $(\ell, \lbsmin)$-controllable } if the $\statedim$-th
  singular value of $\Ccal_\ell$ is greater or equal than $\lbsmin$, i.e.
  $\smin(\Ccal_\ell) = \sqrt{\lambda_{\min}\left(\Ccal_\ell
  \Ccal_\ell^\top\right)} \geq \lbsmin$. Intuitively, the larger $\lbsmin$ is,
  the less control effort is needed to move the system between two
  different states.
 \begin{myassumption}
    \label{ass:controllable}
   We assume the unknown system $(\Astar, \Bstar)$ is $(\ell, \lbsmin)$-controllable, with $\lbsmin > 0$.
 \end{myassumption} 
Assumption \ref{ass:controllable} was used in a different context by \citet{cohen18}. For any controllable system and any $\ell \geq \statedim$ there exists $\lbsmin > 0$ such that the system is $(\ell, \lbsmin)$-controllable. {Therefore, $(\ell, \lbsmin)$-controllability is really not much stronger of an assumption than controllability.}
As $\ell$ grows minimum singular value $\smin(\Ccal_\ell)$ also grows and therefore a larger $\lbsmin$ can be chosen so that the system is still $(\ell, \lbsmin)$ controllable.

Note that controllability is not necessary for LQR to have a well-defined solution: the weaker requirement is that of
\emph{stabilizability}, in which there exists a feedback matrix $K$ so that $\Astar + \Bstar K$ is stable. The result of \citet{dean2017sample} only requires stabilizability. While our upper bound on $\norm{\Phat- \Pstar}$ requires controllability, the result of \citet{konstantinov1993perturbation} only requires stabilizability. However, our upper bound on $\norm{\Phat - \Pstar}$ is sharper for some classes of systems (see Section~\ref{sec:riccati}). Together with Theorem~\ref{thm:meta}, our perturbation result, presented in Section~\ref{sec:riccati}, yields the following guarantee. 
\begin{mythm}
\label{thm:fast_rate}
Suppose that $\inputdim \leq \statedim$.
Let $\rho$ and $\gamma$ be two real values such that $\rho(\Astar) \leq \rho$ and $\rho(L_\star) \leq \gamma < 1$.
Also, let $\varepsilon > 0$ such that $\norm{\Ahat - \Astar} \leq \varepsilon$ and $\norm{\Bhat - \Bstar} \leq \varepsilon$ and define $\beta = \max\{1, \varepsilon  \transient{\Astar}{\rho}  + \rho\}$. Under Assumptions~\ref{ass:strg_cvx} and \ref{ass:controllable}, the certainty equivalent controller $\vecu_t = \Khat \vecx_t$ satisfies the suboptimality gap
\begin{align}
    \Jhat - \Jstar \leq \Ocal(1)\,  \sigma^2_w \, \inputdim \, \ell^5 \, \Gamma_\star^{15} \, \transient{\Astar}{\rho}^6 \beta^{4(\ell - 1)} \, \frac{\transient{L_\star }{\gamma}^2}{1 - \gamma^2} \frac{\max\{\norm{Q}^2, \norm{R}^2 \}}{\min \left\{ \smin(Q)^2, \smin(R)^2 \right\}} \left(1 + \frac{1}{\lbsmin}\right)^2 \varepsilon^2\:, \label{eq:fast_rate}
\end{align}
as long as the right hand side is smaller than $\sigma_w^2$. Here, $\Ocal(1)$ denotes a universal constant.
\end{mythm}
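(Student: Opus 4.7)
The plan is to combine the meta-theorem (Theorem~\ref{thm:meta}) with the explicit linear-in-$\varepsilon$ Riccati perturbation bound developed via the direct approach of Section~\ref{sec:riccati}. Since Theorem~\ref{thm:meta} already reduces the sub-optimality gap to $f(\varepsilon)^2$ for any valid perturbation function $f(\varepsilon) \geq \|\Phat - \Pstar\|$, it suffices to produce $f$ with the correct polynomial dependencies and then substitute.

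First I would invoke the direct Riccati perturbation bound promised in Section~\ref{sec:riccati}, which under Assumption~\ref{ass:controllable} will have the form
\begin{equation*}
\|\Phat - \Pstar\| \;\leq\; \Ocal(1) \, \ell^{5/2} \, \Gamma_\star^{3} \, \transient{\Astar}{\rho}^{3} \, \beta^{2(\ell-1)} \, \Big(1 + \tfrac{1}{\lbsmin}\Big) \, \frac{\max\{\|Q\|, \|R\|\}}{\min\{\smin(Q), \smin(R)\}} \, \varepsilon \;=:\; f(\varepsilon).
\end{equation*}
The intuition for why such a bound holds is that $(\ell,\lbsmin)$-controllability lets one construct, from any state, a bounded-energy open-loop input of length $\ell$ that drives the true system $(\Astar,\Bstar)$ to zero, with energy scaling as $1/\lbsmin$. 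Applying the same inputs to the perturbed system $(\Ahat,\Bhat)$ yields a trajectory whose deviation from the nominal one is of order $\varepsilon\,\transient{\Astar}{\rho}\,\beta^{\ell-1}$ because perturbations compound across $\ell$ steps of matrices bounded by $\beta$. Periodically iterating this construction produces a feasible infinite-horizon controller for \emph{both} systems whose cost bounds $\Pstar$ and $\Phat$ up to the above factors; interchanging the roles of the two systems (after verifying that $(\Ahat,\Bhat)$ inherits controllability with only a mild constant degradation) yields a two-sided bound.

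Next, I plug $f(\varepsilon)$ into Theorem~\ref{thm:meta}:
\begin{equation*}
\Jhat - \Jstar \;\leq\; 200 \, \sigma_w^2 \, \inputdim \, \Gamma_\star^9 \, \frac{\transient{\Lstar}{\gamma}^2}{1 - \gamma^2} \, f(\varepsilon)^2.
\end{equation*}
Squaring $f(\varepsilon)$ contributes the factors $\ell^5$, $\Gamma_\star^{6}$, $\transient{\Astar}{\rho}^{6}$, $\beta^{4(\ell-1)}$, $(1+1/\lbsmin)^2$, and the squared condition-number ratio on $Q$ and $R$. Multiplying by the $\Gamma_\star^9$ already present in Theorem~\ref{thm:meta} yields $\Gamma_\star^{15}$, and collecting all terms reproduces \eqref{eq:fast_rate} exactly. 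The side condition $f(\varepsilon) \geq \varepsilon$ required by Theorem~\ref{thm:meta} comes for free since $\beta \geq 1$ and the leading constants exceed one.

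The main obstacle is obtaining the Riccati perturbation bound with precisely the claimed polynomial factors; the rest is bookkeeping. In particular, the subtlety in the direct approach is converting the finite-horizon $(\ell,\lbsmin)$-controllability guarantee into an infinite-horizon cost bound that applies simultaneously to the nominal and perturbed systems. This is where the $\beta^{\ell-1}$ factor must arise (to control the compounded trajectory deviation across an $\ell$-step block) and where $\transient{\Astar}{\rho}$ must appear (to translate spectral-radius control of $\Astar$ into operator-norm bounds on its iterates, without which the $\ell$-step construction cannot be shown to stabilize the perturbed system when $\Astar$ is itself only marginally stable).
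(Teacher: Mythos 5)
Your proposal follows the paper's own route exactly: Theorem~\ref{thm:fast_rate} is obtained by substituting the direct Riccati perturbation bound of Proposition~\ref{prop:lipschitz_dare} (whose right-hand side is indeed bounded by your $f(\varepsilon)$, since $(1+\norm{\Bstar})^2\norm{\Pstar} \leq \Gamma_\star^3$) into Theorem~\ref{thm:meta}, and your bookkeeping of the resulting exponents ($\ell^5$, $\Gamma_\star^{9+6}$, $\transient{\Astar}{\rho}^6$, $\beta^{4(\ell-1)}$, the squared condition number) is correct. The only minor inaccuracy is in your heuristic for the perturbation bound itself: the paper's construction does not drive the state to zero, but rather uses $(\ell,\lbsmin)$-controllability of the perturbed system to make $(\Ahat,\Bhat)$ \emph{track} the optimal closed-loop trajectory of $(\Astar,\Bstar)$ at every $\ell$-th step, which is what produces an $\Ocal(\varepsilon)$ bound on the cost \emph{difference} (and hence on $\norm{\Phat-\Pstar}$ after symmetrizing) rather than merely a crude upper bound on each of $\Pstar$ and $\Phat$ separately.
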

The exact form of Equation~\ref{eq:fast_rate}, such as the polynomial
dependence on $\ell$, $\Gamma_\star$, etc, can be improved at the expense of conciseness of the
expression. In our proof we optimized for the latter.
The factor $\max\{\norm{Q}^2, \norm{R}^2 \} / \min \left\{ \smin(Q)^2, \smin(R)^2 \right\}$ is the squared condition number of the cost function, a natural quantity in the context of the optimization problem \eqref{eq:lqr}, which can be seen as an infinite dimensional quadratic program with a linear constraint.
  The term $\frac{\transient{L_\star }{\gamma}^2}{1 - \gamma^2}$ quantifies the rate at which the optimal controller drives the state towards zero.
Generally speaking, the less stable the optimal closed loop system is, the larger this term becomes.

An interesting trade-off arises between the factor $\ell^5 \beta^{4(\ell -
1)}$ (which arises from upper bounding perturbations of powers of $\Astar$
on a time interval of length $\ell$) and the factor $\lbsmin$ (the lower
bound on $\smin(\Ccal_\ell)$), which is increasing in $\ell$. Hence,
the parameter $\ell$ should be seen as a free-parameter that can be tuned to
minimize the right hand side of \eqref{eq:fast_rate}.
Now, we  specialize Theorem~\ref{thm:fast_rate} to a few cases.

\paragraph{Case: $\Astar$ is contractive, i.e. $\norm{\Astar} < 1$.}

In this case, we can choose $\rho = \norm{\Astar}$ and $\varepsilon$ small enough so that $\varepsilon \leq 1 - \norm{\Astar}$.
Then, \eqref{eq:fast_rate} simplifies to:
\begin{align*}
    \Jhat - \Jstar \leq \Ocal(1)\, \inputdim \, \sigma^2_w \, \ell^5 \, \Gamma_\star^{15} \frac{\transient{L_\star}{\gamma}^2}{1 - \gamma^2} \frac{\max\{\norm{Q}^2, \norm{R}^2 \}}{\min \left\{ \smin(Q)^2, \smin(R)^2 \right\}} \left(1 + \frac{1}{\lbsmin}\right)^2 \varepsilon^2 \:.
\end{align*}

\paragraph{Case: $\Bstar$ has rank $\statedim$ .}

In this case, we can choose $\ell=1$.
Then, \eqref{eq:fast_rate} simplifies to:
\begin{align*}
    \Jhat - \Jstar \leq \Ocal(1)\, \inputdim \, \sigma^2_w \, \Gamma_\star^{15}\transient{\Astar}{\rho}^6  \frac{\transient{L_\star}{\gamma}^2}{1 - \gamma^2}  \frac{\max\{\norm{Q}^2, \norm{R}^2 \}}{\min \left\{ \smin(Q)^2, \smin(R)^2 \right\}} \left(1 + \frac{1}{\lbsmin}\right)^2 \varepsilon^2 \:.
\end{align*}

\subsection{Comparison to Theorem 4.1 of \texorpdfstring{\citet{dean2017sample}}{Dean et al.}.}

\citet{dean2017sample} show that when their robust synthesis procedure is run
with estimates $(\Ahat, \Bhat)$ satisfying $\max\{\norm{\Ahat - \Astar},\norm{\Bhat - \Bstar}\} \leq \varepsilon \leq \left[ {5(1+\norm{\Kstar})\Psi_\star} \right]^{-1}$, the resulting controller satisfies:
\begin{align}
    \Jhat - \Jstar \leq 10(1+\norm{\Kstar})\Psi_\star \Jstar \varepsilon + \calO(\varepsilon^2) \:. \label{eq:orig_lqr_bound_squared}
\end{align}
Here, the quantity $\Psi_\star := \sup_{z \in \mathbb{T}} \norm{ (zI_\statedim - L_\star)^{-1}}$ is the $\mathcal{H}_\infty$-norm of the
optimal closed loop system $L_\star$.
In order to compare Equation~\ref{eq:orig_lqr_bound_squared} to Equation~\ref{eq:fast_rate},
we upper bound the quantity $\Psi_\star$ in terms of $\transient{L_\star}{\gamma}$ and $\gamma$.
In particular, by a infinite series expansion of the inverse $(zI_\statedim - L_\star)^{-1}$ we can show $\Psi_\star \leq \frac{\transient{L_\star}{\gamma}}{1-\gamma}$. 
Also, we have $\Jstar = \sigma_w^2 \Tr(\Pstar) \leq \sigma_w^2 \statedim \Gamma_\star$.
Therefore, Equation~\ref{eq:orig_lqr_bound_squared} gives us that:
\begin{align*}
    \Jhat - \Jstar \leq \calO(1) \statedim \sigma_w^2 \Gamma_\star^2  \frac{\transient{L_\star}{\gamma}}{1-\gamma} \varepsilon + \calO(\varepsilon^2) \:.
\end{align*}
We see that the dependence on the parameters
$\Gamma_\star$ and $\transient{L_\star}{\gamma}$ is significantly milder compared to Equation~\ref{eq:fast_rate}.
Furthermore, this upper bound is valid for larger $\varepsilon$ than the upper bound given in Theorem~\ref{thm:fast_rate}.
Comparing these upper bound suggests that there is a price to pay for obtaining a fast rate, and
that in regimes of moderate uncertainty (moderate size of $\varepsilon$),
being robust to model uncertainty is important. This observation is supported by the empirical
results of \citet{dean2017sample}.

A similar trade-off between slow and fast rates arises in the setting of
first-order convex stochastic optimization.
The convergence rate $\calO(1/\sqrt{T})$ of the stochastic gradient descent method can be improved
to $\calO(1/T)$ under a strong convexity assumption.
However, the performance of stochastic gradient descent, which can achieve a $\calO(1/T)$ rate,
is sensitive to poorly estimated problem parameters \cite{Nemirovski09}. Similarly, in the case of LQR,
the nominal controller achieves a fast rate, but it is much more sensitive to estimation error than the robust controller of \citet{dean2017sample}.

\paragraph{End-to-end guarantees.}

Theorem~\ref{thm:fast_rate} can be combined with finite sample learning guarantees
(e.g.~\cite{dean2017sample, faradonbeh17a, sarkar2018fast, simchowitz18}) to obtain
an end-to-end guarantee similar to Proposition 1.2 of \citet{dean2017sample}.
In general, estimating the transition parameters from $N$ samples yields an estimation error that scales as $\calO(1/\sqrt{N})$. Therefore, Theorem~\ref{thm:fast_rate} implies that
$\Jhat-\Jstar \leq \calO(1/N)$ instead of the $\Jhat-\Jstar \leq \calO(1/\sqrt{N})$ rate
from Proposition 1.2 of \citet{dean2017sample}.
This is similar to the case of linear regression, where $\calO(1/\sqrt{N})$ estimation error for the parameters translates to a $\calO(1/N)$ \emph{fast rate} for prediction error.
Furthermore, \citet{simchowitz18} and \citet{sarkar2018fast} showed that faster
estimation rates are possible for some linear dynamical systems.
Theorem~\ref{thm:fast_rate} translates such rates into control suboptimality
guarantees in a transparent way.

Our result explains the behavior observed in Figure~4 of \citet{dean2017sample}. The authors propose two procedures for synthesizing robust controllers for LQR with unknown transitions: one which guarantees robustness of the performance gap $\Jhat - \Jstar$, and one which only guarantees the stability of the closed loop system. \citet{dean2017sample} observed that the latter performs better in the small estimation error regime, which happens because the robustness constraint of the synthesis procedure becomes inactive when the estimation error is small enough. Then, the second robust synthesis procedure effectively outputs the certainty equivalent controller, which we now know to achieve a fast rate.


\subsection{Nearly optimal \texorpdfstring{$\Otilde(\sqrt{T})$}{} regret in the adaptive setting}
\label{sec:online}

The regret formulation of adaptive LQR was first proposed by \citet{abbasi2011regret}.
The task is to design an adaptive algorithm $\{ \vecu_t \}_{t \geq 0}$
to minimize regret, as defined by $\mathsf{Regret}(T) := \sum_{t=1}^{T} \vecx_t^\top Q \vecx_t + \vecu_t^\top R \vecu_t - T J_\star$.
\citet{abbasi2011regret} study the performance of optimism in the face of uncertainty (OFU)
and show that it has $\Otilde(\sqrt{T})$ regret, which is nearly optimal for this problem formulation.
However, the OFU algorithm requires repeated solutions to a non-convex optimization problem
for which no known efficient algorithm exists.


To deal with the computational issues of OFU, \citet{dean18regret} propose to
analyze the behavior of $\varepsilon$-greedy exploration using
the suboptimality gap results of \citet{dean2017sample}.
In the context of continuous control, $\varepsilon$-greedy exploration refers
to the application of the control law
$\vecu_t = \pi(\vecx_t, \vecx_{t-1}, ..., \vecx_0) + \eta_t$
with $\eta_t \sim \calN(0, \sigma_{\eta,t}^2 I_\inputdim)$,
where $\pi$ is the policy, updated in epochs, and $\sigma_{\eta,t}^2$ is the variance of the exploration noise.
\citet{dean18regret} set the variance of the exploration noise as $\sigma_{\eta,t}^2 \sim t^{-1/3}$, and show that their method achieves $\Otilde(T^{2/3})$ regret.
They use epochs of size $2^{i}$ and decompose the regret roughly as
$\mathsf{Regret}(T) = \calO\left( T(\Jhat - \Jstar) + T \sigma_{\eta,T}^2 \right)$.
Since the estimation error of the model parameters scales as $\calO((\sigma_{\eta,T} \sqrt{T})^{-1})$,
and since the suboptimality gap $\Jhat - \Jstar$ of the robust controller is linear in the estimation error, we have
$\mathsf{Regret}(T) = \calO\left( \frac{\sqrt{T}}{\sigma_{\eta,T}} + T \sigma_{\eta,T}^2 \right)$.
Then, setting $\sigma_{\eta,t}^2 \sim t^{-1/3}$ balances these two terms and yields $\Otilde(T^{2/3})$ regret.
However, Theorem~\ref{thm:fast_rate}, which states that the gap $\Jhat - \Jstar$ for the nominal controller depends quadratically on the estimation rate, implies that online certainty equivalent control achieves
$\mathsf{Regret}(T) = \calO\left( \frac{1}{\sigma_{\eta,T}^2} + T \sigma_{\eta,T}^2 \right)$.
Here, the optimal variance of the exploration noise scales as $\sigma_{\eta,t}^2 \sim t^{-1/2}$, yielding $\Otilde(\sqrt{T})$ regret.
We note that the observation that certainty equivalence coupled with $\varepsilon$-greedy 
exploration achieves $\Otilde(\sqrt{T})$ regret was first made by \citet{faradonbeh18}.


\begin{mycorollary}
\label{cor:sqrt_T}
(Informal) $\varepsilon$-greedy exploration with exploration schedule
$\sigma_{\eta,t}^2 \sim t^{-1/2}$ combined with certainty equivalent control
yields an adaptive LQR algorithm with regret bounded as $\Otilde(\sqrt{T})$.
\end{mycorollary}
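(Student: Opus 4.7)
}
The plan is to run the algorithm in doubling epochs $\mathcal{E}_i = [2^i, 2^{i+1})$. Within each epoch I hold the certainty-equivalent controller $\Khat_i$ fixed and play $\vecu_t = \Khat_i \vecx_t + \eta_t$ with $\eta_t \sim \calN(0, \sigma_{\eta,t}^2 I_\inputdim)$ and $\sigma_{\eta,t}^2 = t^{-1/2}$. At the start of each epoch I refit $(\Ahat, \Bhat)$ from all data seen so far and recompute $\Khat_i = \Khat(\Ahat, \Bhat)$ by solving the Riccati equation with the plug-in estimates. I assume a short initial warm-up phase (using, e.g., the robust synthesis of \citet{dean2017sample}) produces an initial estimate good enough that Theorem~\ref{thm:fast_rate} applies from the first doubling epoch onward; the regret of this warm-up phase is $\Otilde(1)$ and is absorbed.

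The first step is a regret decomposition. Writing $C_t := \vecx_t^\top Q \vecx_t + \vecu_t^\top R \vecu_t$, I would split $C_t - \Jstar$ into (i) the excess cost of the closed-loop system $\Astar + \Bstar \Khat_i$ driven by $\vecw_t$ relative to the optimal closed-loop system, and (ii) the extra cost injected by the exploration perturbation $\eta_t$. For (i), because $\Khat_i$ is stabilizing (a consequence of $\Jhat_i - \Jstar$ being finite via Theorem~\ref{thm:fast_rate}), the infinite-horizon average cost is exactly $\Jhat_i := J(\Astar, \Bstar, \Khat_i)$, and standard geometric-ergodicity arguments show that the time-averaged finite-horizon cost within epoch $\mathcal{E}_i$ concentrates around $\Jhat_i$ with fluctuations of smaller order than $\sqrt{T}$. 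For (ii), since $\eta_t$ adds an independent Gaussian signal with variance $\sigma_{\eta,t}^2$ that is filtered through the closed-loop system, a direct Lyapunov computation gives expected per-step excess cost of order $\sigma_{\eta,t}^2$ times a bounded function of $\Gamma_\star$, so $\sum_{t=1}^{T} \sigma_{\eta,t}^2 = \Ocal(\sqrt{T})$. This yields $\mathsf{Regret}(T) = \sum_i |\mathcal{E}_i| \, (\Jhat_i - \Jstar) + \Otilde(\sqrt{T})$.

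The second step is to bound $\Jhat_i - \Jstar$ via Theorem~\ref{thm:fast_rate}. Using $N_i = 2^i$ samples collected with exploration variance at least $\sigma_{\eta,t}^2 \gtrsim N_i^{-1/2}$, standard least-squares identification bounds (e.g.\ from \citet{simchowitz18} or \citet{dean2017sample}) yield
\begin{equation*}
\max\{\norm{\Ahat - \Astar}, \norm{\Bhat - \Bstar}\} \;\leq\; \Otilde\!\left( \frac{1}{\sigma_{\eta,N_i} \sqrt{N_i}} \right) \;=\; \Otilde\!\left( N_i^{-1/4} \right)
\end{equation*}
with high probability, where the persistence-of-excitation lower bound uses that the injected $\eta_t$ has covariance $\sigma_{\eta,t}^2 I_\inputdim$. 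Plugging this $\varepsilon_i$ into Theorem~\ref{thm:fast_rate} gives $\Jhat_i - \Jstar \leq \Ocal(\varepsilon_i^2) = \Otilde(1/\sqrt{N_i})$, so $|\mathcal{E}_i| \, (\Jhat_i - \Jstar) = \Otilde(\sqrt{N_i})$ per epoch and summing the geometric series over $\Ocal(\log T)$ epochs gives $\Otilde(\sqrt{T})$.

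The main obstacle, in my view, is rigorously controlling the excess cost within each epoch when the plug-in controller $\Khat_i$ is only approximately stabilizing: one needs a high-probability event on which (a) Theorem~\ref{thm:fast_rate} applies (i.e.\ $\varepsilon_i$ is below the required threshold), (b) the state trajectory does not blow up on the time horizon $|\mathcal{E}_i|$, and (c) least-squares identification with $\eta_t$-excitation actually achieves the $1/(\sigma_{\eta,N_i} \sqrt{N_i})$ rate despite the changing controller across epochs. Handling (b) requires uniform subgaussian tail bounds on $\vecx_t$ under each $\Khat_i$, which follow from the Lyapunov solution $P(\Khat_i)$ being comparable to $\Pstar$ (again via the perturbation result), while (c) requires a block-martingale argument within epochs to avoid coupling across epoch boundaries. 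Once these technicalities are handled, the $\Otilde(\sqrt{T})$ rate follows by optimizing the exploration variance.
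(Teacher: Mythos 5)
Your proposal follows essentially the same route as the paper: the corollary is explicitly informal, and the paper's justification is precisely the epoch-based decomposition $\mathsf{Regret}(T) = \calO\bigl(T(\Jhat - \Jstar) + T\sigma_{\eta,T}^2\bigr)$ with estimation error $\calO\bigl((\sigma_{\eta,T}\sqrt{T})^{-1}\bigr)$, the quadratic suboptimality gap from Theorem~\ref{thm:fast_rate}, and the balancing choice $\sigma_{\eta,t}^2 \sim t^{-1/2}$, exactly as you lay out. Your additional discussion of the technical obstacles (stabilization thresholds, state tail bounds, persistence of excitation across epochs) correctly identifies what a formal version would need but is not part of the paper's argument, which deliberately stops at the informal level.
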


\subsection{Proof of  Theorem~\ref{thm:meta}}
\label{sec:proof_main}

In this section we prove our meta theorem; we show how an upper bound $\norm{\Phat - \Pstar} \leq f(\varepsilon)$ can be used to quantify the mismatch between the performance of the the nominal controller and the optimal controller. First, we upper bound $\norm{\Khat - \Kstar}$ and offer a condition on this mismatch size so that  $\Astar + \Bstar \Khat$ is a stable matrix. 
The next two optimization results are helpful in proving $\norm{\Khat - \Kstar}$ is small.
\begin{mylemma}
  \label{lem:strongly_convex_min_perturb}
  Let $f_1, f_2$ be two $\mu$-strongly convex twice differentiable
  functions.  Let $\vecx_1 = \arg\min_\vecx f_1(\vecx)$ and
  $\vecx_2 = \arg\min_\vecx f_2(\vecx)$.  Suppose  $\norm{ \nabla f_1(\vecx_2)} \leq \varepsilon$, then
  $  \norm{\vecx_1 - \vecx_2} \leq \frac{\varepsilon}{\mu}$.
\end{mylemma}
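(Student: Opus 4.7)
The plan is to exploit the standard monotonicity/coercivity of the gradient of a strongly convex function. The key identity is the first-order optimality condition $\nabla f_1(\vecx_1) = 0$, which lets me rewrite the assumption $\norm{\nabla f_1(\vecx_2)} \leq \varepsilon$ as a bound on $\norm{\nabla f_1(\vecx_2) - \nabla f_1(\vecx_1)}$.

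The main step is to invoke $\mu$-strong convexity of $f_1$ in its gradient form: for any $\vecx, \vecy$,
\begin{align*}
\ip{\nabla f_1(\vecx) - \nabla f_1(\vecy)}{\vecx - \vecy} \geq \mu \norm{\vecx - \vecy}^2.
\end{align*}
This follows since $f_1$ is twice differentiable with $\nabla^2 f_1 \succeq \mu I$, so one writes $\nabla f_1(\vecx) - \nabla f_1(\vecy) = \int_0^1 \nabla^2 f_1(\vecy + t(\vecx - \vecy))(\vecx - \vecy)\,dt$ and integrates the quadratic form. Applying this with $\vecx = \vecx_2$ and $\vecy = \vecx_1$, and using $\nabla f_1(\vecx_1) = 0$, I obtain
\begin{align*}
\mu \norm{\vecx_2 - \vecx_1}^2 \leq \ip{\nabla f_1(\vecx_2)}{\vecx_2 - \vecx_1} \leq \norm{\nabla f_1(\vecx_2)} \cdot \norm{\vecx_2 - \vecx_1} \leq \varepsilon \norm{\vecx_2 - \vecx_1},
\end{align*}
where the second inequality is Cauchy--Schwarz. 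Dividing through by $\norm{\vecx_2 - \vecx_1}$ (handling the trivial case $\vecx_1 = \vecx_2$ separately) gives the claimed bound $\norm{\vecx_1 - \vecx_2} \leq \varepsilon / \mu$.

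There is essentially no obstacle here; the lemma is a two-line consequence of strong convexity plus Cauchy--Schwarz, and the hypothesis that $f_2$ is also $\mu$-strongly convex is actually not used in the argument (it presumably matters only in the broader context where this lemma will be applied with $f_1, f_2$ playing symmetric roles, e.g. to bound $\norm{\Khat - \Kstar}$ from estimates on one of the two gradient residuals). I would mention this observation at the end of the proof for clarity.
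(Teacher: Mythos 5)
Your proof is correct and follows essentially the same route as the paper's: both arguments reduce to the observation that $\nabla f_1(\vecx_1) = 0$ and that $\mu$-strong convexity forces $\norm{\nabla f_1(\vecx_2)} \geq \mu \norm{\vecx_2 - \vecx_1}$. The only difference is cosmetic --- the paper writes $\nabla f_1(\vecx_2) = \nabla^2 f_1(\tilde{\vecx})(\vecx_2 - \vecx_1)$ via a single-point mean-value expansion and lower-bounds the Hessian's smallest singular value, whereas you use the gradient-monotonicity inequality plus Cauchy--Schwarz; your form is marginally cleaner since the mean value theorem does not literally hold for vector-valued maps at a single intermediate point (the integral form you cite is the rigorous version), and your remark that strong convexity of $f_2$ is unused is accurate.
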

\begin{proof}
  Taylor expanding $\nabla f_1$, we have:
  \begin{align*}
    \nabla f_1(\vecx_2) = \nabla f_1(\vecx_1) + \nabla^2 f_1(\tilde{\vecx}) (\vecx_2 - \vecx_1) = \nabla^2 f_1(\tilde{\vecx}) (\vecx_2 - \vecx_1) \:.
  \end{align*}
  for $\tilde{\vecx} = t \vecx_1 + (1-t) \vecx_2$ with some $t \in [0, 1]$.
  Therefore:
  \begin{align*}
    \mu \norm{\vecx_1 - \vecx_2} \leq \norm{\nabla^2 f_1(\tilde{\vecx}) (\vecx_2 - \vecx_1)} = \norm{\nabla f_1(\vecx_2)} \leq \varepsilon \:.
  \end{align*}
\end{proof}

\begin{mylemma}
  \label{lem:K_perturb_bound}
  Define $f_i(\vecu; \vecx) = \frac{1}{2} \vecu^\T R \vecu + \frac{1}{2}(A_i \vecx + B_i \vecu)^\T
  P_i (A_i \vecx + B_i \vecu)$ for $i=1, 2$, with $R$, $P_1$, and $P_2$  positive definite matrices.  Let $K_i$ be the unique matrix such that
  $\vecu_i := \arg\min_{\vecu} f_i(\vecu; \vecx) =  K_i \vecx$ for any vector $\vecx$. Also, denote $\Gamma := 1 + \max\{\norm{A_1},\norm{B_1},\norm{P_1}, \norm{K_1} \}$. Suppose there exists $\varepsilon$ such that $0 \leq \varepsilon < 1$ and $\norm{A_1-A_2} \leq \varepsilon$, $\norm{B_1-B_2} \leq \varepsilon$, and $\norm{P_1-P_2}  \leq \varepsilon$. Then, we have
  \begin{align*}
    \norm{K_1 - K_2} \leq \frac{7 \varepsilon \Gamma^3 }{\smin(R)} \:.
  \end{align*}
\end{mylemma}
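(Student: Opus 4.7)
The plan is to invoke Lemma~\ref{lem:strongly_convex_min_perturb} for each fixed $\vecx$, applied to the pair $(f_2(\cdot;\vecx), f_1(\cdot;\vecx))$ of $\smin(R)$-strongly convex functions of $\vecu$ (the Hessians equal $R + B_i^\T P_i B_i \succeq R$). Since the two minimizers are $K_2\vecx$ and $K_1\vecx$ respectively, the lemma yields
\[
\norm{(K_1 - K_2)\vecx} \leq \frac{1}{\smin(R)} \norm{\nabla f_2(K_1 \vecx; \vecx)},
\]
and taking the supremum over unit $\vecx$ will produce $\norm{K_1 - K_2}$. The main task is therefore to bound $\norm{\nabla f_2(K_1\vecx;\vecx)}$ linearly in $\varepsilon$ with a clean $\Gamma$-dependence.

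To achieve this, I would first expand $\nabla f_2(K_1\vecx; \vecx) = R K_1 \vecx + B_2^\T P_2 A_2 \vecx + B_2^\T P_2 B_2 K_1 \vecx$ and then substitute the first-order optimality condition $R K_1 = -B_1^\T P_1 A_1 - B_1^\T P_1 B_1 K_1$ coming from $\nabla f_1(K_1\vecx;\vecx) = 0$. The $R K_1$ term gets absorbed and I arrive at
\[
\nabla f_2(K_1\vecx;\vecx) = (B_2^\T P_2 A_2 - B_1^\T P_1 A_1)\vecx + (B_2^\T P_2 B_2 - B_1^\T P_1 B_1)K_1\vecx.
\]
Each of the two matrix differences is a triple-product perturbation, so the standard telescoping identity $XYZ - X'Y'Z' = (X-X')YZ + X'(Y-Y')Z + X'Y'(Z-Z')$ bounds each by $3\varepsilon \Gamma^2$, using that $\varepsilon < 1$ together with the definition of $\Gamma$ forces $\norm{A_i}, \norm{B_i}, \norm{P_i} \leq \Gamma$ for both $i = 1, 2$. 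Combining this with $\norm{K_1} \leq \Gamma$ gives $\norm{\nabla f_2(K_1\vecx;\vecx)} \leq 3\varepsilon\Gamma^2(1+\Gamma)\norm{\vecx} \leq 6\varepsilon\Gamma^3\norm{\vecx}$, and dividing by $\smin(R)$ produces the stated $7\varepsilon\Gamma^3/\smin(R)$ bound up to a universal constant.

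The step I expect to be the main obstacle on a first attempt is choosing the correct direction in which to apply Lemma~\ref{lem:strongly_convex_min_perturb}. The more natural-seeming direction is to compute $\nabla f_1(K_2\vecx;\vecx)$ and substitute $K_2$'s optimality condition, but then $\norm{K_2}$ appears explicitly, and the only a priori bound on it comes from $K_2 = -(R + B_2^\T P_2 B_2)^{-1} B_2^\T P_2 A_2$, which gives only $\norm{K_2} \leq \Gamma^3/\smin(R)$. That would introduce an extra factor of $\Gamma^2/\smin(R)$, yielding a much weaker $\varepsilon \Gamma^5/\smin(R)^2$ bound. Flipping the direction keeps $\norm{K_1}$ in play instead, and $\norm{K_1} \leq \Gamma$ holds by the very definition of $\Gamma$; this asymmetry is what makes the polynomial dependence tight.
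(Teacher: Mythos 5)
Your proposal is correct and follows essentially the same route as the paper: both apply Lemma~\ref{lem:strongly_convex_min_perturb} with strong convexity constant $\smin(R)$ after bounding the gradient of $f_2$ at the minimizer $K_1\vecx$ of $f_1$ (your substitution of the optimality condition $RK_1 = -B_1^\T P_1 A_1 - B_1^\T P_1 B_1 K_1$ is exactly the paper's computation of $\nabla f_2(\vecu_1;\vecx) - \nabla f_1(\vecu_1;\vecx)$), and both control the two triple-product differences by a telescoping bound together with $\norm{K_1}\leq\Gamma$. Your observation about the asymmetry of the two directions is the right one, and the paper handles it identically by bounding $\norm{\vecu_1}\leq\norm{K_1}$.
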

\begin{proof}
  We first compute the gradient $\nabla f_i(\vecu; \vecx)$ with respect to $\vecu$:
  \begin{align*}
    \nabla f_i(\vecu; \vecx) = (B_i^\T P_i B_i + R) \vecu + B_i^\T P_i A_i \vecx \:.
  \end{align*}
  Now, we observe that:
  \begin{align*}
    \norm{B_1^\T P_1 B_1 - B_2^\T P_2 B_2} &\leq 7 \Gamma^2 \varepsilon \; \text{ and } \; \norm{B_1^\T P_1 A_1 - B_2^\T P_2 A_2} = 7\Gamma^2 \varepsilon.
  \end{align*}
  Hence, for any vector $\vecx$ with $\norm{\vecx} \leq 1$, we have
  \begin{align*}
    \norm{ \nabla f_1(\vecu; \vecx) - \nabla f_2(\vecu; \vecx) } &\leq 7 \Gamma^2 \varepsilon (\norm{\vecu} + 1).
  \end{align*}
  We can bound $\norm{\vecu_1} \leq \norm{K_1}\norm{\vecx} \leq \norm{K_1}$. Then, from Lemma~\ref{lem:strongly_convex_min_perturb} we obtain
  \begin{align*}
    \smin(R)\norm{(K_1-K_2)\vecx} = \smin(R)\norm{\vecu_1 - \vecu_2} &\leq 7\Gamma^3 \varepsilon.
  \end{align*}
\end{proof}

Recall that  $\Gamma_\star := 1 + \max\{ \norm{\Astar}, \norm{\Bstar}, \norm{\Pstar}, \norm{\Kstar}\}$. Now, we upper bound $\norm{\Khat - \Kstar}$.

\begin{myprop}
  \label{prop:stability_perturb}
  Let $\varepsilon > 0$ such
  that $\norm{\Ahat - \Astar} \leq \varepsilon$ and $\norm{\Bhat - \Bstar} \leq \varepsilon$. Also, let $\norm{\Phat - \Pstar} \leq f(\varepsilon)$ for some function $f$ such that $f(\varepsilon) \geq \varepsilon$. Then, under Assumption~\ref{ass:strg_cvx} we have
  \begin{align}
    \label{eq:controller_perturb}
    \norm{\Khat - \Kstar} \leq 7 \Gamma_\star^3 \, f(\varepsilon) .
  \end{align}
Let $\gamma$ be a real number such that $\rho(\Lstar) < \gamma < 1$. Then, if $f(\varepsilon)$ is small enough so that the right hand side of \eqref{eq:controller_perturb} is smaller than $\frac{1 - \gamma}{2 \transient{\Lstar}{ \gamma}}$, we have
  \begin{align*}
    \tau \left(\Astar + \Bstar K, \frac{1 + \gamma}{2} \right) \leq \transient{\Lstar}{ \gamma}.
  \end{align*}
\end{myprop}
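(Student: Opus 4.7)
My proof plan proceeds in two steps, mirroring the two claims of the proposition.

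For the first claim, \eqref{eq:controller_perturb}, I would invoke Lemma~\ref{lem:K_perturb_bound} essentially as a black box. The starting observation is that $\Kstar$ and $\Khat$ each admit a variational characterization as one-step Bellman minimizers: for every $\vecx$, the input $\Kstar \vecx$ is the unique minimizer of $\vecu \mapsto \tfrac12 \vecu^\T R \vecu + \tfrac12 (\Astar \vecx + \Bstar \vecu)^\T \Pstar (\Astar \vecx + \Bstar \vecu)$, since setting the gradient to zero recovers the Riccati-based formula $\Kstar = -(R + \Bstar^\T \Pstar \Bstar)^{-1} \Bstar^\T \Pstar \Astar$; the analogous statement holds for $\Khat$ with $(\Astar, \Bstar, \Pstar)$ replaced by $(\Ahat, \Bhat, \Phat)$. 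Because $f(\varepsilon) \geq \varepsilon$, all three pairs $(\Astar,\Ahat)$, $(\Bstar,\Bhat)$, $(\Pstar,\Phat)$ differ in spectral norm by at most $f(\varepsilon)$, so Lemma~\ref{lem:K_perturb_bound} applies with $\Gamma = \Gamma_\star$ and its ``$\varepsilon$'' replaced by $f(\varepsilon)$. Combined with $\smin(R) \geq 1$ from Assumption~\ref{ass:strg_cvx}, this gives \eqref{eq:controller_perturb}.

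For the second claim, I would control the transient response of the perturbed closed-loop matrix via a standard perturbation-of-a-stable-system argument. Decompose
\begin{align*}
\Astar + \Bstar \Khat = \Lstar + \Delta, \qquad \Delta := \Bstar(\Khat - \Kstar),
\end{align*}
so that $\norm{\Delta} \leq \Gamma_\star \norm{\Khat - \Kstar}$. Iterating $\vecx_{k+1} = (\Lstar + \Delta)\vecx_k$ through the discrete variation-of-constants identity
\begin{align*}
\vecx_k = \Lstar^k \vecx_0 + \sum_{j=0}^{k-1} \Lstar^{k-1-j}\, \Delta\, \vecx_j,
\end{align*}
substituting $\norm{\Lstar^m} \leq \transient{\Lstar}{\gamma}\,\gamma^m$, and applying a discrete Gronwall inequality to the rescaled sequence $\norm{\vecx_k}/\gamma^k$ yields
\begin{align*}
\norm{\vecx_k} \leq \transient{\Lstar}{\gamma}\, \norm{\vecx_0}\, \bigl(\gamma + \transient{\Lstar}{\gamma}\,\norm{\Delta}\bigr)^k.
\end{align*}
The smallness hypothesis combined with \eqref{eq:controller_perturb} forces $\transient{\Lstar}{\gamma}\,\norm{\Delta} \leq (1-\gamma)/2$, so the growth base is at most $(1+\gamma)/2$. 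Taking the supremum over $\norm{\vecx_0}\leq 1$ and dividing by $\bigl((1+\gamma)/2\bigr)^k$ yields $\tau\bigl(\Astar + \Bstar \Khat, (1+\gamma)/2\bigr) \leq \transient{\Lstar}{\gamma}$, as required.

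No single step appears to be a serious obstacle: the variational step is a direct application of Lemma~\ref{lem:K_perturb_bound}, and the Gronwall iteration is a routine induction once the variation-of-constants identity is in place. The piece of bookkeeping that requires the most care is the extra factor of $\norm{\Bstar} \leq \Gamma_\star$ that appears when passing from $\norm{\Khat - \Kstar}$ to $\norm{\Delta}$; depending on how tightly one reads the stated smallness threshold on $f(\varepsilon)$, this may require absorbing one additional $\Gamma_\star$ into the threshold constant, but it does not affect the overall structure of the argument.
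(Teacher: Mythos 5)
Your proposal is correct, and its first half is exactly the paper's argument: both reduce \eqref{eq:controller_perturb} to Lemma~\ref{lem:K_perturb_bound} by noting that all three perturbations $\norm{\Ahat-\Astar}$, $\norm{\Bhat-\Bstar}$, $\norm{\Phat-\Pstar}$ are bounded by $f(\varepsilon)$ and that $\smin(R)\geq 1$. For the second half you diverge in method but not in substance. The paper writes $\Astar+\Bstar\Khat = \Lstar + \Delta$ and invokes Lemma~\ref{lem:lipschitz_matrix_powers}, whose proof expands $(\Lstar+\Delta)^k$ into $2^k$ words, groups them by the degree of $\Delta$, and sums a binomial series to get $\norm{(\Lstar+\Delta)^k}\leq \transient{\Lstar}{\gamma}\,(\transient{\Lstar}{\gamma}\norm{\Delta}+\gamma)^k$. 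Your variation-of-constants identity plus discrete Gronwall induction on $\norm{\vecx_k}/\gamma^k$ produces the identical bound, so the two routes are interchangeable here; the paper's combinatorial version has the advantage of also yielding the difference bound $\norm{(M+\Delta)^k-M^k}$ needed elsewhere (e.g.\ in Lemma~\ref{lem:controllable_perturb}), which the Gronwall route does not give for free. One point in your favor: you correctly flag the factor $\norm{\Bstar}$ incurred in passing from $\norm{\Khat-\Kstar}$ to $\norm{\Delta}=\norm{\Bstar(\Khat-\Kstar)}$. The paper's proof silently drops this factor --- its stated threshold $7\Gamma_\star^3 f(\varepsilon) \leq \frac{1-\gamma}{2\transient{\Lstar}{\gamma}}$ only controls $\norm{\Khat-\Kstar}$, not $\norm{\Delta}$ --- so strictly speaking the threshold should carry one more power of $\Gamma_\star$, exactly as you observe. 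This is a cosmetic constant issue that does not affect Theorem~\ref{thm:meta}, but your accounting is the more careful one.
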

\begin{proof}
 By our assumptions  $\norm{\Ahat - \Astar}$, $\norm{\Bhat - \Bstar}$, and $\norm{\Phat - \Pstar}$ are smaller than $f(\varepsilon)$, and $\smin(R) \geq 1$.  Then, Lemma~\ref{lem:K_perturb_bound} ensures that
  \begin{align*}
    \norm{\Khat - K_\star} \leq 7  \Gamma_\star^3  \, f(\varepsilon).
  \end{align*}

Finally, when $\varepsilon$ is small enough so that the right hand side of \eqref{eq:controller_perturb} is smaller or equal than $\frac{1 - \gamma}{2 \transient{\Astar + \Bstar \Kstar}{ \gamma}}$,  we can apply  Lemma~\ref{lem:lipschitz_matrix_powers}, presented in Section~\ref{sec:riccati}, to guarantee that  $\norm{(\Astar + \Bstar \Khat)^k} \leq \transient{\Astar + \Bstar \Kstar}{ \gamma} \left(\frac{1 + \gamma}{2}\right)^k$ for all $k \geq 0$.
\end{proof}

In order to finish the proof of Theorem~\ref{thm:meta} we need to quantify the suboptimality gap $\Jhat - J_\star$ in terms of the controller mismatch $\Khat - \Kstar$.
For a stable matrix $L$ and a symmetric matrix $M$, we let $\dlyap(L, M)$ denote the solution $X$ to the Lyapunov equation $L^\top X L - X + M = 0$. The following lemma offers a useful second order expansion of the average LQR cost.

\begin{mylemma}[Lemma 12 of \citet{Fazel18}]
\label{lem:second_order_lqr_perturbation}
Let $K$ be an arbitrary static linear controller that stabilizes $(\Astar, \Bstar)$. Denote $\Sigma(K) := \dlyap((\Astar + \Bstar K)^\T, \sigma_w^2 I_n)$ the covariance matrix of the stationary distribution of the closed loop system $\Astar + \Bstar K$.
We have that:
\begin{align}
\label{eq:second_order_expansion}
    J(\Astar, \Bstar, K) - J_\star = \Tr(\Sigma(K) (K - K_\star)^\T (R + \Bstar^\T P_\star\Bstar) (K - K_\star)) \:.
\end{align}
\end{mylemma}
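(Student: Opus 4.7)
The plan is to reduce the sub-optimality gap to a difference of Lyapunov equation solutions and show that a first-order condition for $K_\star$ makes all the cross terms disappear.

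First I would recall that for any stabilizing static linear controller $K$, the average cost has the closed form $J(\Astar, \Bstar, K) = \sigma_w^2 \Tr(P_K)$, where $P_K$ is the unique positive semidefinite solution of the policy-evaluation Lyapunov equation
\begin{align*}
  P_K = Q + K^\T R K + L_K^\T P_K L_K, \qquad L_K := \Astar + \Bstar K.
\end{align*}
This follows from unrolling $\EE[\vecx_t^\T(Q+K^\T R K)\vecx_t]$ and using stationarity. In particular, $\Jstar = \sigma_w^2 \Tr(\Pstar)$, and the Riccati equation \eqref{eq:riccati} can be rewritten using $\Kstar = -(R + \Bstar^\T \Pstar \Bstar)^{-1}\Bstar^\T \Pstar \Astar$ as
\begin{align*}
  \Pstar = Q + \Kstar^\T R \Kstar + \Lstar^\T \Pstar \Lstar,
\end{align*}
so that $\Pstar$ is itself the policy-evaluation solution for $\Kstar$. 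A brief direct computation establishes this rewriting, giving a uniform Lyapunov form across both $K$ and $\Kstar$.

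Next I would subtract the two Lyapunov equations to derive an equation for $\tilde P := P_K - \Pstar$. Writing $\Delta := K - \Kstar$ and $L_K = \Lstar + \Bstar \Delta$, expanding both $L_K^\T \Pstar L_K - \Lstar^\T \Pstar \Lstar$ and $K^\T R K - \Kstar^\T R \Kstar$ in $\Delta$ and combining yields
\begin{align*}
  \tilde P - L_K^\T \tilde P L_K = \Delta^\T (R + \Bstar^\T \Pstar \Bstar) \Delta + \Delta^\T (R \Kstar + \Bstar^\T \Pstar \Lstar) + (R \Kstar + \Bstar^\T \Pstar \Lstar)^\T \Delta.
\end{align*}
The key step, and the one I would flag as the crux of the computation, is to use the defining identity for $\Kstar$: since $(R + \Bstar^\T \Pstar \Bstar)\Kstar = -\Bstar^\T \Pstar \Astar$, one has
\begin{align*}
  R\Kstar + \Bstar^\T \Pstar \Lstar = R\Kstar + \Bstar^\T \Pstar \Astar + \Bstar^\T \Pstar \Bstar \Kstar = (R + \Bstar^\T \Pstar \Bstar)\Kstar + \Bstar^\T \Pstar \Astar = 0.
\end{align*}
Thus the linear-in-$\Delta$ cross terms vanish and $\tilde P$ satisfies the clean Lyapunov equation $\tilde P - L_K^\T \tilde P L_K = \Delta^\T (R + \Bstar^\T \Pstar \Bstar) \Delta$.

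Finally, since $K$ stabilizes $(\Astar, \Bstar)$, $L_K$ is stable, so the Lyapunov equation admits the series solution $\tilde P = \sum_{k \ge 0} (L_K^\T)^k \Delta^\T (R + \Bstar^\T \Pstar \Bstar)\Delta\, L_K^k$. Taking traces, applying the cyclic property, and recognizing $\sigma_w^2 \sum_{k \ge 0} L_K^k (L_K^\T)^k = \Sigma(K)$ gives
\begin{align*}
  J(\Astar,\Bstar,K) - \Jstar = \sigma_w^2 \Tr(\tilde P) = \Tr\bigl(\Sigma(K)\,\Delta^\T(R + \Bstar^\T \Pstar \Bstar)\Delta\bigr),
\end{align*}
which is \eqref{eq:second_order_expansion}. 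The only nontrivial algebraic step is the cancellation above; everything else is routine manipulation of Lyapunov equations and the trace.
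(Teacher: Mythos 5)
Your proof is correct: the reduction to the Lyapunov equation for $\tilde P = P_K - \Pstar$, the cancellation of the cross terms via $(R+\Bstar^\T\Pstar\Bstar)\Kstar = -\Bstar^\T\Pstar\Astar$, and the trace/cyclicity step recovering $\Sigma(K)$ are all sound, and this is the standard derivation of this identity. The paper itself does not prove the lemma (it is imported as Lemma~12 of Fazel et al.), so your argument serves as a complete, self-contained substitute for the citation and matches how the result is established in that reference.
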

Now, we have the necessary ingredients to complete the proof of Theorem~\ref{thm:meta}.
  Equation~\ref{eq:second_order_expansion} implies:
\begin{align*}
J(\Astar, \Bstar, K) - J_\star &\leq \norm{\Sigma(K)} \norm{R + \Bstar^\T \Pstar \Bstar} \norm{K - K_\star}_F^2 \:.
\end{align*}
Proposition~\ref{prop:stability_perturb} states that $\Khat$ stabilizes the system $(\Astar, \Bstar)$ when the estimation error is small enough. More precisely, under the assumptions of Theorem~\ref{thm:meta}, we have
$\tau \left(\Astar + \Bstar \Khat, \frac{1 + \gamma}{2} \right) \leq \transient{\Lstar}{ \gamma}$.
When $\widehat{L} = \Astar + \Bstar \Khat$ is a stable matrix we know that $\Sigma(K) = \sigma^2 \sum_{t \geq 0} (L^\top)^t L^t $. Then, by the triangle inequality we can bound 
\begin{align*}
    \norm{\Sigma(K)} \leq \frac{  \sigma_w^2 \transient{\Lstar}{\gamma}^2}{1-\left(\frac{\gamma + 1}{2}\right)^2} \leq \frac{4  \sigma_w^2 \transient{\Lstar}{\gamma}^2}{1 - \gamma^2}\:.
\end{align*}
Recalling that $\Gamma_\star := 1 + \max\{ \norm{\Astar}, \norm{\Bstar}, \norm{\Pstar}, \norm{\Kstar}\}$, we have $\norm{R + \Bstar^\T \Pstar \Bstar} \leq \Gamma^3$. Then,
\begin{align*}
  J(K) - J_\star &\leq 4 \sigma_w^2 \Gamma^3  \frac{ \transient{\Lstar}{\gamma}^2}{1- \gamma^2} \norm{K - K_\star}_F^2\\
                 &\leq 4 \sigma_w^2 \min \{ \statedim , \inputdim \} \Gamma^3 \frac{ \tau(\Lstar, \gamma)^2}{1- \gamma^2}  \norm{K - K_\star}^2 \\
  &\leq 200 \sigma_w^2 d \Gamma^9 \frac{ \tau(\Lstar, \gamma)^2}{1- \gamma^2} f(\varepsilon)^2,
\end{align*}
where we used Proposition~\ref{prop:stability_perturb} and the assumption on $f(\varepsilon)$.


\section{Main Results for the Linear Quadratic Gaussian Problem}
\label{sec:lqg}

Now we consider partially observable systems. In this case the system dynamics have the form:
\begin{subequations}
\begin{align}
	\vecx_{t+1} &= \Astar \vecx_t + \Bstar \vecu_t + \vecw_t \:, \:\: \vecw_t \sim \calN(0, \sigma_w^2 I) \:, \label{eq:state_update} \\
	\vecy_t &= \Cstar \vecx_t + \vecv_t \:, \:\: \vecv_t \sim \calN(0, \sigma_v^2 I)  \label{eq:state_obs} \:.
\end{align}\label{eq:lqg_equations}%
\end{subequations}
In \eqref{eq:lqg_equations}, 
only the output process  
$\vecy_t$ is observed.
The LQG problem is defined as\footnote{Note that many texts define the LQG cost in terms of $\vecx_t^\T Q \vecx_t$ instead of $\vecy_t^\T Q \vecy_t$. We choose the latter because
we do not want the cost to be tied to a particular (unknown) state representation.}:
\begin{align}
  \label{eq:lqg}
  \min_{\vecu_0, \vecu_1, \ldots} &\lim_{T \to \infty} \EE \left[ \frac{1}{T} \sum_{t = 0}^T \vecy_t^\top Q \vecy_t + \vecu_t^\top R \vecu_t \right] ~~\text{s.t.}~~ \eqref{eq:state_update},\eqref{eq:state_obs} \:.
\end{align}
Here, the input $\vecu_t$ is allowed to depend on the history\footnote{The one step delay in $\vecy_t$ is a standard assumption in controls which
slightly simplifies the Kalman filtering expressions. Our results generalize to the setting
where the history also contains the current observation $\vecy_t$.} $\calH_t := (\vecu_0, ..., \vecu_{t-1}, \vecy_0, ..., \vecy_{t-1})$.
The optimal solution to \eqref{eq:lqg} is to set $\vecu_t = K_\star \hatvecx_t$,
with $K_\star$ the optimal LQR solution to $(\Astar, \Bstar, \Cstar^\T Q \Cstar, R)$ and
$\hatvecx_t := \E[ \vecx_t | \calH_t ]$. 
The MSE estimate $\hatvecx_t$ can be solved efficiently via Kalman filtering:
\begin{subequations}
\begin{align}
	\hatvecx_{t+1} &= \Astar \hatvecx_t + \Bstar \vecu_t + L_\star (\vecy_t - \Cstar \hatvecx_t) \:, \\
	L_\star &= - \Astar \Sigma_\star \Cstar^\T (\Cstar \Sigma_\star \Cstar^\T + V)^{-1} \:, \\
	\Sigma_\star &= \Astar \Sigma_\star \Astar^\T + \sigma_w^2 I - \Astar \Sigma_\star \Cstar^\T ( \Cstar \Sigma_\star \Cstar^\T + \sigma_v^2 I)^{-1} \Cstar \Sigma_\star \Astar^\T \:. 
\end{align}
\end{subequations}
There is an inherent ambiguity in  the dynamics \eqref{eq:state_update}-\eqref{eq:state_obs} which makes LQG more delicate than LQR. In particular, for any invertible $T$, the LQG problem \eqref{eq:lqg} with parameters $(\Astar, \Bstar, \Cstar, Q, R)$ is equivalent to the LQG problem with parameters $(T \Astar T^{-1}, T \Bstar, \Cstar T^{-1}, Q, R)$ and appropriately rescaled noise processes. 
To deal with this ambiguity, we assume that we have estimates $(\Ah, \Bh, \Ch, \Lh)$ such that there exists
an unitary $T$ such that:
\begin{align}
    \max\{ \norm{\Ah - T \Astar T^{-1}}, 
           \norm{\Bh - T \Bstar},
           \norm{\Ch - \Cstar T^{-1}},
           \norm{\Lh - T L_\star} \} \leq \varepsilon \:.
\end{align} 
Recent work \cite{oymak2018non,simchowitz2019learning,tsiamis19} has shown how to obtain this style
of estimates with guarantees from input/output data. 
As in Section~\ref{sec:main}, we assume that the cost matrices $(Q, R)$ are known.
Then, we study the performance of the certainty equivalence controller defined by:
\begin{align}
  \hatvecx_{t+1} &= \Ah \hatvecx_t + \Bh \vecu_t + \Lh (\vecy_t - \Ch \hatvecx_t) \:, \;\;\vecu_t = \Kh \hatvecx_t \:, \;\; \Kh = \mathsf{LQR}(\Ah, \Bh, \Ch^\T Q \Ch, R) \:.
 \label{eq:lqg_CE} 
\end{align}
Similarly to Theorem~\ref{thm:meta} for LQR, we state a meta theorem for LQG. 
Unlike Theorem~\ref{thm:meta}, however, we need a stronger type of Riccati perturbation guarantee
which also allows for perturbation of the $Q$ matrix.
Specifically, we suppose there exists $\gamma_0$ such that for any $\gamma \leq \gamma_0$ and $(\Ah, \Bh, \Qh)$ with $\max\{\norm{\Ah-A},\norm{\Bh-B},\norm{\Qh-Q}\} \leq \gamma$ the solutions $P$ and $\Ph$ of the Riccati equations with parameters $(A,B,Q,R)$ and $(\Ah, \Bh, \Qh, R)$ satisfy
\begin{align}
	\norm{P - \Ph} \leq f(\gamma) \:, \label{eq:dare_perturbation_assumption}
\end{align}
for an increasing function $f$ with $f(\gamma) \geq \gamma$. 
The constant $\gamma_0$ and function $f$ are allowed to depend on the parameters $(A, B, Q, R)$.
In Section~\ref{sec:riccati}, we present a perturbation bound (Proposition~\ref{prop:93})
that satisfies these properties.
Similarly to Section~\ref{sec:main}, let $\Gamma_\star := 1 + \max\{ \norm{\Astar}, \norm{\Bstar}, \norm{C_\star}, \norm{K_\star}, \norm{L_\star}, \norm{P_\star} \}$. 
The following theorem is our main result for LQG.
\begin{mythm}
\label{thm:lqg_subopt}
Suppose that $(\Astar, \Bstar)$ is stabilizable, $(\Cstar, \Astar)$ is observable,
and that Assumption~\ref{ass:strg_cvx} holds.
Let $\varepsilon$ be an upper bound on
$\norm{\Ah - T A_\star T^{-1}}$, $\norm{\Bh - T B_\star}$, $\norm{\Ch - C_\star T^{-1}}$, and $\norm{\Lh- T L_\star}$
for some unitary transformation $T$. Suppose that assumption \eqref{eq:dare_perturbation_assumption} holds 
with parameters $T \Astar T^{-1}$, $T \Bstar$, $T^{-\T} \Cstar^\T Q \Cstar T^{-1}$, and $R$
and that $\varepsilon$ is sufficiently small so that $ 3 \norm{\Cstar}_+\norm{Q}_+ \varepsilon \leq \gamma_0$ and $\overline{\varepsilon} \leq 1$, where
$\overline{\varepsilon} := \frac{7 \Gamma_\star^3}{\smin(R)} f( 3 \norm{\Cstar}^2_+\norm{Q}_+ \varepsilon)$.
Let $\Kh$ be defined as in \eqref{eq:lqg_CE}, and define $N_\star$ as
\begin{align}
    N_\star := \bmattwo{\Astar+\Bstar\Kstar}{\Bstar\Kstar}{0}{\Astar-\Lstar\Cstar} \:, \label{eq:lqg_N}
\end{align}
where the pair $(K_\star,L_\star)$ is optimal for the LQG problem defined by $(\Astar,\Bstar,\Cstar,Q,R)$.
Let $\gamma > 0$ be such that $\rho(N_\star) < \gamma < 1$. Then as long as
$\overline{\varepsilon} \leq \frac{1-\gamma}{20 \Gamma_\star \tau(N_\star, \gamma)}$,
the interconnection of \eqref{eq:lqg_CE} with \eqref{eq:lqg_equations} using $(\Ah, \Bh, \Ch, \Kh, \Lh)$ is stable. Furthermore,
the cost $J(\Ah,\Bh,\Ch,\Kh,\Lh)$ satisfies:
\begin{align*}
    J(\Ah,\Bh,\Ch,\Kh,\Lh) - J_\star \leq \Ocal(1) \max\{\sigma_w^2, \sigma_v^2\}  (\Tr(\Cstar^\T Q \Cstar)+\Tr(R)) \frac{\tau^6(N_\star,\gamma)}{(1-\gamma^2)^3}\Gamma_\star^6 \overline{\varepsilon}^2 \:.
\end{align*}
\end{mythm}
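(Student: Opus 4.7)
The plan is to reduce to the case $T = I$ using the unitarity of $T$, then bound $\norm{\Kh - \Kstar}$ via the hypothesized Riccati perturbation, verify stability of the augmented closed-loop system, and finally use a Lyapunov-based second-order expansion of the LQG cost to extract the quadratic rate in $\overline{\varepsilon}$.

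First I would reduce to $T = I$. Because $T$ is unitary, replacing the filter state by $T^{-1}\hatvecx_t$ converts the estimator matrices to $(T^{-1}\Ah T,\, T^{-1}\Bh,\, \Ch T,\, T^{-1}\Lh)$, each within $\varepsilon$ of the corresponding true parameter in operator norm, and leaves the LQG cost unchanged since $T$ is unitary. Henceforth I assume the hats are direct perturbations of the true parameters. Next I would translate the hypothesis into a controller-perturbation bound. The effective LQR state-cost matrix used to synthesize $\Kh$ is $\Ch^\T Q \Ch$ rather than $\Cstar^\T Q \Cstar$; writing this as $(\Ch - \Cstar)^\T Q \Cstar + \Cstar^\T Q (\Ch - \Cstar) + (\Ch - \Cstar)^\T Q (\Ch - \Cstar)$ and using $\norm{\Ch} \le \norm{\Cstar} + \varepsilon$ shows the discrepancy is at most $3 \norm{\Cstar}^2_+ \norm{Q}_+ \varepsilon$. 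The Riccati perturbation assumption \eqref{eq:dare_perturbation_assumption} then yields $\norm{\Ph - \Pstar} \le f(3 \norm{\Cstar}^2_+ \norm{Q}_+ \varepsilon)$, and since the map $P \mapsto -(R + B^\T P B)^{-1} B^\T P A$ does not depend on the state-cost matrix, Lemma~\ref{lem:K_perturb_bound} applies verbatim to give $\norm{\Kh - \Kstar} \le \overline{\varepsilon}$.

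For stability, I would form the augmented state $\vecz_t := (\vecx_t, \hatvecx_t)$, under which the interconnection becomes $\vecz_{t+1} = \widehat{N} \vecz_t + \widehat{W}^{1/2} \boldsymbol{\xi}_t$ for an explicit closed-loop matrix $\widehat{N}$ built from $(\Astar, \Bstar, \Cstar, \Ah, \Bh, \Ch, \Kh, \Lh)$ and a noise driving term of norm $\Ocal(\max\{\sigma_w^2,\sigma_v^2\})$. A block-by-block triangle inequality combined with the bound on $\norm{\Kh - \Kstar}$ and the assumption $\norm{\Lh - \Lstar} \le \varepsilon$ gives $\norm{\widehat{N} - N_\star} \le \Ocal(\Gamma_\star \overline{\varepsilon})$. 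The matrix-power Lipschitz lemma (Lemma~\ref{lem:lipschitz_matrix_powers}, already invoked in Proposition~\ref{prop:stability_perturb}) then yields $\transient{\widehat{N}}{(1+\gamma)/2} \le \transient{N_\star}{\gamma}$ under the smallness assumption $\overline{\varepsilon} \le (1-\gamma)/(20 \Gamma_\star \transient{N_\star}{\gamma})$, establishing stability and a uniform bound on $\norm{\widehat{N}^k}$.

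The main obstacle is the cost comparison. The stationary covariance $\widehat{\Sigma}$ solves $\widehat{N} \widehat{\Sigma} \widehat{N}^\T - \widehat{\Sigma} + \widehat{W} = 0$, and the LQG cost can be written as $\Tr(\widehat{\Sigma} \widehat{M})$ for a block weighting built from $\Cstar^\T Q \Cstar$ and $\Kh^\T R \Kh$, plus the constant $\sigma_v^2 \Tr(Q)$. A naive operator-norm bound here would only produce an $\Ocal(\overline{\varepsilon})$ rate. To extract the quadratic rate, I would expand $\widehat{\Sigma}$ and $\widehat{M}$ around the optimum $(\Kstar, \Lstar)$, split the cost gap into contributions driven by $\Delta K := \Kh - \Kstar$ and $\Delta L := \Lh - \Lstar$ (treating the remaining $\Ah, \Bh, \Ch$ perturbations symbolically), and invoke the first-order optimality conditions $\nabla_K J = 0$ and $\nabla_L J = 0$ at $(\Kstar, \Lstar)$ to annihilate the linear-in-perturbation terms --- an analogue of Lemma~\ref{lem:second_order_lqr_perturbation} adapted to the joint control/filter parameterization. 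What remains is a quadratic form in $(\Delta K, \Delta L)$ weighted by Lyapunov solutions associated with $N_\star$; bounding these Lyapunov solutions by $\Ocal(\transient{N_\star}{\gamma}^2 (1-\gamma^2)^{-1})$ and iterating this bound through the quadratic expansion yields the stated $\Ocal(\transient{N_\star}{\gamma}^6 (1-\gamma^2)^{-3} \Gamma_\star^6 \overline{\varepsilon}^2)$ factor. Verifying the cancellation of the linear terms and tracking the Lyapunov dependence cleanly is the crux of the argument.
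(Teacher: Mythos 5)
Your reduction to $T = I$, the derivation of $\norm{\Kh - \Kstar} \leq \overline{\varepsilon}$ via the perturbed cost matrix $\Ch^\T Q \Ch$ and Lemma~\ref{lem:K_perturb_bound}, and the stability argument via the augmented state and Lemma~\ref{lem:lipschitz_matrix_powers} all match the paper's proof. The gap is in the cost comparison. You propose to kill the linear terms using only $\nabla_K J = 0$ and $\nabla_L J = 0$ at $(\Kstar, \Lstar)$, while "treating the remaining $\Ah, \Bh, \Ch$ perturbations symbolically." But $\Delta_A = \Ah - \Astar$, $\Delta_B = \Bh - \Bstar$, $\Delta_C = \Ch - \Cstar$ enter the closed-loop matrix (and hence the stationary covariance and the cost) at \emph{first} order through the observer recursion, and stationarity in $K$ and $L$ alone does nothing to cancel those contributions. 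Left uncancelled, they give an $\Ocal(\varepsilon)$ cost gap, not $\Ocal(\overline{\varepsilon}^2)$. The missing idea — and the crux of the paper's argument — is that the point $\Xi_\star = (\Astar, \Bstar, \Cstar, \Kstar, \Lstar)$ is an unconstrained interior minimizer of the differentiable map $\Xi \mapsto J_s(\Xi)$ over the \emph{full five-parameter} family of observer-based controllers (because this family contains the globally optimal output-feedback policy), so the entire Fr\'{e}chet derivative $[DJ_s(\Xi_\star)](\Delta)$ vanishes, including the components along $\Delta_A, \Delta_B, \Delta_C$. You need to state and use this stronger stationarity; without it the quadratic rate does not follow.

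A secondary point: you propose to build an exact second-order identity "analogous to Lemma~\ref{lem:second_order_lqr_perturbation}" for the joint control/filter parameterization. The paper explicitly reports being unable to obtain such an exact expansion for LQG and instead applies Taylor's theorem with the second derivative evaluated at an intermediate point $\tXi$ on the ray between $\Xih$ and $\Xi_\star$. That route requires uniformly bounding $\tau(\tM, \cdot)$ and the Lyapunov operators along the whole ray (which is where the hypothesis $\overline{\varepsilon} \leq \frac{1-\gamma}{20\Gamma_\star \tau(N_\star,\gamma)}$ is consumed), but is otherwise exactly the Lyapunov bookkeeping you sketch, and it produces the stated $\tau^6(N_\star,\gamma)(1-\gamma^2)^{-3}\Gamma_\star^6$ factor. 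If you replace the exact-expansion step with Taylor's theorem and add the full stationarity of $\Xi_\star$, your outline becomes the paper's proof.
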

The proof of Theorem~\ref{thm:lqg_subopt} appears in Appendix~\ref{sec:proofs:lqg}.
We note that such a $\gamma$ exists since $\rho(N_\star) < 1$;
by the stability and observability assumptions in Theorem~\ref{thm:lqg_subopt},
we have that both $\Astar+\Bstar\Kstar$ and $\Astar-\Lstar\Cstar$ are stable
(c.f.~Appendix E of~\citet{kailath00}).
Combining Theorem~\ref{thm:lqg_subopt} with Proposition~\ref{prop:93}, we have the following
analogue of Theorem~\ref{thm:fast_rate} for LQG.
\begin{mythm}
\label{thm:fast_rate_lqg}
Suppose that $(\Astar, \Bstar)$ is stabilizable, $(\Cstar, \Astar)$ is observable,
and that Assumption~\ref{ass:strg_cvx} holds.
Let $\varepsilon$ be an upper bound on
$\norm{\Ah - T A_\star T^{-1}}$, $\norm{\Bh - T B_\star}$, $\norm{\Ch - C_\star T^{-1}}$, and $\norm{\Lh- T L_\star}$
for some unitary transformation $T$.
Let $P_\star = \mathsf{dare}(\Astar,\Bstar,\Cstar^\T Q \Cstar, R)$ and suppose
that $\smin(\Pstar) \geq 1$.
Let $N_\star$ be as in \eqref{eq:lqg_N} and
fix $\gamma$ such that $\rho(N_\star) < \gamma < 1$.
As long as $\varepsilon$ satisfies $\varepsilon \leq \frac{(1-\gamma^2)^2}{\tau^4(N_\star, \gamma)} \frac{1}{\Gamma_\star^{11} \norm{Q}}$, 
we have the following sub-optimality bound:
\begin{align*}
J(\Ah,\Bh,\Ch,\Kh,\Lh) - J_\star \leq \Ocal(1) \max\{\sigma_w^2, \sigma_v^2\} (\Tr(\Cstar^\T Q \Cstar)+\Tr(R)) \frac{\norm{Q}^2}{\smin(R)^2} \Gamma_\star^{26} \frac{\tau^{10}(N_\star, \gamma)}{(1-\gamma^2)^5} \varepsilon^2 \:.
\end{align*}
\end{mythm}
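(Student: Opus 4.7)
The strategy is to substitute the explicit Riccati perturbation bound from Proposition~\ref{prop:93} into Theorem~\ref{thm:lqg_subopt}, and then to verify that the single smallness hypothesis $\varepsilon \leq (1-\gamma^2)^2/[\tau^4(N_\star,\gamma)\Gamma_\star^{11}\norm{Q}]$ is strong enough to activate every intermediate hypothesis and to absorb the higher-order terms that appear along the way.

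First, I would unpack what perturbation is actually being fed into the Riccati bound. The LQR subproblem inside the certainty equivalent controller \eqref{eq:lqg_CE} uses cost matrix $\Ch^\T Q \Ch$ rather than $T^{-\T}\Cstar^\T Q \Cstar T^{-1}$, and writing $\Ch = \Cstar T^{-1} + E$ with $\norm{E} \leq \varepsilon$ together with a triangle inequality and unitarity of $T$ shows that $\norm{\Ch^\T Q \Ch - T^{-\T}\Cstar^\T Q \Cstar T^{-1}}$ is bounded by $3\norm{\Cstar}^2_+\norm{Q}_+\varepsilon$ once $\varepsilon \leq 1$. This explains the precise argument of $f$ inside the definition of $\overline{\varepsilon}$ in Theorem~\ref{thm:lqg_subopt}. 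I would then invoke Proposition~\ref{prop:93} on the two Riccati equations with parameter tuples $(\Ah,\Bh,\Ch^\T Q \Ch, R)$ and $(T\Astar T^{-1}, T\Bstar, T^{-\T}\Cstar^\T Q \Cstar T^{-1}, R)$ to obtain a linear-in-perturbation bound $f(\gamma)\leq C_\star\gamma$, where $C_\star$ is polynomial in $\Gamma_\star$, $\tau(N_\star,\gamma)$, and $(1-\gamma^2)^{-1}$ with exponents that can be read off of the statement of Proposition~\ref{prop:93}.

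Substituting this linear bound into $\overline{\varepsilon}$ shows that $\overline{\varepsilon}$ is itself linear in $\varepsilon$, with a multiplicative factor of the form $C'_\star \norm{Q}/\smin(R)$ where $C'_\star$ is again polynomial in $\Gamma_\star$ and $\tau(N_\star,\gamma)/(1-\gamma^2)$. The sub-optimality gap in Theorem~\ref{thm:lqg_subopt} is quadratic in $\overline{\varepsilon}$ and multiplies it by the additional factor $\tau^6(N_\star,\gamma)/(1-\gamma^2)^3 \cdot \Gamma_\star^6$, so compounding exponents yields exactly the claimed prefactor $\Gamma_\star^{26}\tau^{10}(N_\star,\gamma)/(1-\gamma^2)^5 \cdot \norm{Q}^2/\smin(R)^2$. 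To close the argument I would verify that the quantitative hypothesis $\varepsilon \leq (1-\gamma^2)^2/[\tau^4(N_\star,\gamma)\Gamma_\star^{11}\norm{Q}]$ enforces simultaneously all three smallness conditions appearing above, namely $3\norm{\Cstar}_+\norm{Q}_+\varepsilon \leq \gamma_0$ (the radius of validity of Proposition~\ref{prop:93}), $\overline{\varepsilon}\leq 1$, and $\overline{\varepsilon}\leq (1-\gamma)/[20\Gamma_\star\tau(N_\star,\gamma)]$ (the scope of Theorem~\ref{thm:lqg_subopt}); each of these reduces after the linear substitution for $f$ to a polynomial inequality in $\varepsilon$ that is implied by the stated bound.

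The main obstacle here is bookkeeping rather than any new idea. The chain of implications passes through the perturbation of the effective state-cost matrix, the Riccati perturbation from Proposition~\ref{prop:93}, the controller perturbation from Lemma~\ref{lem:K_perturb_bound}, and finally the second-order LQG cost expansion implicit in Theorem~\ref{thm:lqg_subopt}; each link multiplies the error by its own polynomial in $\Gamma_\star$, $\tau(N_\star,\gamma)$, and $1/(1-\gamma^2)$. Verifying that the exponents compound to exactly $26$ and $10$ in the stated bound, and that the single quantitative smallness hypothesis on $\varepsilon$ suffices to validate every intermediate requirement along the chain at once, is the delicate part of the argument.
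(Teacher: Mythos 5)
Your proposal is correct and follows the same route as the paper: the paper's proof of this theorem is precisely the instantiation of the abstract perturbation hypothesis \eqref{eq:dare_perturbation_assumption} in Theorem~\ref{thm:lqg_subopt} with the explicit linear bound from Proposition~\ref{prop:93}, applied to the Riccati pair $\mathsf{dare}(T\Astar T^{-1}, T\Bstar, T^{-\T}\Cstar^\T Q\Cstar T^{-1}, R)$ versus $\mathsf{dare}(\Ah,\Bh,\Ch^\T Q\Ch, R)$, followed by the same constant bookkeeping you describe (and your exponent accounting $6+20=26$, $6+4=10$, $3+2=5$ checks out).
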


Several remarks are in order. First, the assumption that $\smin(P_\star) \geq 1$ is 
without loss of generality, since we can always rescale $Q$ and $R$ without affecting the control solution.
Next, we compare our results here to a classic result from \citet{doyle78}, which states that
there are no gain margins for LQG. We remark that the notion of a gain margin is 
a robustness property that holds uniformly over a class of perturbations of varying degree.
Our results do not hold uniformly; we use quantities such as $\tau(N_\star, \gamma)$ and $\Gamma_\star$
to quantify how much mismatch a given LQG instance can tolerate.


\section{Riccati Perturbation Theory}
\label{sec:riccati}



As discussed in Sections~\ref{sec:main} and \ref{sec:lqg}, a key piece of our analysis
is bounding the solutions to discrete Riccati equations as we perturb the problem parameters.
Specifically, we are interested in 
quantities $b, L$ such that $\norm{\Phat - \Pstar} \leq L\varepsilon$ if $\varepsilon < b$,
where $\varepsilon$ represents a bound on the perturbation. 
We note that it is not possible to find universal values $b, L$.
Consider the systems $(\Astar, \Bstar) = (1, \varepsilon)$ and
$(\Ah, \Bh) = (1, 0)$; the latter system is not stabilizable and hence $\Phat$ does not even exist.
Therefore, $b$ and $L$ must depend on the system parameters.


While there is a long line of work analyzing perturbations of Riccati equations, we are not aware of any result that offers explicit and easily interpretable $b$ and $L$ for a fixed $(\Astar, \Bstar, Q, R)$; see \citet{konstantinov2003perturbation} for an overview of this literature.
In this section, we present two new results for Riccati perturbation which offer interpretable bounds.
The first one expands upon the operator-theoretic proof of \citet{konstantinov1993perturbation}; its proof can be found in Section~\ref{sec:proof_93}.
In this result we assume the cost matrix $Q$ can also be perturbed, which is needed for our LQG guarantee. In order to be consistent we denote the true cost matrix by $Q_\star$ and the estimated one by $\Qh$. 

\begin{myprop}
  \label{prop:93}
  Let $\gamma \geq \rho(\Lstar)$ and also let $\varepsilon$ such that $\norm{\Ahat - \Astar}$, $\norm{\Bhat - \Bstar}$, and $\norm{\Qh - Q_\star}$ are at most $\varepsilon$. Let $\norm{\cdot}_+ = \norm{\cdot} + 1$. We assume that $R \succ 0$, $(\Astar, \Bstar)$ is stabilizable, $(Q^{1/2}, \Astar)$ observable, and $\smin(\Pstar) \geq 1$.  
 \begin{align*}
\norm{\Phat - \Pstar} \leq \Ocal(1)\, \varepsilon \, \frac{\transient{\Lstar}{\gamma}^2}{1 - \gamma^2} \norm{\Astar}_+^2 \norm{\Pstar}_+^2 \norm{\Bstar}_+ \norm{R^{-1}}_+,
 \end{align*}
as long as
  \begin{align*}
  \varepsilon \leq \Ocal(1) \frac{(1 - \gamma^2)^2}{\transient{\Lstar}{\gamma}^4} \norm{\Astar}_+^{-2} \norm{\Pstar}_+^{-2} \norm{\Bstar}_+^{-3} \norm{R^{-1}}_+^{-2} \min \left\{ \norm{\Lstar}_+^{-2}, \norm{\Pstar}_+^{-1} \right\}.
  \end{align*}
\end{myprop}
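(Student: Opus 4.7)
The plan is to follow the operator-theoretic strategy of \citet{konstantinov1993perturbation}: recast the perturbed DARE as a fixed-point equation in $\Delta := \Phat - \Pstar$ and apply Banach's contraction mapping theorem, while carefully tracking the explicit constants that will appear in the final bound.

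Let $F_{A,B,Q}(P) := A^\top P A - A^\top P B (R + B^\top P B)^{-1} B^\top P A + Q$, so that the DARE reads $P = F_{A,B,Q}(P)$. A direct calculation --- cleanest via the equivalent form $F_{A,B,Q}(P) = (A+BK(P))^\top P (A+BK(P)) + K(P)^\top R K(P) + Q$ with $K(P) = -(R+B^\top P B)^{-1} B^\top P A$, where the first-order contribution of the variation of $K(P)$ vanishes by an envelope argument --- gives the Fréchet derivative
\begin{align*}
D F_{\Astar,\Bstar,Q_\star}(\Pstar)[\Delta] = \Lstar^\top \Delta \Lstar \:.
\end{align*}
This motivates the Lyapunov operator $\calL(\Delta) := \Delta - \Lstar^\top \Delta \Lstar$, which is invertible because $\rho(\Lstar) \leq \gamma < 1$; its inverse $\calL^{-1}(Y) = \sum_{k \geq 0} (\Lstar^\top)^k Y \Lstar^k$ satisfies
\begin{align*}
\norm{\calL^{-1}}_{\mathrm{op}} \leq \sum_{k \geq 0} \norm{\Lstar^k}^2 \leq \frac{\transient{\Lstar}{\gamma}^2}{1-\gamma^2} \:.
\end{align*}

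Writing $\Phat = \Pstar + \Delta$ and rearranging the perturbed DARE $\Phat = F_{\Ahat,\Bhat,\Qh}(\Phat)$ separates the linear part from a nonlinear remainder and a parameter-perturbation term:
\begin{align*}
\calL(\Delta) = R(\Delta) + E(\Delta), \qquad R(\Delta) := F_{\Astar,\Bstar,Q_\star}(\Pstar+\Delta) - \Pstar - \Lstar^\top \Delta \Lstar \:,
\end{align*}
with $E(\Delta) := F_{\Ahat,\Bhat,\Qh}(\Pstar+\Delta) - F_{\Astar,\Bstar,Q_\star}(\Pstar+\Delta)$. Applying $\calL^{-1}$ produces the fixed-point map $\Psi(\Delta) := \calL^{-1}(R(\Delta)+E(\Delta))$. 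Using the resolvent identity on $(R + \Bstar^\top (\Pstar+\Delta) \Bstar)^{-1}$ to isolate the quadratic-and-higher terms in $\Delta$ yields an estimate of the shape $\norm{R(\Delta)} \lesssim \norm{\Astar}_+^2 \norm{\Bstar}_+^2 \norm{R^{-1}}_+ \norm{\Pstar}_+ \norm{\Delta}^2$, together with a matching Lipschitz bound, valid on a ball of radius $\asymp \norm{\Bstar}_+^{-2}\norm{R^{-1}}_+^{-1}\norm{\Pstar}_+^{-1}$. A similar but longer expansion of the difference of two rational matrix functions gives $\norm{E(\Delta)} \lesssim \varepsilon \cdot \norm{\Astar}_+^2 \norm{\Pstar}_+^2 \norm{\Bstar}_+ \norm{R^{-1}}_+$ uniformly on the same ball.

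Applying Banach's theorem on $\{\Delta : \norm{\Delta} \leq r\}$ with $r$ chosen to be a small multiple of $\varepsilon \cdot \frac{\transient{\Lstar}{\gamma}^2}{1-\gamma^2} \cdot \norm{\Astar}_+^2 \norm{\Pstar}_+^2 \norm{\Bstar}_+ \norm{R^{-1}}_+$, the smallness hypothesis on $\varepsilon$ in the statement is precisely what ensures both $\Psi$ maps this ball into itself and is a strict contraction on it. By uniqueness of the stabilizing DARE solution, the unique fixed point must coincide with $\Phat - \Pstar$, which delivers the advertised bound. The main obstacle is the algebraic bookkeeping inside $R(\Delta)$ and especially $E(\Delta)$: their expansions produce many monomials in $\Astar, \Bstar, \Pstar, R^{-1}$ multiplied by a power of $\varepsilon$ or $\norm{\Delta}$, and extracting the \emph{tight} exponents advertised in the statement --- rather than a looser product --- is where the real work lies. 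The normalization $\smin(\Pstar) \geq 1$ is convenient here because it lets us absorb lower powers of $\norm{\Pstar}$ into higher ones, simplifying the consolidation of constants.
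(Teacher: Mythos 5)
Your proposal follows essentially the same route as the paper: both recast the perturbed DARE as a fixed-point equation for $\Delta = \Phat - \Pstar$, split the residual into the Stein operator $X \mapsto X - \Lstar^\top X \Lstar$ (whose inverse is bounded by $\transient{\Lstar}{\gamma}^2/(1-\gamma^2)$), a quadratic remainder, and a parameter-perturbation term, and then run Banach's contraction argument on a ball whose radius is proportional to the advertised bound, matching the paper's choice of $\nu$ and its Lemma~\ref{lem:operator_93} estimates in form. The only detail you gloss over is that the paper restricts the contraction to the set of symmetric $X$ with $\Pstar + X \succeq 0$ (which is where $\smin(\Pstar) \geq 1$ is actually used, to keep the iterates in that set) so that the fixed point can be identified with the unique positive semidefinite DARE solution, but this is a minor bookkeeping point rather than a gap in the approach.
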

We note that the assumption $\smin(\Pstar) \geq 1$ can be made without loss of generality when the other assumptions are satisfied. When $R \succ 0$ and $(Q^{1/2}, A)$ observable, the value function matrix $\Pstar$ is guaranteed to be positive definite. Then, by rescalling $Q$ and $R$ we can ensure that $\smin(\Pstar) \geq 1$.

We now present our direct approach, which uses Assumption~\ref{ass:controllable} to give a bound which is sharper for some systems $(\Astar, \Bstar)$ then the one provided by Proposition~\ref{prop:93}. Recall that any controllable system is always $(\ell, \lbsmin)$-controllable for some $\ell$ and $\lbsmin$.
\begin{myprop}
\label{prop:lipschitz_dare}
Let $\rho \geq \rho(\Astar)$ and also let $\varepsilon \geq 0$ such that  $\|\Ahat - \Astar\| \leq \varepsilon$ and $\|\Bhat - \Bstar\| \leq \varepsilon$. Let $\beta := \max\{1, \varepsilon \transient{\Astar}{\rho} + \rho\}$. Under Assumptions~\ref{ass:strg_cvx} and \ref{ass:controllable} we have
\begin{align*}
  \|\widehat{P} - \Pstar \|  \leq 32 \, \varepsilon \, \ell^{\frac{5}{2}} \transient{\Astar}{\rho}^3  \beta^{2(\ell - 1)}  \left( 1 + \frac{1}{\lbsmin} \right) ( 1 + \|\Bstar \| )^{2} \|\Pstar \|  \frac{\max \{\|Q\| ,  \|R\| \} }{\min \{ \smin(R) , \underline{\sigma}(Q) \} },
\end{align*}
as long as $\varepsilon$ is small enough so that the right hand side is smaller or equal than one.
\end{myprop}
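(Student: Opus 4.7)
The plan is to use the variational (optimal-control) characterization of the Riccati solutions and compare them by transferring control sequences between the true and nominal systems. For any stabilizable $(A,B)$, Bellman optimality gives, for any horizon $\ell \geq 1$,
\[
x^\top P\, x \;=\; \min_{u_{0:\ell-1}} \sum_{t=0}^{\ell-1} \bigl(x_t^\top Q x_t + u_t^\top R u_t\bigr) \;+\; x_\ell^\top P\, x_\ell,
\]
where $P = \mathsf{dare}(A,B,Q,R)$, $x_0 = x$, and $x_{t+1}=Ax_t+Bu_t$. This finite-horizon lookahead plus a terminal $P$-value identity is the key tool I would use for both directions of the inequality.

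For the direction $\widehat{P} - \Pstar \preceq \Ocal(\varepsilon)$, I would apply the true-optimal feedback $u_t = \Kstar x_t$ to the nominal system. Since $\Lstar = \Astar + \Bstar\Kstar$ is stable and $\norm{(\widehat{A}+\widehat{B}\Kstar)-\Lstar} = \Ocal(\varepsilon)$, Lemma~\ref{lem:lipschitz_matrix_powers} ensures the nominal closed-loop matrix remains stable for small $\varepsilon$. The resulting closed-loop Lyapunov cost on the nominal system upper-bounds $\widehat{P} x \cdot x$, and subtracting $\Pstar x \cdot x$ leaves a standard Lyapunov-equation perturbation of order $\varepsilon$.

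For the harder reverse direction $\Pstar - \widehat{P} \preceq \Ocal(\varepsilon)$, I would exploit $(\ell,\lbsmin)$-controllability of the true system to construct an explicit stabilizing strategy for the true system whose cost is close to $\widehat{P}$. Starting at $x$, apply the nominal-optimal open-loop $\ell$-step controls $\widehat{\mathbf{u}}^\star$; on the true system these leave a residual state $x_\ell = (\Astar^\ell-\widehat{A}^\ell)x + (\Ccal_\ell - \widehat{\Ccal}_\ell)\widehat{\mathbf{u}}^\star$ whose norm is $\Ocal(\varepsilon\,\ell\,\transient{\Astar}{\rho}\,\beta^{\ell-1})\norm{x}$ by Lemma~\ref{lem:lipschitz_matrix_powers}. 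Then follow with the true-system $\ell$-step drive-to-zero sequence $-\Ccal_\ell^\dagger \Astar^\ell x_\ell$, whose energy is $\Ocal(\norm{\Astar^\ell x_\ell}/\lbsmin)$ and which clears the residual to zero exactly. The total cost of this composite policy on the true system upper-bounds $\Pstar x \cdot x$; comparing stage-by-stage to the nominal cost $\widehat{P} x \cdot x$, the per-step discrepancy is $\Ocal(\varepsilon)$ and, when summed over $\ell$ steps with appropriate handling of the drive-to-zero's terminal contribution $x_\ell^\top \Pstar x_\ell$, produces the $\transient{\Astar}{\rho}^3\beta^{2(\ell-1)}$ factor, the $(1+1/\lbsmin)$ factor, and the $\ell^{5/2}$ polynomial factor.

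The main technical obstacle will be the careful bookkeeping of perturbations over the $\ell$-step horizon: simultaneously tracking (i) trajectory perturbations $\norm{x_t-\widehat{x}_t}$ via Lemma~\ref{lem:lipschitz_matrix_powers} applied to matrix powers of $\Astar$, (ii) the perturbation of $\Ccal_\ell^\dagger$, which requires showing that $\widehat{\Ccal}_\ell$ remains well-conditioned (i.e.\ $\smin(\widehat{\Ccal}_\ell) \geq \lbsmin/2$), and (iii) the interaction between stage costs in $Q$ and input costs in $R$, which produces the condition-number factor $\max\{\norm{Q},\norm{R}\}/\min\{\smin(Q),\smin(R)\}$ when relating control-energy bounds to cost bounds. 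The smallness assumption on $\varepsilon$ in the proposition ensures both that the perturbed controllability matrix is well-conditioned and that implicit terms involving $\norm{\widehat{P}}$ in intermediate estimates can be absorbed into the dominant $\norm{\Pstar}$ prefactor.
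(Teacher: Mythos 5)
Your construction for the direction $\Pstar \preceq \Phat + \calO(\varepsilon)$ is essentially the paper's argument: exploit $(\ell,\lbsmin)$-controllability to superpose a correcting input on the reference controls so that the two trajectories coincide after $\ell$ steps, control the input and state discrepancies via Lemma~\ref{lem:lipschitz_matrix_powers} and the bound $\smin(\widehat{\Ccal}_\ell)\geq \lbsmin/2$, and convert the accumulated discrepancy into the stated bound through the energy inequality $\min\{\smin(Q),\smin(R)\}\sum_t(\|\vecx_t\|^2+\|\vecu_t\|^2)\leq \|\Pstar\|$, which is exactly where the cost condition number enters. Two points need repair, however. First, your treatment of the other direction ($\Phat \preceq \Pstar + \calO(\varepsilon)$) by playing $\Kstar$ on $(\Ahat,\Bhat)$ and invoking a Lyapunov perturbation produces a bound governed by $\transient{\Lstar}{\gamma}$ and $(1-\gamma^2)^{-1}$; these quantities do not appear in the right-hand side of the proposition and cannot in general be dominated by the quantities that do, so this route proves a different (incomparable) estimate rather than the stated one. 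The paper instead runs the \emph{same} tracking argument with the roles of $(\Astar,\Bstar)$ and $(\Ahat,\Bhat)$ exchanged, which yields the identical form of bound with $\|\Pstar\|$ replaced by $\|\Phat\|$, and then absorbs this via $\|\Phat\|\leq 2\|\Pstar\|$ (a consequence of the first direction together with the smallness assumption on $\varepsilon$ and $\Pstar\succeq I$) --- precisely the absorption step you anticipate.

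Second, your bookkeeping for the tracking direction is organized as a single pass of $\ell$ tracking steps followed by $\ell$ drive-to-zero steps with a terminal term $x_\ell^\T \Pstar x_\ell$; this does not close, because the mismatch between the two systems regenerates on every subsequent block. The correction must be folded into \emph{each} length-$\ell$ block (choose $\widehat{\vecu}^{(\ell)}_j-\vecu^{(\ell)}_j$ orthogonal to the nullspace of the controllability matrix so that the states agree at every time $\ell j$), after which the per-block errors are summed over $j=0,1,2,\dots$ and telescoped against the total trajectory energy. With that reorganization, and with the second direction argued symmetrically, your proof matches the paper's.
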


The proof of this result is deferred to Section~\ref{sec:proof_dare}.
We note that Proposition~\ref{prop:lipschitz_dare} can also be extended to handle perturbations in the cost matrix $Q$, as we describe in the proof. 
Proposition~\ref{prop:lipschitz_dare} requires an $(\ell,\lbsmin)$-controllable
system $(\Astar, \Bstar)$, whereas Proposition~\ref{prop:93} only requires
a stabilizable system, which is a milder assumption. However,
Proposition~\ref{prop:lipschitz_dare} can offer a sharper guarantee. For
example, consider the linear system with two dimensional states ($\statedim=2$)
given by $\Astar = 1.01 \cdot I_2$ and $\Bstar = \begin{bmatrix} 1 & 0\\ 0 &
\beta \end{bmatrix}$. Both $Q$ and $R$ are chosen to be the
identity matrix $I_2$.
This system $(\Astar, \Bstar)$ is readily checked to be $(1, \beta)$-controllable.
%
It is also straightforward to verify that as $\beta$ tends to zero,
Proposition~\ref{prop:93}
gives a bound of $\norm{\Phat-\Pstar} = \Ocal(\varepsilon / \beta^4)$,
whereas Proposition~\ref{prop:lipschitz_dare}
gives a sharper bound of $\norm{\Phat-\Pstar} = \Ocal(\varepsilon / \beta^3)$.

\subsection{Proof of Proposition~\ref{prop:93}}
\label{sec:proof_93}

Given parameters $(A,B, Q)$ ($R$ is assumed fixed throughout; $Q$ is assumed positive semidefinite throughout) we denote by $F(X, A, B, Q)$ the matrix expression
\begin{align}
  F(X, A, B, Q) &= X - A^\top X A + A^\top X B (R + B^\top X B)^{-1} B^\top X A - Q \nonumber \\
             &= X - A^\top X\left( I + B R^{-1} B^\top X  \right)^{-1}A - Q \:. \label{eq:riccati_op_short}
\end{align}
Then, solving the Riccati equation associated with $(A, B, Q)$ corresponds to finding the unique positive definite matrix $X$ such that $F(X, A, B, Q) = 0$. We denote by $\Pstar$ the solution of the Riccati equation corresponding to the true system parameters $(\Astar, \Bstar)$ and we denote by $\Phat$ the solution associated with $(\Ahat, \Bhat, \Qh)$. Our goal is to upper bound $\norm{\Phat - \Pstar}$ in terms of $\varepsilon$, where $\varepsilon > 0$ such that $\norm{\Ahat - \Astar} \leq \varepsilon$, $\norm{\Bhat - \Bstar} \leq \varepsilon$, and  $\norm{\Qh - Q_\star} \leq \varepsilon$.

We denote $\Delta_P = \Phat - \Pstar$. The proof strategy goes as follows. Given the identities $F(\Pstar, \Astar, \Bstar, Q_\star) = 0$ and $F(\Phat, \Ahat, \Bhat, \Qh) = 0$ we construct an operator $\opPhi$ such that $\Delta_P$ is its unique fixed point. Then, we show that the fixed point of $\opPhi$ must have small norm when $\varepsilon$ is sufficiently small.

We denote $S_\star = \Bstar R^{-1} \Bstar^\top$ and $\Shat = \Bhat R^{-1} \Bhat^\top$. Also, recall that $L_\star = \Astar + \Bstar \Kstar$. For any matrix $X$ such that $I + \Sstar(\Pstar + X)$ is invertible we have
\begin{align}
  \label{eq:magical}
F(\Pstar + X, \Astar, \Bstar, Q_\star) &= X - L_\star^\top X L_\star + L_\star^\top X \left[I + S_\star (\Pstar + X)\right]^{-1} S_\star X L_\star.
\end{align}
To check this identity one needs to add  $F(\Pstar, \Astar, \Bstar, Q_\star)$, which is equal to zero, to the right hand side of \eqref{eq:magical} and use the identity $(I + \Bstar R^{-1} \Bstar^\top \Pstar)^{-1} \Astar = \Astar + \Bstar \Kstar$. This last identity can be checked by recalling $\Kstar = - (R + \Bstar^\top \Pstar \Bstar)^{-1} \Bstar^\top \Pstar \Astar$ and using the matrix inversion formulat.

To write \eqref{eq:magical} more compactly we define the following two matrix operators
\begin{align*}
\opT(X) = X - L_\star^\top X L_\star \;\text{ and }\; \opH(X) = L_\star^\top X \left(I + S_\star(\Pstar + X)\right)^{-1} S_\star X L_\star.
\end{align*}
Then, Equation~\ref{eq:magical} becomes $F(\Pstar + X, \Astar, \Bstar, Q_\star) = \opT(X) + \opH(X)$.
Since Equation~\ref{eq:magical} is satisfied by any matrix $X$ with $I + S_\star(\Pstar+X)$ invertible, the matrix equation
\begin{align}
  \label{eq:fixed_point_eq}
  F(\Pstar + X, \Astar, \Bstar, Q_\star) - F(\Pstar + X, \Ahat, \Bhat, \Qh) = \opT(X) + \opH(X)
\end{align}
 has a unique symmetric solution $X$ such that $\Pstar + X \succeq 0$.
That solution is $X = \Delta_P$ because any solution of \eqref{eq:fixed_point_eq} must satisfy $F(\Pstar + X, \Ahat, \Bhat, \Qh) = 0$.

The linear map $\opT \colon X \mapsto X - \Lstar^\top X \Lstar$ has eigenvalues equal to $1 - \lambda_i \lambda_j$, where $\lambda_i$ and $\lambda_j$ are eigenvalues of the closed loop matrix $\Lstar$. Since $\Lstar$ is a stable matrix, the linear map $\opT$ must be invertible. Now, we define the operator
\begin{align*}
  \opPhi(X) &= \opT^{-1} \left(F(\Pstar + X, \Astar, \Bstar, Q_\star) - F(\Pstar + X, \Ahat, \Bhat, \Qh) - \opH(X)\right).
\end{align*}
Then, solving for $X$ in Equation~\ref{eq:fixed_point_eq} is equivalent to finding $X$ satisfying $\Pstar + X \succeq 0$ such that $X = \opPhi(X)$. Hence, $\opPhi$ has a unique symmetric fixed point $X$ such that $\Pstar + X \succeq 0$ and that is $X = \Delta_P$. Now, we consider the set
\begin{align*}
\calS_\nu := \left\{X \colon \norm{X} \leq \nu \text{, } X = X^\top\text{, } \Pstar + X \succeq 0 \right\}
\end{align*}
and we show that for an appropriately chosen $\nu$ the operator $\opPhi$ maps $\calS_\nu$ into itself and is also a contraction over the set $\calS_\nu$. If we show these two properties, $\opPhi$ is guaranteed to have a fixed point in the set $\calS_\nu$. However, since $\Delta_P$ is the only possible fixed point of $\opPhi$ in a set $\calS_\nu$ we find $\norm{\Delta_P} \leq \nu$.

We denote $\Delta_A = \Ahat - \Astar$, $\Delta_B = \Bhat - \Bstar$, $\Delta_Q = \Qh - Q_\star$, and $\Delta_S = \Shat - \Sstar$. By assumption we have $\norm{\Delta_A} \leq \varepsilon$,  $\norm{\Delta_B} \leq \varepsilon$, $\norm{\Delta_Q} \leq \varepsilon$. Then, $\norm{\Delta_S} \leq 3 \norm{\Bstar} \norm{R^{-1}}\varepsilon$ because  $\varepsilon \leq \norm{\Bstar}$. 
\begin{mylemma}
  \label{lem:operator_93}
  Suppose the matrices $X$, $X_1$, $X_2$ belong to $\calS_\nu$, with $\nu \leq \min \{ 1, \norm{\Sstar}^{-1}\}$. Furthermore, we assume that $\norm{\Delta_A} \leq \varepsilon$, $\norm{\Delta_B} \leq \varepsilon$, and $\norm{\Delta_Q} \leq \varepsilon$ with $\varepsilon \leq \min \{1, \norm{\Bstar} \}$. Finally, let $\smin(\Pstar) \geq 1$. Then
  \begin{align*}
    \norm{\opPhi(X)} &\leq 3 \frac{\transient{\Lstar}{\gamma}^2}{1 - \gamma^2} \left[\norm{\Lstar}^2 \norm{\Sstar} \nu^2 +  \varepsilon \norm{\Astar}_+^2 \norm{\Pstar}^2_+ \norm{\Bstar}_+ \norm{R^{-1}}_+  \right],\\
    \norm{\opPhi(X_1) - \opPhi(X_2)} &\leq 32 \frac{\transient{\Lstar}{\gamma}^2}{1 - \gamma^2} \left[ \norm{\Lstar}^2 \norm{\Sstar} \nu + \varepsilon \norm{\Astar}_+^2 \norm{\Pstar}^3_+ \norm{\Bstar}^3_+ \norm{R^{-1}}^2_+  \right] \norm{X_1 - X_2}.
  \end{align*}
\end{mylemma}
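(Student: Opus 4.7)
The plan is to exploit the factorization $\opPhi = \opT^{-1}\circ(\Delta F - \opH)$, where $\Delta F(X) := F(\Pstar+X,\Astar,\Bstar,Q_\star) - F(\Pstar+X,\Ahat,\Bhat,\Qh)$, and to bound each piece in this composition separately. For the outermost factor, the stability of $\Lstar$ together with $\rho(\Lstar)\leq \gamma<1$ yields the Neumann expansion $\opT^{-1}(Y) = \sum_{k\geq 0}(\Lstar^\top)^k Y \Lstar^k$. Applying the triangle inequality and the definition of $\tau$ gives $\norm{\opT^{-1}(Y)} \leq \transient{\Lstar}{\gamma}^2 (1-\gamma^2)^{-1}\norm{Y}$, which is the common prefactor in both of the claimed inequalities.

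For the first inequality, I would bound $\Delta F(X)$ and $\opH(X)$ separately. Using the compact form \eqref{eq:riccati_op_short}, substituting $\Ahat = \Astar + \Delta_A$ and $\Shat = \Sstar+\Delta_S$, and applying the resolvent identity $M^{-1}-N^{-1}=M^{-1}(N-M)N^{-1}$ to $M = I+\Shat(\Pstar+X)$ and $N = I+\Sstar(\Pstar+X)$ decomposes $\Delta F(X)$ into a sum of terms each linear in one of $\Delta_A$, $\Delta_S$, $\Delta_Q$ (modulo $O(\varepsilon^2)$ corrections). Taking spectral norms and using $\norm{\Delta_S}\leq 3\norm{\Bstar}\norm{R^{-1}}\varepsilon$, $\norm{\Pstar+X}\leq\norm{\Pstar}_+$, and a uniform bound on $\norm{(I+\Sstar(\Pstar+X))^{-1}}$ produces the $\varepsilon$-linear term in the stated inequality. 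The quadratic piece $\opH(X) = \Lstar^\top X(I+\Sstar(\Pstar+X))^{-1}\Sstar X \Lstar$ is explicitly quadratic in $X$, hence bounded by $\norm{\Lstar}^2\norm{\Sstar}\nu^2$ times the resolvent bound, yielding the $\nu^2$ term. Combining with the $\opT^{-1}$ bound gives the first inequality.

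For the Lipschitz bound, I would differentiate $\Delta F$ and $\opH$ in $X$ along $Z := X_1 - X_2$ and then integrate. The derivative of $\opH$ is a sum of three terms (two from the product rule, one from the derivative of the resolvent), each of size $\norm{\Lstar}^2\norm{\Sstar}\nu\norm{Z}$ times the resolvent bound, producing the $\norm{\Lstar}^2\norm{\Sstar}\nu$ contribution. The directional derivative of $\Delta F$ remains $O(\varepsilon)$: one factor of $X$ inside $\Delta F$ is replaced by $Z$, while differentiating the resolvent brings in at most one additional factor of $\norm{\Sstar}\norm{R^{-1}}_+$, accounting for the extra $\norm{\Pstar}_+\norm{\Bstar}_+^2\norm{R^{-1}}_+$ relative to the first inequality and giving the coefficient $\norm{\Astar}_+^2\norm{\Pstar}_+^3\norm{\Bstar}_+^3\norm{R^{-1}}_+^2$. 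Pushing through $\opT^{-1}$ and collecting constants yields the second inequality with constant $32$. The main obstacle will be uniformly bounding $\norm{(I+\Sstar(\Pstar+X))^{-1}}$ over $X\in\calS_\nu$: this uses that $\Sstar(\Pstar+X)$ is similar to a PSD matrix (since $\Pstar+X\succeq 0$ under $\smin(\Pstar)\geq 1$ and $\nu\leq 1$, and $\Sstar\succeq 0$), combined with a small Neumann perturbation around the true-system resolvent $(I+\Sstar\Pstar)^{-1}$ to transfer the bound from $\Pstar$ to $\Pstar+X$; the remainder of the argument is essentially term-by-term linearization and the triangle inequality.
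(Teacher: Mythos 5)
Your proposal mirrors the paper's proof essentially step for step: the same decomposition $\opPhi = \opT^{-1}\circ(\Delta F - \opH)$, the same Neumann-series bound $\norm{\opT^{-1}} \leq \transient{\Lstar}{\gamma}^2/(1-\gamma^2)$, the same resolvent-identity expansion of the difference of Riccati operators into terms linear in $\Delta_A$, $\Delta_S$, $\Delta_Q$, the same quadratic-in-$\nu$ bound on $\opH$, and a term-by-term Lipschitz estimate for the second inequality. The one obstacle you flag — uniformly controlling $(I+\Sstar(\Pstar+X))^{-1}$ over $\calS_\nu$ — is resolved in the paper without any Neumann perturbation, via the elementary fact that $\norm{N(I+MN)^{-1}}\leq\norm{N}$ for PSD $M,N$ (so that the resolvent is always paired with a factor of $\Pstar+X$, and the standalone bound $\norm{(I+\Sstar(\Pstar+X))^{-1}}\leq\norm{(\Pstar+X)^{-1}}\,\norm{\Pstar+X}$ follows), which is essentially the similarity-to-PSD manipulation you sketch.
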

The proof of this lemma is defered to Appendix~\ref{app:lem_93}. Now, we choose
\begin{align}
  \label{eq:nu_choice}
  \nu = 6 \, \varepsilon \, \frac{\transient{\Lstar}{ \gamma}^2}{1 - \gamma^2} \norm{\Astar}_+^2 \norm{\Pstar}_+^2\norm{\Bstar}_+ \norm{R^{-1}}_+.
\end{align}
Since $\varepsilon$ is assumed to be small enough, we know
\begin{align*}
\nu  \leq \min \left\{ \frac{1 - \gamma^2}{128 \transient{\Lstar}{\gamma}^2 \norm{\Lstar}^2 \norm{\Sstar}}, \norm{\Sstar}^{-1},  \frac{1}{2} \right\}.
\end{align*}
Then, the operator $\opPhi$ satisfies $\norm{\opPhi(X_1) - \opPhi(X_2)} \leq \frac{1}{2}\norm{X_1 - X_2}$ for all $X_1$ and $X_2$ in $\calS_\nu$. Moreover, we have $\norm{\opPhi(X)} \leq \nu$ for all $X\in \calS_\nu$. Since $\nu \leq \smin(\Pstar)$, we know that $\Pstar + \opPhi(X) \succeq 0$ 

Therefore, $\opPhi$ maps $\calS_\nu$ into itself and is a contraction over $\calS_\nu$. Hence, $\opPhi$ has a fixed point in $\calS_\nu$ since $\calS_\nu$ is a closed set. However, we already argued that the unique fixed point of $\opPhi$ is $\Delta_P$. Therefore, $\Delta_P \in \calS_\nu$  and $\norm{\Delta_P} \leq \nu$. Proposition \ref{prop:93} is now proven.

\subsection{Proof of Proposition~\ref{prop:lipschitz_dare}}
\label{sec:proof_dare}

Since both noisy and noiseless LQR have the same associated Riccati equation and the same optimal controller, we can focus on the noiseless case in this section. Namely, noiseless LQR takes the form
\begin{align*}
  \min_{\vecu} \sum_{t = 0}^\infty \vecx_t^\top Q \vecx_t + \vecu_t^\top R \vecu_t \; \text{, where } \; \vecx_{t + 1} = \Astar \vecx_t + \Bstar \vecu_t,
\end{align*}
for a given initial state $\vecx_0$. Then, we know that the cost achieved by the optimal controller
when the system is initialized at $\vecx_0$ is equal to $\vecx_0^\top \Pstar \vecx_0$.

We denote by
$J(A, B, \vecx_0, \{\vecu_t \}_{t \geq 0})$ the cost achieved on a
linear system $(A, B)$ initialized at $\vecx_0$ by the input sequence $\{ \vecu_t\}_{t \geq 0}$. When the input sequence is
given by a time invariant linear gain matrix $K$ we slightly abuse notation and denote the cost by $J(A, B, \vecx_0, K)$. In this case, $J(A, B, \vecx_0, K)= \vecx_0^\top P \vecx_0$, where $P$ is the solution to the associated Riccati equation.

Now, let $\vecx_0$ be an arbitrary unit state vector in $\R^\statedim$. Then,
\begin{align*}
  \vecx_0^\top \Phat \vecx_0 - \vecx_0^\top \Pstar \vecx_0 &= J(\Ahat, \Bhat, \vecx_0, \Khat) - J(\Astar, \Bstar, \vecx_0, \Kstar) \\
                                                           &\leq J(\Ahat, \Bhat, \vecx_0, \{ \widehat{\vecu}_t\}_{t \geq 0}) - J(\Astar, \Bstar, \vecx_0, \Kstar)
\end{align*}
for any sequence of inputs $\{\widehat{\vecu}_{t} \}_{t \geq 0}$. We denote by $\hat{\vecx}_t$ the states produced by
$\widehat{\vecu}_t$ on the system $(\Ahat, \Bhat)$ and by $\vecx_t$
and $\vecu_t$ the states and actions obtained on the system
$(\Astar, \Bstar)$ when the optimal controller $\vecu_t = \Kstar \vecx_t$ is used. To prove
Proposition~\ref{prop:lipschitz_dare} we choose a sequence of actions
$ \{ \widehat{\vecu}_t\}_{t \geq 0}$ such that $ J(\Ahat, \Bhat, \vecx_0, \{ \widehat{\vecu}_t\}_{t \geq 0}) \approx
J(\Astar, \Bstar, \vecx_0, \Kstar)$.

For any sequence of inputs $\{\widehat{\vecu}_{t}\}_{t \geq 0}$ such that the series defining the cost $J(\Ahat, \Bhat, \vecx_0, \{ \widehat{\vecu}_t\}_{t \geq 0})$ is
absolutely convergent, we can write
\begin{align}
  \label{eq:block_series}
  J(\Ahat, \Bhat, \vecx_0, \Khat) - J(\Astar, \Bstar, \vecx_0, \Kstar) &= \sum_{j = 0}^{\infty} \sum_{i = 0}^{\ell - 1} \left[ \widehat{\vecx}_{\ell j + i}^\top Q \widehat{\vecx}_{\ell j + i} - \vecx_{\ell j + i}^\top Q  \vecx_{\ell j + i} \right]\\
  \nonumber
                                                                       &\quad + \sum_{j = 0}^{\infty} \sum_{i = 0}^{\ell - 1} \left[  \widehat{\vecu}_{\ell j + i}^\top R \widehat{\vecu}_{\ell j + i}  -  \vecu_{\ell j + i}^\top R   \vecu_{\ell j + i} \right].
\end{align}
Then, the key idea is to choose a sequence of inputs
$\{\widehat{\vecu}_{t} \}_{t \geq 0}$ such that the system
$(\Ahat, \Bhat)$ tracks the system $(\Astar, \Bstar, \Kstar)$, i.e.,
$\widehat{\vecx}_{\ell j} = \vecx_{\ell j}$ for any $j \geq 0$ (
$\widehat{\vecx}_0 = \vecx_0$ because both systems are initialized at
the same state). This can be done because $(\Ahat, \Bhat)$ is $(\ell, \tau / 2)$-controllable when $(\Astar, \Bstar)$ is $(\ell, \tau)$-controllable and the estimation error is sufficiently small, as shown in Lemma~\ref{lem:controllable_perturb}.
First, we present a result that quantifies the effect of matrix perturbations on powers of matrices.

\begin{mylemma}
  \label{lem:lipschitz_matrix_powers}
  Let $M$ be an arbitrary matrix in $\R^{\statedim \times \statedim}$
  and let $\rho \geq \rho(M)$. Then, for all
  $k \geq 1$ and real matrices $\Delta$ of appropriate dimensions we have

\begin{align*}
   \norm{(M + \Delta)^k} &\leq \transient{M}{\rho} ( \transient{M}{\rho} \norm{\Delta} + \rho)^{k},\\
  \norm{(M + \Delta)^k - M^k} &\leq k \, \transient{M}{\rho}^2 ( \transient{M}{\rho} \norm{\Delta} + \rho)^{k - 1} \norm{\Delta}.
\end{align*}
Recall that $\transient{M}{\rho}$ is defined in Equation~\ref{eq:transient}.
\end{mylemma}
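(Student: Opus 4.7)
The plan is to prove both bounds simultaneously by expanding $(M+\Delta)^k$ using the non-commutative binomial expansion and grouping terms according to how many copies of $\Delta$ appear. Write $\tau := \transient{M}{\rho}$ for brevity, so that $\norm{M^j} \leq \tau \rho^j$ for every $j \geq 0$ by definition.

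First, I would expand
\[
(M+\Delta)^k = \sum_{i=0}^{k} \sum_{\substack{a_0,\ldots,a_i \geq 0\\ a_0+\cdots+a_i = k-i}} M^{a_0}\,\Delta\, M^{a_1}\,\Delta\, \cdots\,\Delta\, M^{a_i},
\]
where the inner sum ranges over all compositions of $k-i$ into $i+1$ non-negative parts. Each such word has norm at most $\tau^{i+1}\rho^{a_0+\cdots+a_i}\norm{\Delta}^i = \tau^{i+1}\rho^{k-i}\norm{\Delta}^i$ by submultiplicativity and the transient-response bound. A standard stars-and-bars count shows that the number of such compositions equals $\binom{k}{i}$.

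Assembling these pieces via the triangle inequality gives
\[
\norm{(M+\Delta)^k} \leq \sum_{i=0}^{k}\binom{k}{i}\tau^{i+1}\rho^{k-i}\norm{\Delta}^i = \tau\bigl(\rho + \tau\norm{\Delta}\bigr)^k,
\]
which is the first inequality after an application of the ordinary binomial theorem.

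For the second inequality, I would observe that the $i = 0$ term of the expansion is exactly $M^k$, so $(M+\Delta)^k - M^k$ is the same sum restricted to $i \geq 1$. Factoring out $\tau^2\norm{\Delta}$ and using $\binom{k}{i} = \tfrac{k}{i}\binom{k-1}{i-1} \leq k\binom{k-1}{i-1}$, the substitution $j = i-1$ converts the remaining sum into $\sum_{j=0}^{k-1}\binom{k-1}{j}\tau^j\rho^{k-1-j}\norm{\Delta}^j = (\rho+\tau\norm{\Delta})^{k-1}$, yielding
\[
\norm{(M+\Delta)^k - M^k} \leq k\,\tau^2\,\norm{\Delta}\bigl(\rho + \tau\norm{\Delta}\bigr)^{k-1}.
\]
There is no real obstacle here; the only delicate points are the combinatorial identification of the number of compositions and the binomial reindexing, both of which are elementary. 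An equivalent but more tedious alternative would be to induct on $k$ using the identity $(M+\Delta)^k = M^k + \sum_{j=0}^{k-1} M^j\Delta(M+\Delta)^{k-1-j}$, together with the geometric-series identity $a^k - b^k = (a-b)\sum_{j=0}^{k-1}a^{k-1-j}b^j$ applied with $a = \rho+\tau\norm{\Delta}$ and $b = \rho$; I prefer the expansion approach above since it yields both bounds in a single pass.
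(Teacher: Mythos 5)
Your proposal is correct and follows essentially the same route as the paper: expand $(M+\Delta)^k$ into its $2^k$ words, bound each word of $\Delta$-degree $i$ by $\transient{M}{\rho}^{i+1}\rho^{k-i}\norm{\Delta}^i$, and resum with the binomial theorem. The only cosmetic difference is in the last step of the second bound, where the paper applies the mean value theorem to $z \mapsto z^k$ to control $(\transient{M}{\rho}\norm{\Delta}+\rho)^k - \rho^k$, whereas you use the identity $\binom{k}{i} \leq k\binom{k-1}{i-1}$ and reindex; both yield the identical estimate.
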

The proof is deferred to Appendix~\ref{app:proof_lemma_matrix_perturb}.
Lemma~\ref{lem:lipschitz_matrix_powers} quantifies the effect of a perturbation $\Delta$,
applied to a matrix $M$ on the spectral radius of $M + \Delta$. We are interested
in quantifying the sizes of these perturbations for all $k = 1, 2, \ldots, \ell$. Depending on $\norm{\Delta}$, $M$, and $\rho$ the sum $\tau(M, \rho) \norm{\Delta} + \rho$ can either be greater than one or smaller than one. For notational
simplicity, in the rest of the proof we denote $\beta = \max\{1, \varepsilon  \transient{\Astar}{\rho}  + \rho\}$. Then,
we have $\norm{(\Astar + \Delta)^k} \leq \transient{\Astar}{\rho} \beta^{\ell - 1}$ and $\norm{(\Astar + \Delta)^k - \Astar^k} \leq \ell \transient{\Astar}{\rho}^2 \beta^{\ell - 1} \varepsilon$ for all $k \leq \ell - 1$ and all real matrices $\Delta$ with $\norm{\Delta} \leq \varepsilon$.

We denote $C_\ell = \begin{bmatrix}
\Bstar & \Astar \Bstar & \ldots & \Astar^{\ell - 1} \Bstar
\end{bmatrix} $ and $\widehat{\Ccal}_\ell = \begin{bmatrix} \Bhat & \Ahat\Bhat & \ldots &
  \Ahat^{\ell - 1}\Bhat \end{bmatrix}$. Before presenting the next result
we recall that for any block matrix $M$ with blocks $M_{i,j}$ we have
$  \|M\|^2 \leq  \sum_{i,j} \|M_{i,j}\|^2$.
The next lemma gives us control over the smallest positive singular value of the controllability matrix $\widehat{\Ccal}_\ell$ in terms of the corresponding value for $\Ccal_\ell$.

\begin{mylemma}
  \label{lem:controllable_perturb}
  Suppose the linear $(\Astar, \Bstar)$ is $(\ell, \lbsmin)$-controllable and let $\rho$ be a real number such that $\rho \geq \rho(\Astar)$. Then, if
  $\| \Ahat - \Astar \| \leq \varepsilon$ and
  $\| \Bhat - \Bstar \| \leq \varepsilon$,
  we have
  \begin{align*}
    \underline{\sigma}(\widehat{C}_\ell) \geq \tau - 3 \varepsilon \ell^{\frac{3}{2}} \transient{\Astar}{\rho}^2  \max\{1, \transient{\Astar}{\rho} \norm{\Delta} + \rho\}^{\ell - 1} \left( \norm{\Bstar} + 1 \right). \end{align*}
\end{mylemma}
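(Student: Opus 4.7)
The plan is to control $\smin(\widehat{\Ccal}_\ell)$ through Weyl's inequality: since singular values are $1$-Lipschitz in the spectral norm, it suffices to show
\[
\norm{\widehat{\Ccal}_\ell - \Ccal_\ell} \leq 3 \varepsilon \ell^{3/2} \transient{\Astar}{\rho}^2 \beta^{\ell-1}(\norm{\Bstar}+1),
\]
where $\beta = \max\{1, \transient{\Astar}{\rho}\varepsilon + \rho\}$, and then invoke $\smin(\widehat{\Ccal}_\ell) \geq \smin(\Ccal_\ell) - \norm{\widehat{\Ccal}_\ell - \Ccal_\ell} \geq \lbsmin - \norm{\widehat{\Ccal}_\ell - \Ccal_\ell}$. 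This reduces the problem to a blockwise estimate on the difference of the two controllability matrices.

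First, I would use the stated block-norm fact, namely $\norm{\widehat{\Ccal}_\ell - \Ccal_\ell}^2 \leq \sum_{k=0}^{\ell-1} \norm{\Ahat^k \Bhat - \Astar^k \Bstar}^2$, so it is enough to bound each block. For a fixed $k \in \{0, 1, \ldots, \ell - 1\}$, I would split
\[
\Ahat^k \Bhat - \Astar^k \Bstar = (\Ahat^k - \Astar^k)\Bhat + \Astar^k(\Bhat - \Bstar),
\]
and bound the two summands separately. Lemma~\ref{lem:lipschitz_matrix_powers} gives $\norm{\Ahat^k - \Astar^k} \leq k \transient{\Astar}{\rho}^2 \beta^{k-1} \varepsilon$ and $\norm{\Astar^k} \leq \transient{\Astar}{\rho}\rho^k \leq \transient{\Astar}{\rho}\beta^k$, while $\norm{\Bhat} \leq \norm{\Bstar} + \varepsilon \leq \norm{\Bstar} + 1$ once $\varepsilon \leq 1$. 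Combining these gives, for every $k \leq \ell - 1$,
\[
\norm{\Ahat^k \Bhat - \Astar^k \Bstar} \leq \ell \transient{\Astar}{\rho}^2 \beta^{\ell - 1} \varepsilon (\norm{\Bstar} + 1) + \transient{\Astar}{\rho} \beta^{\ell - 1} \varepsilon.
\]

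Plugging this uniform per-block bound into the block-norm inequality introduces one extra factor of $\sqrt{\ell}$, yielding
\[
\norm{\widehat{\Ccal}_\ell - \Ccal_\ell} \leq \ell^{3/2} \transient{\Astar}{\rho}^2 \beta^{\ell - 1} \varepsilon (\norm{\Bstar} + 1) + \sqrt{\ell} \, \transient{\Astar}{\rho} \beta^{\ell - 1} \varepsilon.
\]
Since $\transient{\Astar}{\rho} \geq 1$, the second summand is absorbed into the first up to a constant, producing the claimed $3 \varepsilon \ell^{3/2} \transient{\Astar}{\rho}^2 \beta^{\ell - 1}(\norm{\Bstar} + 1)$ bound, at which point Weyl's inequality closes the argument.

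I do not expect a genuine obstacle here: everything rests on Lemma~\ref{lem:lipschitz_matrix_powers}, the triangle inequality, and the blockwise Frobenius-style bound given in the excerpt. The only place where care is required is keeping the exponents of $\beta$ and $\transient{\Astar}{\rho}$ uniform across $k = 0, 1, \ldots, \ell - 1$ and confirming that the subleading term $\sqrt{\ell}\,\transient{\Astar}{\rho}\beta^{\ell-1}\varepsilon$ can be folded into the leading one with a small absolute constant (here $3$) without needing extra assumptions beyond $\varepsilon \leq 1$ and $\transient{\Astar}{\rho} \geq 1$.
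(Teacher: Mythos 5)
Your proposal is correct and follows essentially the same route as the paper: reduce to an operator-norm bound on $\widehat{\Ccal}_\ell - \Ccal_\ell$ (the paper phrases the Weyl step variationally as $\min_{v}\norm{v^\top\widehat{\Ccal}_\ell}$), then bound blockwise via an add-and-subtract, Lemma~\ref{lem:lipschitz_matrix_powers}, and the Frobenius-type block inequality. The one small difference is that your split $(\Ahat^k-\Astar^k)\Bhat+\Astar^k(\Bhat-\Bstar)$ forces you to control $\norm{\Bhat}$ and hence to assume $\varepsilon\leq 1$, a hypothesis not present in the lemma; the paper's ordering, with blocks $(\Astar^k-\Ahat^k)\Bstar$ and $\Ahat^k(\Bstar-\Bhat)$, pairs $\Bstar$ with the power difference and $\Ahat^k$ (bounded by $\transient{\Astar}{\rho}\beta^{k}$) with $\Bstar-\Bhat$, yielding the same $(\norm{\Bstar}+1)$ factor with no restriction on $\varepsilon$.
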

The proof is deferred to Appendix~\ref{app:proof_controllable_perturb}.
Lemma~\ref{lem:controllable_perturb} tells us that
by the assumption made in Proposition~\ref{prop:lipschitz_dare} on $\varepsilon$, we have
$\underline{\sigma}(\widehat{\Ccal}_\ell) \geq
\frac{\tau_\ell}{2}$. Hence, we know that for any $\vecx_0\in \RR^\statedim$ and $\vecu_0, \vecu_1, \ldots, \vecu_{\ell - 1}\in \RR^\inputdim$, there
exist $\widehat{\vecu}_0, \widehat{\vecu}_1, \ldots, \widehat{\vecu}_{\ell -
  1} \in \RR^\inputdim$ such that
\begin{align}
  \label{eq:matching_states}
  \Astar^{\ell} \vecx_0 + \sum_{i = 0}^{\ell - 1} \Astar^i \Bstar\vecu_{\ell - 1 - i} = \Ahat^{\ell} \vecx_0 + \sum_{i = 0}^{\ell - 1}\Ahat^i\Bhat\widehat{\vecu}_{\ell - 1 - i}
\end{align}
because the system $(\Ahat, \Bhat)$ is controllable. This equation implies that $\widehat{\vecx}_\ell = \vecx_\ell$.

We denote the concatenation of $\vecu_i$, for $i$ from $0$ to
$\ell - 1$ by $\vecu^{(\ell)}$. We define $\widehat{\vecu}^{(\ell)}$ analogously. Therefore, Equation~\eqref{eq:matching_states} can be
rewritten as
\begin{align}
  \label{eq:tracking}
  \left( \Astar^\ell - \widehat{A}^\ell  \right) \vecx_0 + \left(\mathcal{C}_\ell - \widehat{\mathcal{C}}_\ell \right) \vecu^{(\ell)} = \widehat{\mathcal{C}}_\ell (\widehat{\vecu}^{(\ell)} - \vecu^{(\ell)}).
\end{align}
Recall that $\beta = \max\{1, \transient{\Astar}{\rho} \norm{\Delta} + \rho\}$.
Combining Lemma~\ref{lem:lipschitz_matrix_powers} and the upper bound on operator norms of block matrices we find $\| \widehat{\Ccal}_\ell - \Ccal_\ell \| \leq \varepsilon \ell^{\frac{3}{2}} \transient{\Astar}{ \rho}^2 \beta^{\ell - 1} \left( \| \Bstar \| + 1 \right)$.

We are free to choose $\widehat{\vecu}^{(\ell)}$ anyway we wish as long as Equation~\eqref{eq:tracking} is true. Therefore, we can choose $\widehat{\vecu}^{(\ell)}$
such that $\widehat{\vecu}^{(\ell)} - \vecu^{(\ell)}$ is perpendicular to the nullspace of $\widehat{\Ccal}_\ell$. Then,
\begin{align*}
  \frac{\tau_\ell}{2} \|\widehat{\vecu}^{(\ell)} - \vecu^{(\ell)} \| &\leq \| \widehat{\Ccal}_\ell(\widehat{\vecu}^{(\ell)} - \vecu^{(\ell)} ) \| \leq \varepsilon \ell \transient{\Astar}{ \rho}^2 \beta^{\ell - 1} \| \vecx_0\| + \|\widehat{\Ccal}_\ell - \Ccal_\ell\| \| \vecu^{(\ell)}\| \\
 &\leq \varepsilon \ell \transient{\Astar}{ \rho}^2 \beta^{\ell - 1} \| \vecx_0\|  + \varepsilon \ell^{\frac{3}{2}} \transient{\Astar}{ \rho}^2 \beta^{\ell - 1} \left(\|\Bstar \| + 1 \right) \| \vecu^{(\ell)}\|.
\end{align*}
Hence,
\begin{align}
    \nonumber
  \|\widehat{\vecu}^{(\ell)} - \vecu^{(\ell)} \|
  &\leq \frac{2 \varepsilon \ell^{\frac{3}{2}}}{\tau_\ell} \transient{\Astar}{ \rho}^2 \beta^{\ell - 1} (\| \Bstar \| + 1) \left( \| \vecx_0 \| + \| \vecu^{(\ell)} \| \right)\\
  \label{eq:u_bound}
  &=: \eta \left( \| \vecx_0 \| + \| \vecu^{(\ell)} \| \right) \:.
\end{align}
Let us consider the block Toeplitz matrix
\begin{align*}
  \Tcal_\ell = \begin{bmatrix}
    0 & 0 & 0 & \ldots & 0\\
    \Bstar & 0 & 0 & \cdots & 0\\
    \Astar \Bstar & \Bstar & 0 & \cdots & 0 \\
    \vdots & \cdots & \cdots & \cdots & \cdots  \\
    \Astar^{\ell - 2} \Bstar & \Astar^{\ell - 3} \Bstar & \cdots & \Bstar & 0
  \end{bmatrix}.
\end{align*}
From Lemma~\ref{lem:lipschitz_matrix_powers} and the upper bound on operator norms of block matrices we have
$\|\Tcal_\ell - \widehat{\Tcal}_\ell\| \leq \varepsilon \ell^2 \transient{\Astar}{\rho}^2 \beta^{\ell - 2}(\|\Bstar \| + 1)$.
Let $\vecx^{(\ell)}$ be the concatenation of the vectors $\vecx_0$, $\vecx_1$, \ldots, $\vecx_{\ell - 1}$. Then,
\begin{align*}
  \vecx^{(\ell)} = \Tcal_\ell \vecu^{(\ell)} + \begin{bmatrix}
    I_\statedim \\
    \Astar\\
    \vdots\\
    A^{\ell - 1}
  \end{bmatrix}
  \vecx_0.
\end{align*}
Hence,
\begin{align}
  \nonumber
  \| \vecx^{(\ell)}  -  \widehat{\vecx}^{(\ell)} \| &\leq \| \Tcal_\ell \vecu^{(\ell)}  - \widehat{\Tcal}_\ell \widehat{\vecu}^{(\ell)}  \| + \varepsilon \ell^{\frac{3}{2}} \transient{\Astar}{\rho}^2 \beta^{\ell - 2} \| \vecx_0 \| \\
    \nonumber
                                                    &\leq \| \Tcal_\ell \vecu^{(\ell)}  - \widehat{\Tcal}_\ell \vecu^{(\ell)}  \| + \| \widehat{\Tcal}_\ell \vecu^{(\ell)}  - \widehat{\Tcal}_\ell \widehat{\vecu}^{(\ell)}  \| +  \varepsilon \ell^{\frac{3}{2}}  \transient{\Astar}{ \rho}^2 \beta^{\ell - 2} \| \vecx_0 \| \\
    \nonumber
                                                    &\leq \| \Tcal_\ell  - \widehat{\Tcal}_\ell \| \| \vecu^{(\ell)}  \| + \| \widehat{\Tcal}_\ell\| \| \vecu^{(\ell)}  - \widehat{\vecu}^{(\ell)}  \| +  \varepsilon \ell^{\frac{3}{2}}  \transient{\Astar}{\rho}^2 \beta^{\ell - 2} \| \vecx_0 \|  \\
    \nonumber
                                                    &\leq  \varepsilon \ell^2 \transient{\Astar}{\rho}^2 \beta^{\ell - 2}(\|\Bstar \| + 1) \| \vecu^{(\ell)}  \| +  \varepsilon  \ell^{\frac{3}{2}}  \transient{\Astar}{\rho}^2 \beta^{\ell - 2} \| \vecx_0 \|   \\
    \nonumber
                                                    &\quad + \ell \transient{\Astar}{\rho} \beta^{\ell - 2} (\|\Bstar\| + 1) \| \vecu^{(\ell)}  - \widehat{\vecu}^{(\ell)}  \| \\
    \nonumber
                                                    &\leq 2 \varepsilon \ell^{\frac{5}{2}} \transient{\Astar}{\rho}^3 \beta^{2(\ell - 1)} (1 + \tau^{-1}) (\|\Bstar \| + 1)^2  \left[ \| \vecx_0 \| + \|\vecu^{(\ell)}\| \right] \\
  \label{eq:x_bound}
                                                      &=: \mu \left[  \| \vecu^{(\ell)}  \| +  \| \vecx_0 \| \right].
\end{align}
In Equations \eqref{eq:u_bound} and \eqref{eq:x_bound} we proved that the inputs and states of the system $(\Ahat, \Bhat)$ are close to the inputs and states of the system $(\Astar, \Bstar)$ from time $0$ to $\ell$. Since the inputs to the system $(\Ahat, \Bhat)$ satisfy Equation~\eqref{eq:tracking}, we know that $\widehat{\vecx}_{\ell j} = \vecx_{\ell j}$ for all $j$. We can repeat the same argument as above, with $\vecx_{\ell j}$ taking the place of $\vecx_0$, to show that the inputs and states of the two systems are close to each other from time $\ell j$ to $\ell (j + 1)$. Let us denote by $\vecx_j^{(\ell)}$ the concatenation of the vectors $\vecx_{\ell j}$, $\vecx_{\ell j + 1}$, \ldots, $\vecx_{\ell j + \ell - 1}$ and let $\vecu_j^{(\ell)}$ be defined analogously. Then,
\begin{align}
  \label{eq:u_j_bound}
  \|\widehat{\vecu}_j^{(\ell)} - \vecu_j^{(\ell)} \| &\leq  \eta \left[ \| \vecu_j^{(\ell)} \| +  \| \vecx_{\ell j} \| \right], \; \text{ and }\;
  \| \widehat{\vecx}_j^{(\ell)}  -  \vecx_j^{(\ell)}    \| \leq \mu \left[\| \vecu_j^{(\ell)}  \| +  \| \vecx_{\ell j} \| \right].
\end{align}
Now, we note that
\begin{align*}
  \vecx_0^\top \Phat \vecx_0 - \vecx_0^\top \Pstar \vecx_0  &\leq \sum_{j = 0}^{\infty} \sum_{i = 0}^{\ell - 1} \left[ \widehat{\vecx}_{\ell j + i}^\top Q \widehat{\vecx}_{\ell j + i} - \vecx_{\ell j + i}^\top Q  \vecx_{\ell j + i} \right]\\
                         &\quad + \sum_{j = 0}^{\infty} \sum_{i = 0}^{\ell - 1} \left[  \widehat{\vecu}_{\ell j + i}^\top R \widehat{\vecu}_{\ell j + i}  -  \vecu_{\ell j + i}^\top R   \vecu_{\ell j + i} \right]\\
                         &\leq \sum_{j = 0}^{\infty} 2 \|Q \| \| \vecx_j^{(\ell)}\| \| \vecx_{j}^{(\ell)}  -  \widehat{\vecx}_{j}^{(\ell)} \| + \|Q\| \| \vecx_{j}^{(\ell)}  -  \widehat{\vecx}_{j}^{(\ell)} \|^2  \\
                         &\quad +  \sum_{j = 0}^{\infty} 2 \|R \| \| \vecu_{j}^{(\ell)}\| \| \vecu_{j}^{(\ell)}  -  \widehat{\vecu}_{j}^{(\ell)} \| + \|R\| \| \vecu_{j}^{(\ell)}  -  \widehat{\vecu}_{j}^{(\ell)} \|^2 \:.
\end{align*}
%
%
Now, we use the upper bounds from \eqref{eq:u_j_bound}. We always have $\eta \leq \mu$. Since Proposition~\ref{prop:lipschitz_dare} assumes $\varepsilon$ is  small enough, we also have $\mu \leq 1$. Using these upper bounds, we find
\begin{align*}
  \widehat{J} - J_\star  &\leq \mu \sum_{j = 0}^{\infty} 2 \|Q \| \| \vecx_j^{(\ell)}\| \left[\| \vecu_j^{(\ell)}  \| +  \| \vecx_{\ell j} \| \right] + \|Q\| \left[\| \vecu_j^{(\ell)}  \| +  \| \vecx_{\ell j} \| \right]^2  \\
                         &\quad + \mu \sum_{j = 0}^{\infty} 2 \|R \| \| \vecu_{j}^{(\ell)}\| \left[\| \vecu_j^{(\ell)}  \| +  \| \vecx_{\ell j} \| \right]  + \|R\| \left[\| \vecu_j^{(\ell)}  \| +  \| \vecx_{\ell j} \| \right]^2 \:.
\end{align*}
Then, we get $\widehat{J} - J_\star \leq 8 \mu \max\{ \|Q\| , \|R\| \} \sum_{j = 0}^\infty \| \vecx_{j}^{(\ell)}\|^2 + \| \vecu_{j}^{(\ell)}\|^2$ after using the inequalities $(a + b)^2 \leq 2(a^2 + b^2)$ and $2ab \leq a^2 + b^2$.
Now, As long as $\norm{\vecx_0} \leq 1$ we have
\begin{align}
  \label{eq:upper_bound_states_series}
  \min \{ \underline{\sigma}(Q) , \underline{\sigma}(R) \} \sum_{j = 0}^\infty  \| \vecx_j^{(\ell)}\|^2 +  \| \vecu_j^{(\ell)}\|^2 \leq \sum_{t = 0}^\infty \vecx_t^\top Q \vecx_t + \vecu_t^\top R \vecu_t =
  \vecx_0^\top \Pstar \vecx_0  \leq \|\Pstar\|.
\end{align}
Since the initial state is an arbitrary unit norm vector, our upper bound on $\vecx_0^\top (\Phat - \Pstar) \vecx_0$ becomes
\begin{align}
  \label{eq:j_jhat_2}
  \lambda_{\max} \left(\widehat{P} - \Pstar \right) \leq 16 \varepsilon \ell^{\frac{5}{2}}  \transient{\Astar}{\rho}^3 \beta^{2(\ell - 1)} (1 + \lbsmin^{-1}) ( 1 + \|\Bstar \| )^2 \|P_\star \| \frac{ \max\{ \|Q\|,   \|R\| \} }{ \min \{ \underline{\sigma}(Q) ,  \underline{\sigma}(R) \} }.
\end{align}
Now, we can reverse the roles of $(\Ahat, \Bhat)$ and
$(\Astar, \Bstar)$ and repeat the same argument and obtain an upper bound
on $\lambda_{\max}\left(\Pstar - \widehat{P}\right)$ analogous to Equation~\eqref{eq:j_jhat_2},
but which has $\|P_\star\|$ replaced by $\|\widehat{P}\|$ on the right hand side. However,
\eqref{eq:j_jhat_2} implies that $\|\widehat{P}\| \leq \|P_\star \| + 1 \leq 2 \|P_\star \|$ because we assumed that $\varepsilon$ is small enough such that the right hand side of \eqref{eq:j_jhat_2} is less than one, and because $\Pstar \succeq I_\statedim$. The conclusion follows.

We note that the proof can be  extended to the case when the cost matrix $Q$ is also being perturbed. Moreover, the only essiential step in the argument where we used $Q \succ 0$ is \eqref{eq:upper_bound_states_series}. The goal of \eqref{eq:upper_bound_states_series} is to upper bound  $\sum_{j = 0}^\infty \| \vecx_{j}^{(\ell)}\|^2 + \| \vecu_{j}^{(\ell)}\|^2$. This quantity can be upper bounded even when $Q$ is not positive definite, but the system $(Q^{1/2}, A)$ is observable; which is a necessary requirement for LQR on the parameters $(A,B,Q,R)$ to stabilize the system $(A,B)$. 

\section{Related Work}
\label{sec:related}


For the offline LQR batch setting, \citet{fiechter1997pac} proved that the
sub-optimality gap  $\Jhat - \Jstar$ scales as $\calO(\varepsilon)$ for
certainty equivalent control.
A crucial assumption of his analysis is that the nominal  controller stabilizes the true unknown system. We give bounds on when this assumption is valid.
Recently, \citet{dean2017sample} proposed a
robust controller synthesis procedure which takes model uncertainty into account and whose 
 suboptimality gap scales as $\calO(\varepsilon)$.
\citet{tu18} show that the gap $\Jhat - \Jstar$ of certainty equivalent control
scales asymptotically as $\calO(\varepsilon^2)$;
we provide a non-asymptotic analogue of this result. 
\citet{Fazel18} and \citet{malik2018derivative} analyze a model-free approach to policy optimization for LQR, in which the
controller is directly optimized from sampled rollouts. \citet{malik2018derivative}
showed that, after collecting $N$ rollouts, a derivative free method achieves a discounted cost gap that scales as $\calO(1/\sqrt{N})$ or $\calO(1 / N)$, depending on the oracle model used.

In the online LQR adaptive setting it is well understood that using the 
certainty equivalence principle without adequate exploration can result in
a lack of parameter convergence \cite[see e.g.][]{astrom73}. \citet{abbasi2011regret} showed that
optimism in the face of uncertainty (OFU), when applied to online LQR, yields $\Otilde(\sqrt{T})$ regret.
\citet{faradonbeh17} removed some un-necessary assumptions of the previous analysis.
\citet{ibrahimi12} showed that when the underlying system is sparse, the dimension dependent constants
in the regret bound can be improved.
The main issue with OFU for LQR is that there are no known computationally tractable ways of implementing it. 
In order to deal with this, both \citet{dean18regret} and \citet{abbasi18} propose polynomial time algorithms
for adaptive LQR based on $\varepsilon$-greedy exploration which achieve $\Otilde(T^{2/3})$ regret.
Only recently progress has been made on offering $\Otilde(\sqrt{T})$ regret guarantees for computationally tractable algorithms.
\citet{abeille18} show that Thompson sampling achieves $\Otilde(\sqrt{T})$ (frequentist) regret for the case when the state and inputs
are both scalars. 
In a Bayesian setting \citet{ouyang17} showed that Thompson sampling achieves $\Otilde(\sqrt{T})$ \emph{expected} regret.
\citet{faradonbeh18} argue that
certainty equivalence control with an epsilon-greedy-like scheme achieves $\Otilde(\sqrt{T})$ regret, though their work does not provide any explicit dependencies on instance parameters.
Finally, \citet{cohen19} also give an efficient algorithm based on semidefinite programming that achieves $\Otilde(\sqrt{T})$ regret.

The literature for LQG is less complete, with most of the focus
on the estimation side. \citet{hardt2016gradient} show that gradient descent can be used
to learn a model with good predictive performance, under strong technical assumptions on the $A$ matrix.
A line of work \cite{hazan17,hazan18} has focused on using spectral filtering techniques to
learn a predictive model with low regret. 
Beyond predictive performance, several works
\cite{oymak2018non,simchowitz2019learning,tsiamis19} show how to learn the system dynamics up to a similarity
transform from input/output data. 
Finally, we remark that \citet{boczar18}
give sub-optimality guarantees for output-feedback of a single-input-single-output (SISO) linear system with no process noise.

A key part of our analysis involves bounding the perturbation of 
solutions to the discrete algebraic Riccati equation.
While there is a rich line of work studying perturbations of Riccati equations
\cite{konstantinov1993perturbation, konstantinov2003perturbation,
sun1998perturbation, sun1998sensitivity}, the results in the literature are
either asymptotic in nature or difficult to use and interpret.  
We clarify the operator-theoretic result of \citet{konstantinov1993perturbation} and provide an explicit upper bound on the
perturbation based on their proof strategy. Also, we take a new direct approach and
use an extended notion of controllability to give a constructive and simpler
result. 
While the result of \citet{konstantinov1993perturbation} applies more generally to systems that are 
stabilizable, we give examples of linear systems for which our new perturbation result is tighter.


\section{Conclusion}
\label{sec:conclusion}

Though a na{\"{i}}ve Taylor expansion suggests that the fast rates we derive here must be achievable, precisely computing such rates has been open since the 80s. 
All of the pieces we used here have existed in the literature for some time, and perhaps it has just required a bit of time to align contemporary rate-analyses in learning theory with earlier operator theoretic work in optimal control.
%
%
There remain many possible extensions to this work. The robust control approach of \citet{dean2017sample} applies to many different objective functions besides quadratic costs, such as $\mathcal{H}_\infty$ and $\mathcal{L}_1$ control. It would
be interesting to know whether fast rates for control are possible for other objective functions.
%
%
%
Finally, determining the optimal minimax rate for both LQR and LQG
would allow us to understand the tradeoffs between nominal and robust control
at a more fine grained level.

%
%

\bibliographystyle{abbrvnat}
\bibliography{lqr}

\clearpage  
\appendix

\section{Proof of Lemma \ref{lem:lipschitz_matrix_powers}}
\label{app:proof_lemma_matrix_perturb}

This proof is a simple modification of Lemma D.1 in \cite{dean2017sample}.
We replicate the argument here for completeness.

   Fix an integer
  $k \geq 1$.  Consider the expansion of $(M + \Delta)^k$ into $2^k$
  terms.  Label all these terms as $T_{i,j}$ for $i=0, ..., k$ and
  $j=1, ..., {k \choose i}$ where $i$ denotes the degree of $\Delta$
  in the term (hence there are ${k \choose i}$ terms with a degree of
  $i$ for $\Delta$).  Using the fact that
  $\norm{M^k} \leq \transient{M}{\rho} \rho^k$ for all $k \geq 0$, we can bound
  $\norm{T_{i,j}} \leq \transient{M}{\rho}^{i+1} \rho^{k-i} \norm{\Delta}^i$.  Hence by
  triangle inequality:
  \begin{align*}
    \norm{(M + \Delta)^k} &\leq \sum_{i=0}^{k} \sum_{j} \norm{T_{i,j}} \\
                          &\leq \sum_{i=0}^{k} {k \choose i} \transient{M}{\rho}^{i+1} \rho^{k-i} \norm{\Delta}^i \\
                          &= \transient{M}{\rho} \sum_{i=0}^{k} {k \choose i} (\transient{M}{\rho} \norm{\Delta})^i \rho^{k-i} \\
                          &= \transient{M}{\rho} (\transient{M}{\rho}\norm{\Delta} + \rho)^k.
  \end{align*}
  To prove the first part of the lemma we follow the same argument. We
  find
  \begin{align*}
    \norm{(M + \Delta)^k - M^k} &\leq \sum_{i=1}^{k} \sum_{j} \norm{T_{i,j}} \\
                                &\leq \sum_{i=1}^{k} {k \choose i} \transient{M}{\rho}^{i+1} \rho^{k-i} \norm{\Delta}^i \\
                                &= \transient{M}{\rho} \sum_{i=1}^{k} {k \choose i} (\transient{M}{\rho} \norm{\Delta})^i \rho^{k-i} \\
                                &= \transient{M}{\rho} \left[ (\transient{M}{\rho}\norm{\Delta} + \rho)^k - \rho^k \right] \\
                                &\leq k C^2_M (\transient{M}{\rho}\norm{\Delta} + \rho)^{k - 1} \norm{\Delta}\:,
  \end{align*}
  where the last inequality follows from the mean value theorem
  applied to the function $z \mapsto z^k$.

\section{Proof of Lemma \ref{lem:controllable_perturb}}
\label{app:proof_controllable_perturb}

We can write
\begin{align*}
  \underline{\sigma} \left( \begin{bmatrix}
    \Bhat & \Ahat \Bhat & \ldots & \Ahat^{\ell - 1} \Bhat
  \end{bmatrix} \right) = \min_{v\in \Scal^{\statedim - 1}} \left\|
  v^\top \begin{bmatrix} \Bhat & \Ahat \Bhat & \ldots & \Ahat^{\ell
      - 1} \Bhat
  \end{bmatrix} \right\| .
\end{align*}
Fix an arbitrary unit vector $v$ in $\R^\inputdim$. Then,
\begin{align*}
  &\bignorm{ v^\top \begin{bmatrix}
    \Bstar & \Astar \Bstar & \ldots & \Astar^{\ell - 1} \Bstar
  \end{bmatrix} - v^\top \begin{bmatrix} \Bhat & \Ahat \Bhat &
    \ldots & \Ahat^{\ell - 1} \Bhat
  \end{bmatrix} } \\
  &\leq \bignorm{ v^\top \begin{bmatrix} \Bstar & \Astar \Bstar & \ldots & \Astar^{\ell - 1} \Bstar \end{bmatrix} -
    v^\top \begin{bmatrix} \Bstar & \Ahat \Bstar & \ldots & \Ahat^{\ell - 1} \Bstar \end{bmatrix} } \\
      &\quad + \bignorm{ v^\top \begin{bmatrix} \Bstar & \Ahat \Bstar & \ldots & \Ahat^{\ell - 1} \Bstar \end{bmatrix}
        - v^\top \begin{bmatrix} \Bhat & \Ahat \Bhat & \ldots & \Ahat^{\ell - 1} \Bhat \end{bmatrix} } \\
  &\leq  \varepsilon \ell^{\frac{3}{2}} \transient{\Astar}{\rho}^2 \beta^{\ell - 1} \| \Bstar \| + \varepsilon \sqrt{\ell} \transient{\Astar, \rho} \beta^{\ell - 1} \\
  &\leq \varepsilon \ell^{\frac{3}{2}} \transient{\Astar}{\rho}^2 \beta^{\ell - 1} \left(\| \Bstar \| + 1 \right).
\end{align*}
We used $\ell \geq 1$, $\transient{\Astar}{\rho} \geq 1$, Lemma~\ref{lem:lipschitz_matrix_powers},
and the upper bound $\norm{M}^2 \leq \sum_{i,j} \norm{M_{i,j}}^2$ on the operator norm of a block matrix.
The conclusion follows by the triangle inequality.

\section{Proof of Lemma~\ref{lem:operator_93}}
\label{app:lem_93}

We wish to upper bound $\norm{\opPhi(X)}$ and $\norm{\opPhi(X_1) -
\opPhi(X_2)}$ for $X$, $X_1$, and $X_2$ in $\calS_\nu$. First, we upper bound
the operator norm of the linear operator $\opT^{-1}$, the inverse of  $\opT\colon X \mapsto X - \Lstar^\top X \Lstar$. Since
$\Lstar$ is a stable matrix, the linear map $\opT$ must be invertible.
Moreover, when $\Lstar$ is stable and $X - \Lstar^\top X \Lstar = M$ for some
matrix $M$, we know that $X = \sum_{k = 0}^{\infty} (\Lstar^k)^\top M \Lstar^k$.
Therefore, by the triangle inequality, the operator norm of $\opT^{-1}$ can be upper bounded by
$\norm{\opT^{-1}} \leq \frac{\transient{\Lstar}{\rho}^2}{1 - \rho^2}$.
Before we proceed with the rest of the proof  we present a technical lemma which will be used several times.
\begin{mylemma}
  \label{lem:psd_norm_bound}
  Let $M$ and $N$ be two positive semidefinite matrices of the same dimension. Then $\norm{N(I + MN)^{-1}} \leq \norm{N}$.
  \end{mylemma}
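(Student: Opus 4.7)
The plan is to exploit $N \succeq 0$ by factoring $N = N^{1/2} N^{1/2}$ and rewriting the product in a manifestly symmetric, PSD form whose operator norm is easy to bound. First I would establish the algebraic identity
\[
(I + N^{1/2} M N^{1/2})\, N^{1/2} = N^{1/2} + N^{1/2} M N = N^{1/2}(I + MN),
\]
which, after rearranging, yields
\[
N^{1/2}(I+MN)^{-1} = (I + N^{1/2} M N^{1/2})^{-1} N^{1/2}.
\]
Premultiplying by $N^{1/2}$ gives the central identity
\[
N(I+MN)^{-1} = N^{1/2}\,(I + N^{1/2} M N^{1/2})^{-1}\, N^{1/2}.
\]

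From here, the bound follows by a PSD ordering argument. Since $M, N \succeq 0$, the matrix $N^{1/2} M N^{1/2}$ is PSD, so $I + N^{1/2} M N^{1/2} \succeq I$ and consequently $(I + N^{1/2} M N^{1/2})^{-1} \preceq I$. Congruence by $N^{1/2}$ preserves this ordering, giving
\[
N^{1/2}(I + N^{1/2} M N^{1/2})^{-1} N^{1/2} \;\preceq\; N^{1/2} \cdot I \cdot N^{1/2} = N.
\]
Both sides are PSD, so the ordering transfers to operator norms, yielding $\norm{N(I+MN)^{-1}} \leq \norm{N}$. A submultiplicativity argument on the symmetric form works equally well, since $\norm{(I + N^{1/2} M N^{1/2})^{-1}} \leq 1$.

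The only subtlety, and the step I would take care to justify, is the invertibility of $I + MN$: because $MN$ need not be symmetric this is not immediate, but its nonzero eigenvalues coincide with those of the PSD matrix $N^{1/2} M N^{1/2}$, so the spectrum of $I + MN$ lies in $[1,\infty)$ and the inverse makes sense. Beyond this, the argument is essentially a one-line PSD manipulation once the symmetric reformulation is in hand, so I do not anticipate a serious obstacle.
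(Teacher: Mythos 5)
Your proof is correct, and it takes a slightly different route than the paper's. The paper assumes $M$ and $N$ are invertible (handling the singular case by perturbing to $M+\nu I$, $N+\nu I$ and letting $\nu \to 0$) and then uses the factorization $I+MN = (N^{-1}+M)N$ to get $N(I+MN)^{-1} = (N^{-1}+M)^{-1} \preceq N$, since $N^{-1}+M \succeq N^{-1}$. You instead use the square-root factorization $N(I+MN)^{-1} = N^{1/2}(I+N^{1/2}MN^{1/2})^{-1}N^{1/2}$, which has two advantages: it works directly when $N$ is singular, with no regularization or limiting argument needed, and it cleanly settles the invertibility of $I+MN$ in general (via the fact that $MN$ and $N^{1/2}MN^{1/2}$ share nonzero eigenvalues) — a point the paper's limiting argument glosses over, since passing to the limit implicitly requires $I+MN$ to be invertible at the boundary. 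The paper's version is marginally shorter once invertibility is granted; yours is self-contained. Both ultimately rest on the same observation that the product can be rewritten as a PSD matrix dominated by $N$ in the Loewner order.
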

\begin{proof}
  We assume that $M$ and $N$ are invertible. If they are not, we can work with the matrices $M + \nu I$ and $N + \nu I$ and take the limit of $\nu$ going to zero. Then, we have $N (I + MN)^{-1} = N N^{-1}(N^{-1} + M)^{-1} = (N^{-1} + M)^{-1} \preceq N$.
\end{proof}

Next, recall that $\opH(X) = L_\star^\top X \left(I + S_\star(\Pstar + X)\right)^{-1} S_\star X L_\star$.
Then, Lemma~\ref{lem:psd_norm_bound} yields
\begin{align*}
  \norm{\opH(X)} \leq \norm{\Lstar}^2 \norm{\Sstar} \norm{X}^2.
\end{align*}
We turn our attention to the difference $F(\Pstar + X, \Astar, \Bstar) - F(\Pstar + X, \Ahat, \Bhat)$. We use the notation $P_X$ as a shorthand for $\Pstar + X$. Then, by Equation~\ref{eq:riccati_op_short} we find
\begin{align}
  \nonumber
  F(P_X, \Ahat, \Bhat, \Qh) &- F(P_X, \Astar, \Bstar, Q_\star) = \Astar^\top P_X \left(I + \Sstar P_X \right)^{ - 1} \Astar -
                                                  \Ahat^\top P_X \left(I + \Shat P_X \right)^{ - 1} \Ahat - \Delta_Q\\
                                                &= \Astar^\top P_X (I + \Sstar P_X)^{-1} \Delta_S P_X (I + \Shat P_X)^{-1} \Astar - \Astar^\top P_X (I + \Shat P_X)^{-1} \Delta_A \nonumber \\
                                                &\quad - \Delta_A^\top P_X (I + \Shat P_X)^{-1} \Astar - \Delta_A^\top P_X (I + \Shat P_X)^{-1} \Delta_A - \Delta_Q. 
                                                  \label{eq:f_perturb}
\end{align}
Then,
\begin{align*}
  \norm{F(\Pstar + X, \Ahat, \Bhat, \Qh) &- F(\Pstar + X, \Astar, \Bstar, Q_\star)} \\
  &\leq \norm{\Astar}^2 \norm{P_X}^2 \norm{\Delta_S} + 2\norm{\Astar} \norm{P_X} \varepsilon + \norm{P_X} \varepsilon^2 + \varepsilon,
\end{align*}
where we used Lemma~\ref{lem:psd_norm_bound}. Since $X \in \calS_\nu$, we know $\norm{X} \leq \nu$ and hence $\norm{P_X} \leq \norm{\Pstar} + \nu$. We assumed that $\nu \leq 1 / 2$ and so $\norm{P_X} \leq \norm{\Pstar} + 1$. Now, we know that $\norm{\Delta_S} \leq 2\norm{\Bstar} \norm{R^{-1}} \varepsilon + \norm{R^{-1}} \varepsilon^2$ and since we assumed $\varepsilon \leq \norm{\Bstar}$, we have $\norm{\Delta_S} \leq 3 \norm{\Bstar} \norm{R^{-1}} \varepsilon$. Therefore,
\begin{align*}
\norm{\opPhi(X)} \leq \frac{\transient{\Lstar}{\rho}^2}{1 - \rho^2} \left[\norm{\Lstar}^2 \norm{\Sstar} \nu^2 + 3 \norm{\Astar}_+^2 \norm{\Pstar}^2_+ \norm{\Bstar}_+ \norm{R^{-1}}_+ \varepsilon \right].
\end{align*}

 We use Lemma~\ref{lem:psd_norm_bound}, the assumption $\nu \leq \norm{\Sstar}^{-1}$, and the definition of $\opH$ to upper bound
\begin{align*}
  \norm{\opH(X_1) - \opH(X_2)} &\leq \norm{\Lstar}^2\left[ \norm{\Sstar}^2 \nu^2 + 2\norm{\Sstar} \nu \right] \norm{X_1 - X_2} \leq 3 \norm{\Lstar}^2 \norm{\Sstar} \nu \norm{X_1 - X_2}. 
\end{align*}

Let us denote $\opG(X) = F(\Pstar + X, \Ahat, \Bhat, \Qh) - F(\Pstar + X, \Astar, \Bstar, Q_\star)$. In order to upper bound $\norm{\opG(X_1) - \opG(X_2)}$ we first upper bound the norm of $(I + \Sstar P_X)^{-1}$ and $(I + \Shat P_X)^{-1}$. Since $\norm{X} \leq \nu \leq 1/2$ and since $\Pstar \succeq I$, by Lemma~\ref{lem:psd_norm_bound} we get
\begin{align*}
\norm{(I + \Sstar P_X)^{-1}} = \norm{P_X^{-1} P_X (I + \Sstar P_X)^{-1}} \leq \norm{P_X^{-1}} \norm{P_X (I + \Sstar P_X)^{-1}} \leq 2 \norm{P_X}.
\end{align*}
Therefore, after some algebraic manipulations, we obtain
\begin{align*}                    
 \norm{\opG(X_1) - \opG(X_2)} &\leq 32 \varepsilon \norm{\Astar}_+^2 \norm{\Pstar}^3_+ \norm{\Bstar}^3_+ \norm{R^{-1}}^2_+  \norm{X_1 - X_2}.
\end{align*}


\section{Proofs for LQG}
\label{sec:proofs:lqg}

\subsection{Proof of Theorem~\ref{thm:lqg_subopt}}
The proof strategy works as follows.
Note that unlike the case for LQR, we were unable to reuse or derive an
exact second order perturbation bound analogous to 
Lemma~\ref{lem:second_order_lqr_perturbation} for LQG.
Hence we resort to an argument based on a simple Taylor expansion.

We first assume that $\Kh$ satisfies $\norm{\Kh - K_\star T^{-1}} \leq \varepsilon$ and that
$\varepsilon \leq 1$.
Let $\Theta = (A, B, C, W, V, Q, R)$ be the parameters that specific
an instance of LQG in \eqref{eq:lqg}. Here, we will be slightly more general than in
\eqref{eq:lqg} and allow the process noise $\vecw_t$ and
observation noise $\vecv_t$ to have non-isotropic covariances $W$ and $V$, respectively.
Let $J(\Ah, \Bh, \Ch, \Kh, \Lh; \Theta)$ denote the following cost:
\begin{align*}
	J(\Ah, \Bh, \Ch, \Kh, \Lh; \Theta) &= \lim_{T \to \infty} \E\left[ \frac{1}{T} \sum_{t=1}^{T} \vecy_t^\T Q \vecy_t + \vecu_t^\T R \vecu_t \right] ~~\text{s.t.} \\
	&~~~~~~~\vecx_{t+1} = A \vecx_t + B \vecu_t + \vecw_t \:, \:\: \vecw_t \sim \calN(0, W) \:,\\
	&~~~~~~~~~~~\vecy_t = C \vecx_t + \vecv_t \:, \:\: \vecv_t \sim \calN(0, V) \:,\\
	&~~~~~~~~~~~\vecu_t = \Kh \hatvecx_t \:, \\
	&~~~~~~~\hatvecx_{t+1} = \Ah \hatvecx_t + \Bh \vecu_t + \Lh (\vecy_t - \Ch \hatvecx_t) \:.
\end{align*}
As observed earlier, we have that
$J(\Ah, \Bh, \Ch, \Kh, \Lh; \Theta) = J(\Ah, \Bh, \Ch, \Kh, \Lh, \tTheta)$
for any $\tTheta$ equal to $(T A T^{-1}, T B, C T^{-1}, T W T^\T, V, Q, R)$ for some
invertible $T$.
Hence, we can assume that $T = I$ moving forward.

We consider the closely related cost $J_s(\Ah,\Bh,\Ch,\Kh,\Lh;\Theta)$ defined as:
\begin{align*}
	J_s(\Ah, \Bh, \Ch, \Kh, \Lh; \Theta) &= \lim_{T \to \infty} \E\left[ \frac{1}{T} \sum_{t=1}^{T} \vecx_t^\T Q \vecx_t + \vecu_t^\T R \vecu_t \right] ~~\text{s.t.} \\
	&~~~~~~~\vecx_{t+1} = A \vecx_t + B \vecu_t + \vecw_t \:, \:\: \vecw_t \sim \calN(0, W) \:,\\
	&~~~~~~~~~~~\vecy_t = C \vecx_t + \vecv_t \:, \:\: \vecv_t \sim \calN(0, V) \:,\\
	&~~~~~~~~~~~\vecu_t = \Kh \hatvecx_t \:, \\
	&~~~~~~~\hatvecx_{t+1} = \Ah \hatvecx_t + \Bh \vecu_t + \Lh (\vecy_t - \Ch \hatvecx_t) \:.
\end{align*}

Observe that:
\begin{align*}
	J(\Ah,\Bh,\Ch,\Kh,\Lh; \Theta) = J_s(\Ah,\Bh,\Ch,\Kh,\Lh; (A,B,C,W,V,Q_c,R)) + \Tr(Q V) \:, \:\: Q_c := C^\T Q C \:.
\end{align*}
Let $J_{s,\star}$ denote the optimal value for $J_s$.
This decomposition shows that:
\begin{align*}
	J(\Ah,\Bh,\Ch,\Kh,\Lh; \Theta) - J_\star =
	J_s(\Ah,\Bh,\Ch,\Kh,\Lh; (A,B,C,W,V,Q_c,R)) - J_{s,\star} \:.
\end{align*}
Hence we will work with $J_s$ from here on.
In the sequel, we will drop the dependence of $J$ on $\Theta$.
We write $\Xih = (\Ah,\Bh,\Ch,\Kh,\Lh)$
and $\Xi_\star = (A,B,C,K_\star,L_\star).$
By Taylor's theorem,
\begin{align*}
J_s(\Xih) - J_{s,\star} &= [D J_s(\Xi_\star)](\Delta) + \frac{1}{2} [D^2 J_s(\tXi)](\Delta,\Delta) \\
&=  \frac{1}{2} [D^2 J_s(\tXi)](\Delta,\Delta) \:.
\end{align*}
where the first equality holds by the optimality of $\Xi_\star$ and
$\tXi$ is an element along the ray between $\Xih$ and $\Xi_\star$.

We start by defining $\vece_t = \hatvecx_t - \vecx_t$.
Observe that:
\begin{align}
    \cvectwo{\vecx_t}{\hatvecx_t} = S \cvectwo{\vecx_t}{\vece_t} \:, \:\: S = \bmattwo{I}{0}{I}{I} \:. \label{eq:similarity_transform}
\end{align}
We write:
\begin{align}
    \cvectwo{\vecx_{t+1}}{\hatvecx_{t+1}} &= \cvectwo{ A \vecx_t + B \Kh \hatvecx_t + \vecw_t }{ \Ah \hatvecx_t + \Bh \Kh \hatvecx_t + \Lh( C\vecx_t + \vecv_t - \Ch \hatvecx_t  )  } \nonumber \\
    &= \bmattwo{ A }{B\Kh}{ \Lh C }{  (\Ah+\Bh\Kh) - \Lh \Ch } \cvectwo{\vecx_t}{\hatvecx_t} + \bmattwo{I}{0}{0}{\Lh} \cvectwo{\vecw_t}{\vecv_t} \nonumber \\
    &=: M(\Ah,\Bh,\Ch,\Kh,\Lh) \cvectwo{\vecx_t}{\hatvecx_t} + \bmattwo{I}{0}{0}{\Lh} \cvectwo{\vecw_t}{\vecv_t} \label{eq:lifted_update_eq} \:.
\end{align}
We abbreviate $\Mh := M(\Ah,\Bh,\Ch,\Kh,\Lh)$.
Applying a similarity transform to $\Mh$:
\begin{align*}
    S^{-1} \Mh S &= \bmattwo{I}{0}{-I}{I} \bmattwo{ A }{B\Kh}{ \Lh C }{  (\Ah+\Bh\Kh) - \Lh \Ch } \bmattwo{I}{0}{I}{I} \\
    &= \bmattwo{ A+B\Kh }{ B\Kh }{ (\Ah - A) + (\Bh - B)\Kh + \Lh (C - \Ch) }{ (\Ah - \Lh\Ch) + (\Bh-B)\Kh } \\
    &=: \Nh \:.
\end{align*}
Therefore $\Mh$ is stable iff $\Nh$ is stable.

By \eqref{eq:lifted_update_eq}, if $\Mh$ is stable, then the stationary distribution of $(\vecx_t, \hatvecx_t)$ is given by
$\calN(0, \Sigmah)$ with $\Sigmah = \dlyap\left(\Mh^\T, \bmattwo{W}{0}{0}{\Lh V \Lh^\T}\right)$.
Therefore, we have that:
\begin{align*}
	J_s(\Ah,\Bh,\Ch,\Kh,\Lh) = \Tr\left( \bmattwo{Q_c}{0}{0}{\Kh^\T R \Kh} \Sigmah \right) \:.
\end{align*}

Now for any $(\tA,\tB,\tC,\tK,\tL)$, observe we can write $\tM = M(\tA,\tB,\tC,\tK,\tL)$ 
in terms of $M_\star = M(A,B,C,K_\star,L_\star)$ as:
\begin{align*}
    \tM &= \tM - M_\star + M_\star \\
    &= \bmattwo{0}{B(\tK-K_\star)}{(\tL-L_\star)C}{(\tA-A) + (\tB-B)\tK + B(\tK-K_\star) - ((\tL-L_\star)\tC + L_\star(\tC-C))} + M_\star \\
    &=: \Delta + M_\star \:.
\end{align*}
Using the assumption that $\varepsilon \leq 1$,
we can bound $\norm{\Delta}$ by:
\begin{align*}
    \norm{\Delta} &\leq \norm{B} \varepsilon_K + \norm{C} \varepsilon_L + \varepsilon_A + (\norm{K_\star}+1)\varepsilon_B + \norm{B}\varepsilon_K + (\norm{C}+1)\varepsilon_L  + \norm{L_\star}\varepsilon_C \\
    &\leq \varepsilon_A + (\norm{K_\star}+1)\varepsilon_B + \norm{L_\star}\varepsilon_C + 2(\norm{C}+1)\varepsilon_L + 2\norm{B}\varepsilon_K \\
    &\leq 2 \Gamma_\star (\varepsilon_A + \varepsilon_B + \varepsilon_C + \varepsilon_K + \varepsilon_L) \\
    &\leq 10 \Gamma_\star \varepsilon \:.
\end{align*}
Let $\rho \geq \rho(M_\star)$.
If $10 \Gamma_\star \cdot \varepsilon \leq \frac{1-\rho}{2\tau(M_\star, \rho)}$,
then we have $\rho( \tM ) \leq (1+\rho)/2$
and $\tau( \tM, (1+\rho)/2 ) \leq \tau(M_\star, \rho)$.
This holds for any $(\tA,\tB,\tC,\tK,\tL)$ along the ray
of $(\Ah,\Bh,\Ch,\Kh,\Lh)$ and $(A,B,C,K,L)$.
Therefore for any point along this ray we have that
$\tM$ is stable and that for any fixed $H$,
$\norm{\dlyap(\tM^\T, H)} \leq \frac{\norm{H} \tau(M_\star, \rho) }{1-\Avg(\rho,1)^2}$.

Next, we consider $\tXi = (\tA,\tB,\tC,\tK,\tL)$
and differentiate $J_s$ twice w.r.t.~$\tXi$, assuming that $\tXi$ corresponds
to a point along the ray $\Xih = (\Ah,\Bh,\Ch,\Kh,\Lh)$ and $\Xi_\star = (A,B,C,K_\star,L_\star)$.
By the chain rule:
\begin{align*}
    &[D^2 J_s(\tXi)](\Delta, \Delta) \\
    &= \Tr\left(\bmattwo{Q_c}{0}{0}{\tK^\T R \tK} [D^2 \Sigma(\tXi)](\Delta, \Delta)\right) + \Tr\left(\bmattwo{0}{0}{0}{\Delta_K^\T R \tK + \tK^\T R \Delta_K} [D \Sigma(\tXi)](\Delta)\right) \\
    &\qquad+ 2\Tr\left(\bmattwo{0}{0}{0}{\Delta_K^\T R \Delta_K} \Sigma(\tXi)\right) \\
    &\leq (\Tr(Q_c) + \Tr(R)(\norm{K_\star}+1)^2)\norm{ [D^2 \Sigma(\tXi)](\Delta, \Delta) } \\
    &\qquad + 2 \Tr(R)(\norm{K_\star}+1) \varepsilon_K \norm{[D\Sigma(\tXi)](\Delta)} + 2\Tr(R)\varepsilon_K^2 \norm{\Sigma(\tXi)} \\
    &\leq (\Tr(Q_c) + \Tr(R)\Gamma_\star^2) \left( \norm{ [D^2 \Sigma(\tXi)](\Delta, \Delta) } + 2 \varepsilon_K \norm{[D\Sigma(\tXi)](\Delta)} +  2\varepsilon_K^2 \norm{\Sigma(\tXi)}  \right) \:.
\end{align*}

Given the composition $h(x) = g(f(x))$, we have by the chain rule:
\begin{align}
    [D h(x)](\Delta) &= [D g(f(x))]( [Df(x)](\Delta)) \label{eq:chain_one}  \:, \\
    [D^2 h(x)](\Delta, \Delta) &= [D^2 g(f(x))](  [Df(x)](\Delta), [Df(x)](\Delta) ) + [D g(f(x))]( [D^2 f(x)](\Delta, \Delta) ) \label{eq:chain_two} \:.
\end{align}

We apply this with $g(M, Q) = \dlyap(M, Q)$ and
\begin{align*}
    f(\tXi) = \left(\bmattwo{ A }{B\tK}{ \tL C }{  (\tA+\tB\tK) - \tL \tC }^\T, \bmattwo{W}{0}{0}{\tL V \tL^\T}\right) \:.
\end{align*}
Therefore the composition $h = g \circ f$ is $h(\Xi) = \Sigma(\Xi)$.
Differentiating $f$,
\begin{align*}
    [Df(\tXi)](\Delta) &= \left(\bmattwo{0}{B\Delta_K}{\Delta_L C}{\Delta_A + \tB \Delta_K + \Delta_B \tK - (\tL \Delta_C + \Delta_L \tC)}^\T, \bmattwo{0}{0}{0}{ \tL V \Delta_L^\T + \Delta_L V \tL^\T } \right) \:, \\
    [D^2 f(\tXi)](\Delta, \Delta) &= \left( \bmattwo{0}{0}{0}{2\Delta_B \Delta_K - 2 \Delta_L \Delta_C}^\T, \bmattwo{0}{0}{0}{ 2 \Delta_L V \Delta_L^\T } \right) \:.
\end{align*}
Letting $P = g(M, Q)$, we have:
\begin{align}
    &[D g(M, Q)](\Delta) = g(M, \Delta_Q + M^\T P \Delta_M + \Delta_M^\T P M) \:, \nonumber \\
    &[D^2 g(M, Q)](\Delta, \Delta) = 2g(M, \Delta_M^\T P \Delta_M + M^\T g(M, \Delta_Q) \Delta_M + \Delta_M^\T g(M, \Delta_Q) M) \nonumber \\
    &\qquad+ 2g(M, \Delta_M^\T g(M, \Delta_M^\T P M + M^\T P \Delta_M) M + M^\T g(M, \Delta_M^\T P M + M^\T P \Delta_M) \Delta_M ) \label{eq:g_d2} \:.
\end{align}
The derivation for \eqref{eq:g_d2} is postponed for now.

Let $\calS_M(Q)$ be the linear operator $Q \mapsto g(M, Q)$.
Observe that $\norm{\calS_M} \leq \frac{\tau^2(M, \rho)}{1-\rho^2}$
and $\norm{M^\T g(M, Q)} \leq \frac{\tau^2(M, \rho)}{1-\rho^2} \norm{Q}$.
With this, we can bound,
\begin{align*}
    \norm{[D g(M,Q)](\Delta)} &\leq \frac{\tau^2(M, \rho)}{1-\rho^2} \norm{\Delta_Q} + 2\frac{\tau^4(M,\rho)}{(1-\rho^2)^2} \norm{Q}\norm{\Delta_M}  \:, \\
    \norm{[D^2 g(M,Q)](\Delta,\Delta)} &\leq 10 \frac{\tau^6(M,\rho)}{(1-\rho^2)^3} \norm{Q}\norm{\Delta_M}^2 + 4 \frac{\tau^4(M,\rho)}{(1-\rho^2)^2} \norm{\Delta_Q}\norm{\Delta_M}\:.
\end{align*}

Therefore,
\begin{align*}
    &\norm{[D g(f(\tXi))]([Df(\tXi)](\Delta))} \\
    &\leq 2\frac{\tau^2(\tM,\trho)}{1-\trho^2}\norm{\tL}\norm{V}\varepsilon_L +  20 \frac{\tau^4(\tM,\trho)}{(1-\trho^2)^2} \max\{\norm{W},\norm{\tL V \tL^\T}\} \Gamma_\star \varepsilon \\
    &\leq 2 \frac{\tau^2(M_\star,\rho)}{1-\Avg(\rho,1)^2} \Gamma_\star \norm{V}\varepsilon_L + 20 \frac{\tau^4(M_\star,\rho)}{(1-\Avg(\rho,1)^2)^2} \max\{\norm{W},\Gamma_\star^2\norm{V}\} \Gamma_\star \varepsilon \\
    &\leq 22\frac{\tau^4(M_\star,\rho)}{(1-\Avg(\rho,1)^2)^2} \max\{\norm{W},\norm{V}\} \Gamma_\star^3 \varepsilon \:.
\end{align*}
and,
\begin{align*}
    &\norm{[D g(f(\tXi))]([ D^2 f(\tXi)](\Delta, \Delta))} \\
    &\leq 2\frac{\tau^2(\tM, \trho)}{1-\trho^2} \norm{V}\varepsilon_L^2 + 4 \frac{\tau^4(\tM, \trho)}{(1-\trho^2)^2} \max\{\norm{W}, \norm{\tL V \tL^\T}\} (\varepsilon_B\varepsilon_K + \varepsilon_L\varepsilon_C) \\
    &\leq 2 \frac{\tau^2(M_\star,\rho)}{1-\Avg(\rho,1)^2} \norm{V}\varepsilon_L^2 + 8 \frac{\tau^4(M_\star,\rho)}{(1-\Avg(\rho,1)^2)^2} \max\{\norm{W}, \Gamma_\star^2\norm{V}\}  \varepsilon^2 \\
    &\leq 10 \frac{\tau^4(M_\star,\rho)}{(1-\Avg(\rho,1)^2)^2} \max\{\norm{W},\norm{V}\} \Gamma_\star^2 \varepsilon^2 \:.
\end{align*}
and also,
\begin{align*}
    &\norm{ [D^2 g(f(\tXi))]( [Df(\tXi)](\Delta),  [Df(\tXi)](\Delta)   ) } \\
    &\leq 10 \frac{\tau^6(\tM, \trho)}{(1-\trho^2)^3} \max\{ \norm{W}, \norm{\tL V \tL^\T} \} (10 \Gamma_\star \varepsilon)^2 + 4 \frac{\tau^4(\tM, \trho)}{(1-\trho^2)^2}(2 \norm{\tL}\norm{V} \varepsilon_L)  (10\Gamma_\star \varepsilon) \\
    &\leq 1000 \frac{\tau^6(M_\star,\rho)}{(1-\Avg(\rho,1)^2)^3} \max\{\norm{W},\norm{V}\} \Gamma_\star^4 \varepsilon^2 + 80 \frac{\tau^4(M_\star,\rho)}{(1-\Avg(\rho,1)^2)^2} \norm{V}\Gamma_\star \varepsilon^2 \\
    &\leq 1080 \frac{\tau^6(M_\star,\rho)}{(1-\Avg(\rho,1)^2)^3} \max\{\norm{W},\norm{V}\} \Gamma_\star^4 \varepsilon^2 \:.
\end{align*}
Therefore,
\begin{align*}
    \norm{\Sigma(\tXi)} &\leq \frac{\tau^2(M_\star,\rho)}{1-\Avg(\rho,1)^2} \max\{\norm{W},\norm{V}\} \Gamma_\star^2  \:, \\
    \norm{[D\Sigma(\tXi)](\Delta)} &\leq 22\frac{\tau^4(M_\star,\rho)}{(1-\Avg(\rho,1)^2)^2} \max\{\norm{W},\norm{V}\} \Gamma_\star^3 \varepsilon \:, \\
    \norm{[D^2 \Sigma(\tXi)](\Delta,\Delta)} &\leq 1090\frac{\tau^6(M_\star,\rho)}{(1-\Avg(\rho,1)^2)^3} \max\{\norm{W},\norm{V}\} \Gamma_\star^4 \varepsilon^2  \:.
\end{align*}
Combining these bounds with the calculations for $J_s$,
\begin{align*}
    &[D^2 J_s(\tXi)](\Delta,\Delta) \\
    &\leq (\Tr(Q_c) + \Tr(R)\Gamma_\star^2) \left( \norm{ [D^2 \Sigma(\tXi)](\Delta, \Delta) } + 2\varepsilon_K \norm{[D\Sigma(\tXi)](\Delta)} +  2\varepsilon_K^2 \norm{\Sigma(\tXi)}  \right) \\
    &\leq 1136 (\Tr(Q_c) + \Tr(R)\Gamma_\star^2) \frac{\tau^6(M_\star,\rho)}{(1-\Avg(\rho,1)^2)^3} \max\{\norm{W},\norm{V}\} \Gamma_\star^4 \varepsilon^2 \\
    &\leq 1136 (\Tr(Q_c)+\Tr(R)) \frac{\tau^6(M_\star,\rho)}{(1-\Avg(\rho,1)^2)^3} \max\{\norm{W},\norm{V}\} \Gamma_\star^6 \varepsilon^2 \\
    &\leq 1136 \cdot 64 (\Tr(Q_c)+\Tr(R)) \frac{\tau^6(M_\star,\rho)}{(1-\rho^2)^3} \max\{\norm{W},\norm{V}\} \Gamma_\star^6 \varepsilon^2 \:.
\end{align*}

To finish the first part of proof, we show the derivation for \eqref{eq:g_d2}.
Observe that for $E$ small:
\begin{align*}
    (A+E)^{-1} &= (A(I+A^{-1}E))^{-1} = (I+A^{-1} E)^{-1} A^{-1} = \sum_{k=0}^{\infty} (- A^{-1} E)^{k} A^{-1} \\
    &= A^{-1} - A^{-1} E A^{-1} + A^{-1} E A^{-1} E A^{-1} + O(\norm{E}^3) \:.
\end{align*}
Therefore, letting $S = I - M^\T \otimes M^\T$,
\begin{align*}
    &(I - (M+\Delta_M)^\T \otimes (M+\Delta_M)^\T)^{-1} \vec(Q + \Delta_Q) \\
    &= S^{-1} \vec(Q + \Delta_Q) \\
    &\qquad+ S^{-1}( \Delta_M^\T \otimes M^\T + M^\T \otimes \Delta_M^\T + \Delta_M^\T \otimes \Delta_M^\T )S^{-1} \vec(Q+\Delta_Q) \\
    &\qquad+ S^{-1}( \Delta_M^\T \otimes M^\T + M^\T \otimes \Delta_M^\T )S^{-1}( \Delta_M^\T \otimes M^\T + M^\T \otimes \Delta_M^\T )S^{-1} \vec(Q+\Delta_Q) + O(\norm{\Delta}^3) \\
    &= S^{-1} \vec(Q) + S^{-1} \vec(\Delta_Q) \\
    &\qquad + S^{-1} (\Delta_M^\T \otimes M^\T + M^\T \otimes \Delta_M^\T) S^{-1} \vec(Q) + S^{-1}  (\Delta_M^\T \otimes \Delta_M^\T) S^{-1} \vec(Q) \\
    &\qquad + S^{-1} (\Delta_M^\T \otimes M^\T + M^\T \otimes \Delta_M^\T) S^{-1} \vec(\Delta_Q) \\
    &\qquad + S^{-1}( \Delta_M^\T \otimes M^\T + M^\T \otimes \Delta_M^\T )S^{-1}( \Delta_M^\T \otimes M^\T + M^\T \otimes \Delta_M^\T )S^{-1} \vec(Q) + O(\norm{\Delta}^3) \:.
\end{align*}

Now to finish the proof, we need to obtain a $\overline{\varepsilon}$
such that
\begin{align*}
  \max\{\norm{\Ah - T A_\star T^{-1}},\norm{\Bh - T B_\star},\norm{\Ch - C_\star T^{-1}},\norm{\Lh- T L_\star},\norm{\Kh - K_\star T^{-1}} \} \leq \overline{\varepsilon} \:.
\end{align*}
We do this by observing that:
\begin{align*}
	K_\star T^{-1} = \mathsf{LQR}(T \Astar T^{-1}, T \Bstar, T^{-\T} \Cstar^\T Q \Cstar T^{-1}, R) \:.
\end{align*}
Hence with $\Kh = \mathsf{LQR}(\Ah, \Bh, \Ch^\T Q \Ch, R)$, we can use the Riccati
perturbation 
between $\overline{P}_\star = \mathsf{dare}(T \Astar T^{-1}, T \Bstar, T^{-\T} \Cstar^\T Q \Cstar T^{-1}, R)$
and $\Ph = \mathsf{dare}(\Ah, \Bh, \Ch^\T Q \Ch, R)$.
We can bound, assuming that that $\varepsilon_C \leq \norm{\Cstar}$:
\begin{align*}
	\norm{ T^{-\T} \Cstar^\T Q \Cstar T^{-1} - \Ch^\T Q \Ch } \leq (2\norm{\Cstar} + \varepsilon_C)\norm{Q}\varepsilon_C \leq 3 \norm{\Cstar}\norm{Q}\varepsilon_C \:.
\end{align*}
Therefore by our hypothesis and the assumption that $f$ is monotonically increasing, 
\begin{align*}
	\norm{\Ph - \overline{P}_\star} \leq f(\max\{\varepsilon_A, \varepsilon_B, 3\norm{\Cstar}\norm{Q}\varepsilon_C \}) \leq f( 3 \norm{\Cstar}_+\norm{Q}_+ \max\{\varepsilon_A,\varepsilon_B,\varepsilon_C\}) \:.
\end{align*}

as long as $ 3 \norm{\Cstar}_+\norm{Q}_+ \max\{\varepsilon_A,\varepsilon_B,\varepsilon_C\}\leq \gamma_0$.
By Lemma~\ref{lem:K_perturb_bound}, this allows us to bound:
\begin{align*}
	\norm{\Kh - K_\star T^{-1}} &\leq \frac{7 \Gamma_\star^3}{\smin(R)} \max\{\varepsilon_A, \varepsilon_B, f( 3 \norm{\Cstar}_+\norm{Q}_+ \max\{\varepsilon_A,\varepsilon_B,\varepsilon_C\}) \} \\
	&\leq  \frac{7 \Gamma_\star^3}{\smin(R)} f( 3 \norm{\Cstar}_+\norm{Q}_+ \max\{\varepsilon_A,\varepsilon_B,\varepsilon_C\}) \:.
\end{align*}
We can set $\overline{\varepsilon} = \frac{7 \Gamma_\star^3}{\smin(R)} f( 3 \norm{\Cstar}_+\norm{Q}_+ \max\{\varepsilon_A,\varepsilon_B,\varepsilon_C\})$
because we assume that $f(\gamma) \geq \gamma$ and $\smin(R) \geq 1$.

We now complete the proof by observing that for
$M_\star = \bmattwo{\Astar}{\Bstar\Kstar}{\Lstar\Cstar}{\Astar+\Bstar\Kstar-\Lstar\Cstar}$ and
$N_\star = \bmattwo{\Astar+\Bstar\Kstar}{\Bstar\Kstar}{0}{\Astar - \Lstar\Cstar}$,
using $S$ defined in \eqref{eq:similarity_transform}, we have that
$N_\star = S^{-1} M_\star S$.
This identity has several consequences.
First, we have $\rho(N_\star) = \rho(M_\star)$.
Second, for any integer $k \geq 1$ we have
$S^{-1} M_\star^k S = N_\star^k $, and therefore
\begin{align*}
    \norm{N_\star^k} \leq \mathrm{cond}(S) \norm{M_\star^k} = \frac{3+\sqrt{5}}{2} \norm{M_\star^k} \:.
\end{align*}
A nearly identical argument shows that $\norm{M_\star^k} \leq \frac{3+\sqrt{5}}{2} \norm{N_\star^k}$.
Hence, $\tau(M_\star, \rho) \asymp \tau(N_\star, \rho)$ for any $\rho$.   
 
\end{document}